\documentclass[11pt]{amsart} \textwidth=14.5cm \oddsidemargin=1cm
\evensidemargin=1cm
\usepackage[rgb]{xcolor}
\usepackage{tikz}
\usepackage{verbatim}
\usepackage{amsmath}
\usepackage{amsxtra}
\usepackage{amscd}
\usepackage{amsthm}
\usepackage{amsfonts}
\usepackage{amssymb}
\usepackage{eucal}
\usetikzlibrary{arrows}

\usepackage{latexsym,todonotes}

\usepackage[linktocpage=true]{hyperref}
\hypersetup{colorlinks,linkcolor=blue,urlcolor=orange,citecolor=blue}
\usepackage[all, cmtip]{xy}

\usepackage{stmaryrd}
\usepackage{color} 


\newtheorem{thm}{Theorem} [section]
\newtheorem{cor}[thm]{Corollary}
\newtheorem{lem}[thm]{Lemma}
\newtheorem{prop}[thm]{Proposition}
\newtheorem{conj}[thm]{Conjecture}

\theoremstyle{definition}
\newtheorem{definition}[thm]{Definition}
\newtheorem{example}[thm]{Example}

\theoremstyle{remark}
\newtheorem{rem}[thm]{Remark}

\numberwithin{equation}{section}
\begin{document}

\newcommand{\thmref}[1]{Theorem~\ref{#1}}
\newcommand{\secref}[1]{Section~\ref{#1}}
\newcommand{\lemref}[1]{Lemma~\ref{#1}}
\newcommand{\propref}[1]{Proposition~\ref{#1}}
\newcommand{\corref}[1]{Corollary~\ref{#1}}
\newcommand{\remref}[1]{Remark~\ref{#1}}
\newcommand{\eqnref}[1]{(\ref{#1})}

\newcommand{\exref}[1]{Example~\ref{#1}}

\newtheorem{innercustomthm}{{\bf Theorem}}
\newenvironment{customthm}[1]
  {\renewcommand\theinnercustomthm{#1}\innercustomthm}
  {\endinnercustomthm}
  
  \newtheorem{innercustomcor}{{\bf Corollary}}
\newenvironment{customcor}[1]
  {\renewcommand\theinnercustomcor{#1}\innercustomcor}
  {\endinnercustomthm}
  
  \newtheorem{innercustomprop}{{\bf Proposition}}
\newenvironment{customprop}[1]
  {\renewcommand\theinnercustomprop{#1}\innercustomprop}
  {\endinnercustomthm}

\newcommand{\nc}{\newcommand}
 \nc{\A}{\mathcal A}
\nc{\Ainv}{\A^{\rm inv}}
\nc{\aA}{{}_\A}
\nc{\aAp}{{}_\A'}
\nc{\aff}{{}_\A\f}
\nc{\aL}{{}_\A L}
\nc{\aM}{{}_\A M}
\nc{\Bin}{B_i^{(n)}}
\nc{\dL}{{}^\omega L}
\nc{\Z}{{\mathbb Z}}
 \nc{\C}{{\mathbb C}}
 \nc{\N}{{\mathbb N}}
 \nc{\fZ}{{\mf Z}}
 \nc{\F}{{\mf F}}
 \nc{\Q}{\mathbb{Q}}
 \nc{\la}{\lambda}
 \nc{\ep}{\epsilon}
 \nc{\h}{\mathfrak h}
 \nc{\He}{\bold{H}}
 \nc{\htt}{\text{tr }}
 \nc{\n}{\mf n}
 \nc{\g}{{\mathfrak g}}
 \nc{\DG}{\widetilde{\mathfrak g}}
 \nc{\SG}{\breve{\mathfrak g}}
 \nc{\is}{{\mathbf i}}
 \nc{\V}{\mf V}
 \nc{\bi}{\bibitem}
 \nc{\E}{\mc E}
 \nc{\ba}{\tilde{\pa}}
 \nc{\half}{\frac{1}{2}}
 \nc{\hgt}{\text{ht}}
 \nc{\ka}{\kappa}
 \nc{\mc}{\mathcal}
 \nc{\mf}{\mathfrak} 
 \nc{\hf}{\frac{1}{2}}
\nc{\ov}{\overline}
\nc{\ul}{\underline}
\nc{\I}{\mathbb{I}}
\nc{\xx}{{\mf x}}
\nc{\id}{\text{id}}
\nc{\one}{\bold{1}}
\nc{\Qq}{\Q(q)}
\nc{\ua}{\mf{u}}
\nc{\nb}{u}
\nc{\inv}{\theta}
\nc{\mA}{\mathcal{A}}
\newcommand{\TT}{\mathbf T}
\newcommand{\TA}{{}_\A{\TT}}
\newcommand{\tK}{\widetilde{K}}

\nc{\U}{\bold{U}}
\nc{\Udot}{\dot{\U}}
\nc{\f}{\bold{f}}
\nc{\fprime}{\bold{'f}}
\nc{\B}{\bold{B}}
\nc{\Bdot}{\dot{\B}}
\nc{\Dupsilon}{\Upsilon^{\vartriangle}}
\newcommand{\T}{\texttt T}
\newcommand{\vs}{\varsigma}
\newcommand{\Pa}{{\bf{P}}}
\newcommand{\Padot}{\dot{\bf{P}}}

\nc{\ipsi}{\psi_{\imath}}
\nc{\Ui}{{\bold{U}^{\imath}}}
\nc{\Uidot}{\dot{\bold{U}}^{\imath}}
 \nc{\be}{e}
 \nc{\bff}{f}
 \nc{\bk}{k}
 \nc{\bt}{t}
 \nc{\BLambda}{{\Lambda_{\inv}}}
\nc{\Ktilde}{\widetilde{K}}
\nc{\bktilde}{\widetilde{k}}
\nc{\Yi}{Y^{w_0}}
\nc{\bunlambda}{\Lambda^\imath}
\newcommand{\Iwhite}{\I_{\circ}}
\nc{\ile}{\le_\imath}
\nc{\il}{<_{\imath}}

\newcommand{\ff}{B}

\nc{\qq}{(q_i^{-1}-q_i)}
\nc{\qqq}{(1-q_i^{-2})^{-1}}
\nc{\qqqj}{(1-q_j^{-2})^{-1}}

\nc{\etab}{\eta^{\bullet}}
\newcommand{\Iblack}{\I_{\bullet}}
\newcommand{\wb}{w_\bullet}
\newcommand{\UIblack}{\U_{\Iblack}}

\newcommand{\blue}[1]{{\color{blue}#1}}
\newcommand{\red}[1]{{\color{red}#1}}
\newcommand{\green}[1]{{\color{green}#1}}
\newcommand{\white}[1]{{\color{white}#1}}

\newcommand{\huanchentodo}{\todo[inline,color=orange!20, caption={}]}
\newcommand{\wtodo}{\todo[inline,color=green!20, caption={}]}

\newcommand{\dvd}[1]{t_{\odd}^{{(#1)}}}
\newcommand{\dvp}[1]{t_{\ev}^{{(#1)}}}
\newcommand{\ev}{\mathrm{ev}}
\newcommand{\odd}{\mathrm{odd}}

\title[Canonical bases arising from quantum symmetric pairs]
{Canonical bases arising from quantum \\
symmetric pairs of Kac-Moody type}
 
 \author[Huanchen Bao]{Huanchen Bao}
\address{Department of Mathematics, National University of Singapore, Singapore}
\email{huanchen@nus.edu.sg}

\author[Weiqiang Wang]{Weiqiang Wang}
\address{Department of Mathematics, University of Virginia, Charlottesville, VA 22904}
\email{ww9c@virginia.edu}

\begin{abstract}  
For quantum symmetric pairs $(\bold{U}, \bold{U}^\imath)$ of Kac-Moody type, we construct $\imath$canonical bases for the highest weight integrable $\bold U$-modules and their tensor products regarded as $\bold{U}^\imath$-modules, as well as an $\imath$canonical basis for the modified form of the $\imath$quantum group $\bold{U}^\imath$. A key new ingredient is a family of explicit elements called $\imath$divided powers, which are shown to generate the integral form of $\dot{\bold{U}}^\imath$. We prove a conjecture of Balagovic-Kolb, removing a major technical assumption in the theory of quantum symmetric pairs. Even for quantum symmetric pairs of finite type, our new approach simplifies and strengthens the integrality of quasi-K-matrix and the constructions of $\imath$canonical bases, by avoiding a case-by-case rank one analysis and removing the strong constraints on the parameters in a previous work. 
\end{abstract}

\maketitle

\let\thefootnote\relax\footnotetext{{\em 2020 Mathematics Subject Classification.} Primary 17B10.}
\let\thefootnote\relax\footnotetext{{\em Keywords:} quantum symmetric pairs, canonical bases, Kac-Moody type.}
\setcounter{tocdepth}{1}
\tableofcontents

\section{Introduction}

\subsection{Background}
\label{subsec:history}

Drinfeld-Jimbo quantum groups $\U =\U(\mathfrak g)$ are deformations of universal enveloping algebras of simple or symmetrizable Kac-Moody Lie algebras $\mathfrak g$, associated with a generalized Cartan matrix $(a_{ij})_{i\in \I}$. The theory of canonical bases for quantum groups has been developed by Lusztig and also by Kashiwara \cite{Lu90, Ka91, Lu91, Lu92, Lu94, Ka94}. It has applications to Kazhdan-Lusztig theory and geometric representation theory; it was also a major motivation for several exciting active research directions in the past decade including categorification and cluster algebras. 


The classification of real simple Lie algebras, or equivalently, the symmetric pairs $(\mathfrak g, \mathfrak g^\theta)$, was achieved by \'E. Cartan, and they are in bijection with (bicolored) Satake diagrams, $\I =\I_\circ \cup \Iblack$; cf. \cite{Ar62}. As a deformation of symmetric pairs, the quantum symmetric pair (QSP) $(\U, \Ui)$ was formulated by Letzter \cite{Le99, Le02} in finite type with the Satake diagrams as inputs; this construction has been generalized by Kolb \cite{Ko14} to the Kac-Moody setting. Here $\Ui$ is a coideal subalgebra of $\U$ and will be referred to as an $\imath$quantum group on its own. A subtle and deep feature of QSP is that $\Ui =\U^{\imath}_{\vs,\kappa}$ depends on a multiple of parameters $\vs =(\vs_i)_{i\in \I_\circ}$ and $\kappa =(\kappa_i)_{i\in \I_\circ}$ subject to some constraints; specializing at $q=1$ and $\vs_i=1$ allows one to recover the symmetric pairs. 

In recent years, it is getting increasingly clear that a number of fundamental constructions for quantum groups admit highly nontrivial generalizations to the setting of QSP. In \cite{BW18b} (also cf. \cite{BW18a}), the authors developed a theory of $\imath$canonical bases for the QSP $(\U, \Ui)$ {\em of finite type}, on $\Ui$-modules as well as a modified form $\Uidot$. The constructions of quasi-$K$-matrix and universal $K$-matrix for QSP \cite{BW18a, BK19} have played a significant role too. 
The $\imath$quantum group $\Ui$ of quasi-split type AIII and its $\imath$canonical basis have applications to super Kazhdan-Lusztig theory of type BCD \cite{BW18a, Bao17}, admit a geometric realization \cite{BKLW, LW18} and a KLR type categorification \cite{BSWW}. The $\imath$quiver varieties introduced more recently by Y.~Li \cite{Li19}  have intimate connections with QSPs.

\subsection{Obstacles}
Let us expand on some of the main obstacles toward canonical bases arising from QSP before the current work. Set $\mA =\Z[q,q^{-1}]$. 
Similar to the canonical bases for quantum groups, the $\imath$canonical bases on modules require 3 ingredients: 
\begin{enumerate}
\item
a bar involution $\ipsi$;
\item
an integral $\mA$-form; 
\item
a $\Z[q^{-1}]$-lattice (with some distinguished basis). 
\end{enumerate}

The bar involution on $\Ui$ in general was proposed in \cite{BW18a} and then constructed in great generality by Balagovic-Kolb in \cite{BK15} who also specified the constraints on the parameters $\vs$ and $\kappa$. On the other hand, the new bar involution $\ipsi$ at the level of $\U$-modules (regarded as $\Ui$-modules by restriction) requires the notion of quasi-$K$-matrix due to the authors \cite{BW18a}, which is a QSP generalization of Lusztig's quasi-$R$-matrix; the existence of quasi-$K$-matrix in much generality has been subsequently established by \cite{BK19}. 

These general constructions in \cite{BK15, BK19} largely rely on two crucial assumptions. The first (and somewhat mild) assumption is that the Cartan integers are bounded, i.e., $|a_{ij}| \le 3$. The construction of a bar involution on $\Ui$ relies on explicit Serre type presentations of $\Ui$, which are only available under the bound assumption on $(a_{ij})$ until very recently. 
In a recent work \cite{CLW18}, a Serre presentation was obtained for the quasi-split $\imath$quantum group $\Ui$ (i.e. associated to the Satake diagram with $\Iblack =\emptyset$) without any bound constraint on $(a_{ij})$, and the existence of the bar involution on such $\Ui$ follows. Note the QSP of finite and affine types are all covered. 

The second (and a more serious) assumption is the validity of a conjecture in \cite{BK15} that certain numbers $\nu_i \in \{1, -1\}$ for $i\in \I_\circ$ should always be $\nu_i=1$; for a precise formulation see \eqref{eq:Zi}--\eqref{eq:nu} and Conjecture~\ref{conj:BK}. It was shown in \cite{BK15} that $\nu_i=1$ always holds for $\U$ of finite type. For $\U$ of Kac-Moody type, the conjecture is only known for $\U$ of locally finite type (cf. \cite[\S5.4]{BW18b}), and in this case it follows from the counterpart in the finite type since the definition of $\nu_i$ is local. We shall refer to this BK conjecture as the {\em Fundamental Lemma of QSP}, since while this sounds like a mere technicality but several major constructions in great generality depend on it. For example, the quasi-$K$-matrix as well as universal $K$-matrix in \cite{BK19} (which was first constructed in \cite[\S2.5]{BW18a} for quasi-split QSP of type AIII) both rely on the validity of the Fundamental Lemma of QSP. 

The approach developed in \cite{BW18a} on the integrality issue for the quasi-$K$-matrix $\Upsilon$ and the modified form $\Uidot$ of finite type is rather tedious and technical as it is based on a case-by-case analysis in the 8 real rank one finite type cases. Such an approach toward the integrality is not generalizable to $\Ui$ of Kac-Moody type, as it is probably impossible to classify the real rank one $\imath$quantum groups of Kac-Moody type (there is already a zoo of real rank one $\imath$quantum groups of affine type; cf. \cite{RV16}). 

\subsection{Goal}

The goal of this paper is to develop a general theory of $\imath$canonical bases for the QSP of {\em Kac-Moody type}. We shall construct $\imath$canonical bases on integrable highest weight $\U$-modules and their tensor products. We shall also construct the $\imath$canonical basis on the modified form $\Uidot$. (Note that the canonical basis on $\Udot$ \cite{Lu92} can be viewed as the $\imath$canonical basis for the $\imath$quantum group for the QSP of diagonal type.) Our goal is achieved by building on the foundational works \cite{Lu94, BK19}, following closely several constructions in \cite{BW18a} when applicable (including a projective system of based $\Ui$-modules), and more crucially, introducing several new ideas to overcome the major obstacles as mentioned above.

Even for QSP of finite type, the results in this paper here strengthen the main results in \cite{BW18b} by allowing general integral parameters $\vs_i$ and simplifying the approach {\em loc. cit.} by substituting the tedious case-by-case real rank one analysis therein with a conceptual $\imath$divided powers construction. 

We further provide a proof of the Fundamental Lemma of QSP in full generality. This removes a major technical assumption in \cite{BK15, BK19} toward the existence of bar involution, quasi-$K$-matrix and universal $K$-matrix, and as well as for the constructions of $\imath$canonical bases in this paper.

\subsection{Fundamental Lemma of QSP}
The Fundamental Lemma of QSP is actually a statement for quantum groups, motivated by the study of quantum symmetric pairs. By \cite[Proposition~2.5]{BK15} $\sigma\tau (Z_i)= \nu_i Z_i$, where $\nu_i \in\{1, -1\}$, $Z_i$ is defined in \eqref{eq:Zi} and $\sigma$ is the anti-involution of $\U$ which fixes $E_i, F_i$ for all $i\in \I$, and $\tau$ is a diagram automorphism. We show that $Z_i$ descends to a certain cell quotient of $\Udot$ as a nonzero scalar multiple of a canonical basis element, and it follows that $\nu_i=1$ as the canonical basis is preserved by $\sigma$ and $\tau$. Our proof relies in an essential way on Lusztig's theory of based modules and cells on quantum groups \cite[Chapters~ 27, 29]{Lu94}. Some old results of Joseph and Letzter \cite{JL94, JL96} also play a role. 

\subsection{The $\imath$divided powers}

A key new construction in this paper is a family of explicit elements called $\imath$divided powers, for each $i \in \I$, which by definition satisfy the 3 properties: bar invariant, integral and having a leading term the standard divided powers embedded as elements in $\Udot$. The $\imath$divided powers associated to $i \in \Iblack$ are the standard divided powers $F_i^{(n)}, E_i^{(n)}$ as in \cite{Lu94}. The $\imath$divided powers associated to $i \in \I_\circ$ with $\tau i\neq i$  or to $i \in \I_\circ$ with $\wb i =i =\tau i$ were introduced and studied earlier in \cite{BW18a, BW18b, BeW18}. For the class of $i \in \I_\circ$ with $\wb i \neq i =\tau i$, our formula for the $\imath$divided powers, denoted by $B_{i,\zeta}^{(n)}$ for $n\ge 1$, is explicit and universal. (Note that 5 out of 8 $\imath$quantum groups of real rank one in finite type belong to this class; cf. \cite[\S3.2, Table~1]{BW18b}.)

The $\imath$divided powers $B^{(n)}_{i, \zeta}$ should be regarded as a leading term of a corresponding $\imath$canonical basis element. 
We eventually show that the $\A$-form ${}_\mA\Uidot$, which is defined in a more conceptual way, is actually generated by the $\imath$divided powers, and ${}_\mA\Uidot$ is a free $\A$-submodule of $\Uidot$ such that $\Uidot =\Q(q)\otimes_{\A} {}_\A\Uidot$.

\subsection{Integrality of $\Upsilon$ in action}

It is neither expected nor needed that the quasi-$K$-matrix for $\Ui$ beyond finite type is integral on its own, based on the knowledge from quasi-$R$-matrix for a quantum group $\U$ beyond finite type (cf. \cite{BW16}). 
With the help of the aforementioned $\imath$divided powers, we show that $\Upsilon$ preserves the integral $\A$-forms on integrable highest weight $\U$-modules and their tensor products. The proof is in part inspired by our argument in \cite{BW16} that Lusztig's quasi-$R$-matrix preserves the integral $\A$-forms of these $\U$-modules; actually the approach developed in \cite{BW16} toward canonical bases on tensor product modules can in turn be viewed as dealing with the special case of QSP of diagonal type. 

For QSP of finite type,  our new approach via the $\imath$divided powers allow us to establish the integrality of $\Upsilon$ for general integral parameters $\vs_i$ (in contrast to $\vs_i \in \pm q^\Z$ in \cite{BW18b}) and to bypass the tedious case-by-case real rank one analysis in \cite[Appendix~A.4--A.7]{BW18a}. (The complete detail takes 23 pages and can be found in Appendix~ A.4--A.11 in the arXiv version~1 of the paper.) 

Following \cite{BW18a, BW18b}, we define a new bar involution $\ipsi =\Upsilon \circ \psi$ on the based $\U$-modules such as integrable highest weight $\U$-modules and their tensor products. As $\ipsi$ preserves the integral $\A$-forms, we are able to construct the $\imath$canonical bases on these modules using their canonical bases from \cite{BW16} (cf. \cite[Part IV]{Lu94}). The $\imath$canonical basis spans the same $\Z[q^{-1}]$-lattices as the usual canonical basis --- this is the characterization property~(3) for $\imath$canonical basis in \S\ref{subsec:history}. 

We further extend the construction of based $\Ui$-modules to tensor products of a based $\Ui$-module with a based $\U$-module, using a construction of $\Theta^\imath$ in \cite{BW18a} and \cite{Ko20}; this was recently carried out for QSP of finite type in \cite{BWW20}. 

\subsection{$\imath$Canonical bases on based modules and $\Uidot$}

Note we have established the $\imath$canonical bases on modules for $\Ui$ with general parameters $\vs_i \in \A$ (in contrast to the very strict constraint that $\vs_i \in \pm q^\Z$ in \cite{BW18b}). Recall the condition on $\vs_i$ in \cite{BW18b} was imposed to ensure that an anti-involution $\wp$ on $\U$  (see Proposition~\ref{prop:invol}) restricts to an involution to the subalgebra $\Ui$ (see \cite[Proposition~4.6]{BW18b}). 

We make a crucial observation here that the general construction of $\imath$canonical basis on the modified form $\Uidot$ does not rely on the fact that $\wp$ preserves $\Ui$ (this was rather a mental block for us for quite some time). In all our constructions toward the $\imath$canonical basis on $\Uidot$, we only need to use the twist of $\wp$ on $\U$-modules. Recall that any $\U$-module is automatically a $\Ui$-module by restriction. 

The construction for the canonical basis on the modified $\imath$quantum group $\Uidot$ relies crucially on the based $\U$-submodules $L(w\lambda, \mu) \subset L(\lambda) \otimes L(\mu)$, for a Weyl group element $w$; cf. \eqref{eq:Lwlamu}. 
In finite type, we proved that $L(w\lambda, \mu)$ is a based module using an identification $L(\lambda) \otimes L(\mu) \cong  {}^\omega L( -w_0\lambda) \otimes L(\mu)$, where $w_0$ is the longest element in the Weyl group. 
Instead, Kashiwara's theory of extreme weight modules \cite{Ka94} is used to prove that $L(w\lambda, \mu)$ is a based submodule of $L(\lambda) \otimes L(\mu)$ over $\U$ of Kac-Moody type; see Theorem~\ref{thm:based-Li}. 	
	
With all these preparations and observations, we follow \cite{BW18a} closely to build a projective system of based $\Ui$-modules (which generalizes Lusztig's construction \cite[Part IV]{Lu94}), and establish the $\imath$canonical basis of $\Uidot$ (and of ${}_\A\Uidot$). 

For QSP of affine type AIII, a geometric realization of $\Uidot$ and its $\imath$canonical bases was first given in \cite{FL+20}. 
\subsection{Applications} 

For the existence of bar involution, we shall make the basic assumption that $|a_{ij}| \le 3$ or $\Iblack =\emptyset$ throughout the paper. 
It seems possible that the new $\imath$divided powers introduced in this paper might help to obtain a Serre presentation for $\Ui$ (and then a bar involution) by weakening or removing this bound assumption on the generalized Cartan matrices in the long run. This is exactly how the (old) $\imath$divided powers helped solving this very problem for the quasi-split  $\imath$quantum groups $\Ui$ in \cite{CLW18}. 

The $\imath$divided powers are expected to play a key role in computational aspects of $\imath$canonical bases and related combinatorics. 
They will help to shed new light on the categorification and geometric realization of QSP, and in addition have applications in the study of quantum symmetric pairs at roots of 1. 

In \cite{RV16}, several crucial constructions for QSP were carried over for a more general class of quantum algebras associated to generalized Satake diagrams. It is interesting to explore if the technique of $\imath$divided powers introduced in this paper allows a possible further generalization of integral forms and $\imath$canonical basis in this generalized QSP setting.

\subsection{The organization} 

The paper is organized as follows. Sections~\ref{sec:lem}-\ref{sec:iCBM} contain the main new ideas of this paper.

In Section~\ref{sec:QG}, we review and set up notations for quantum groups and canonical basis. The new result in this section is the existence of a family of based $\U$-modules $L(w\la, \mu)$. 

In Section~\ref{sec:Ui}, we set up notations for quantum symmetric pairs $(\U, \Ui)$. We recall some earlier work \cite{BK19} and summarize various results in \cite{BW18b} which remain valid in the Kac-Moody setting. The existence of an anti-involution $\sigma_\imath$ on $\Ui$ (analogous to an anti-involution $\sigma$ on $\U$) is new. We assume in Section~\ref{sec:Ui} the validity of the Fundamental Lemma of QSP, so the results in this section work in great generality. 
The Fundamental Lemma of QSP, which is a conjecture of Balagovic and Kolb, is then proved in Section~\ref{sec:lem}. 

In Section~\ref{sec:iDP}, we define explicitly and study in depth the $\imath$divided powers in 3 separate classes, including a major new class of $i\in \I_\circ$ with $\wb i \neq i=\tau i$. We prove that the $\imath$divided powers are integral, bar invariant with suitable leading term;  we shall see in \S\ref{sec:iCBU} that they generate the $\A$-form ${}_\A \Uidot$.

In Section~\ref{sec:iCBM}, we show that the quasi-$K$-matrix $\Upsilon$ preserves the $\A$-forms of highest weight integrable $\U$-modules and their tensor products, and then construct $\imath$canonical bases on these modules. We further construct $\imath$canonical bases on a tensor product of a based $\Ui$-module and a based $\U$-module. 

In Section~\ref{sec:iCBU}, we establish the $\imath$canonical basis on $\Uidot$, and show that the $\A$-subalgebra ${}_\A \Uidot$ of $\Uidot$ defined in Definition~\ref{def:mAUidot} is indeed a free $\A$-module such that $\Q(q) \otimes_{\A}  {}_\A \Uidot = \Uidot$. This is essentially a summary of \cite[\S6]{BW18b}, now in the Kac-Moody setting, based on the results in previous sections. A  different (and more elementary) partial order inspired by \cite{BWW20} is used here.

\subsection*{Acknowledgments.}
This project cannot be completed without generous helps from many people over the years. We are indebted to Masaki Kashiwara for supplying a proof of Theorem~\ref{thm:based-Li}. We thank Gail Letzter for her expertise and help regarding Lemma~\ref{lem:nonzero}, and thank Stefan Kolb for helpful clarifications of his work with Balagovic. We thank Hideya Watanabe for enlightening discussions regarding the modified $\imath$quantum groups. 
We thank Collin Berman (supported by REU in WW's NSF grant) for various computer explorations on $\imath$divided powers, and Thomas Sale for a verification of a crucial example. We would also like to thanks an anonymous referee for helpful comments.

HB is supported by NUS-MOE grant R-146-000-294-133 and R-146-001-294-133. WW is partially supported by NSF grant DMS-1702254 and DMS-2001351.

\section{Quantum groups and canonical bases}
  \label{sec:QG}

In this section, we review some basic constructions and set up notations in quantum groups. We then establish a family of based $\U$-modules $L(w\la, \mu)$.

\subsection{}
Let $q$ be an indeterminate. Consider a free $\Qq$-algebra $'\f$ generated by $\theta_{i}$ for ${i \in \I}$ associated with the Cartan datum of type $(\I, \cdot)$. As a $\Qq$-vector space, $'\f$ has a weight space decomposition as $'\f = \bigoplus_{\mu \in {\N}[\I]}~ '\f_{\mu},$ where $\theta_{i}$ has weight $i$ for all $i \in \I$. For $\mu =\sum_{i\in\I} a_i i$, the height of $\mu$ is denoted by $\hgt (\mu) =\sum_{i\in \I} a_i$.
For any $x \in \fprime_\mu$, we set $|x| = \mu$. For any $i \in \I$, we set $q_{i} = q^{\frac{i \cdot i}{2}}$. Let $W$ be the corresponding Weyl group generated by simple reflections $s_i$ for $i \in \I$.

For each $i \in \I$, we define $r_{i}, {}_i r: \fprime \rightarrow \fprime$ to be the unique $\Q(q)$-linear maps  such that 
\begin{align}  \label{eq:rr}
\begin{split}
r_{i}(1) = 0, \quad r_{i}(\theta_{j}) = \delta_{ij},
\quad r_{i}(xx') = xr_{i}(x') + q^{i \cdot \mu'}r_{i}(x)x',
 \\
{}_{i}r(1) = 0, \quad {}_{i}r(\theta_{j}) = \delta_{ij},
\quad {}_{i}r(xx') =q^{i \cdot \mu }x \,_{i}r(x') +{ _{i}r(x)x'},
\end{split}
\end{align}
for all $x \in \fprime_{\mu}$ and $x' \in \fprime_{\mu'}$. 

%

Let $(\cdot, \cdot)$ be the symmetric bilinear form on $\fprime$ defined in  \cite[1.2.3]{Lu94}.
Let ${\bf I}$ be the radical of the symmetric bilinear form $(\cdot,\cdot)$ on $\fprime$.
 For $i\in \I, n \in \Z$ and $s \in \N$, we define 
\[
[n]_i = \frac{q_i^n-q_i^{-n}}{q_i - q^{-1}_i} \quad  \text { and } \quad [s]^!_i = \prod^s_{j=1} [j]_i.
\] 
We shall also use the notation 
\[
\begin{bmatrix}
n\\
s
\end{bmatrix}_i
=
\frac{[n]^!_i}{[s]^!_i [n-s]^!_i}, \quad \text{ for } 0 \le s \le n.
\]
It is known \cite{Lu94} that ${\bf I}$ is 
generated by the quantum Serre relators $S(\theta_i, \theta_j)$, for $ i \neq  j \in \I$, where
\begin{equation}  \label{eq:Sij}
S(\theta_i, \theta_j) =  \sum^{1-a_{ij}}_{s=0}(-1)^{s} \begin{bmatrix}1-a_{ij}\\s \end{bmatrix}_i\theta_{i}^s \theta_{j} \theta_{i}^{1-a_{ij}-s}. 
\end{equation}
Let $\f =\fprime/\bf{I}$.
We introduce the divided power $\theta^{(a)}_{i} = \theta^a_{i}/[a]_i^!$ for  $a \ge 0$. Let 
\[
\mA =\Z[q,q^{-1}].
\]
Let $_\mA\f$ be the $\mA$-subalgebra of $\f$ 
generated by $\theta^{(a)}_{i}$ for various $a \ge 0$ and $i \in \I$.

\subsection{}
Let $(Y,X, \langle\cdot,\cdot\rangle, \cdots)$ be a root datum of type $(\I, \cdot)$; cf. \cite{Lu94}. We define a partial order $\leq$ on the weight lattice $X$ as follows: for $\la, \la' \in X$, 
 \begin{equation}
   \label{eq:leq}
  \lambda \le \lambda' \text{ if and only if } \lambda' -\lambda \in \N[\I]. 
 \end{equation}
 
 %
The quantum group $\U$ associated with this root datum  $(Y,X, \langle\cdot,\cdot\rangle, \cdots)$ is the associative $\Qq$-algebra generated by $E_{i}$, $F_{i}$ for $i \in \I$ and $K_{\mu}$ for $\mu \in Y$, subject to the following relations: for all $\mu, \mu' \in Y$ and $i \neq j \in \I$,
\begin{align*}
K_{0} =1, \qquad &K_{\mu} K_{\mu'} = K_{\mu +\mu'},  
  \\
 K_{\mu} E_{j} = q^{\langle \mu, j' \rangle} E_{j} K_{\mu}, &  \qquad 
 K_{\mu} F_{j} = q^{-\langle \mu, j' \rangle} F_{j} K_{\mu}  , \\
 E_{i} F_{j} -F_{j} E_{i} &= \delta_{i,j} \frac{\Ktilde_{i} -\Ktilde_{-i}}{q_i-q_i^{-1}}, \\
S(F_{i}, F_{j}) &= S(E_{i}, E_{j}) = 0,  
\end{align*}
where $\Ktilde_{\pm i} = K_{\pm \frac{i \cdot i}{2} i}$ and $S(\cdot, \cdot)$ is defined as \eqref{eq:Sij}.

Let $\U^+$, $\U^0$ and $\U^-$ be the $\Qq$-subalgebra of $\U$ generated by $E_{i} (i \in \I)$, $K_{\mu} (\mu \in Y)$, 
and $F_{i}  (i \in \I)$  respectively.
We identify $\f \cong \U^-$ by matching the 
generators $\theta_{i}$ with $F_{i}$. This identification induces
a bilinear form $(\cdot, \cdot)$ on $\U^{-}$ and $\Qq$-linear maps $r_i, {}_i r$ $(i\in \I)$ on $\U^-$.
Under this identification, we let $\U_{-\mu}^-$ be the image of $\f_\mu$.
Similarly we have $\f \cong \U^+$ by identifying $\theta_{i}$ with $E_{i}$. 
We let $_\mA\U^-$ (respectively, $_\mA\U^+$) denote the image of $_\mA\f$ under this isomorphism, 
which is generated by all divided powers $F^{(a)}_{i} =F_{i}^a/[a]_i^!$ (respectively, $E^{(a)}_{i} =E_{i}^a/[a]_i^!$). The coproduct $\Delta: \U \rightarrow \U \otimes \U$ is defined as follows:
\begin{equation}\label{eq:Delta}
\Delta(E_i)  = E_i \otimes 1 + \widetilde{K}_i \otimes E_i, \quad \Delta(F_i) = 1 \otimes F_i + F_i \otimes \widetilde{K}_{-i}, \quad \Delta(K_{\mu}) = K_{\mu} \otimes K_{\mu}.
\end{equation}

\subsection{} 

We recall several symmetries of $\U$; cf. \cite{Lu94}.
\begin{prop} \label{prop:invol}
{\quad}
\begin{enumerate}
\item There is an  involution $\omega$ of the $\Qq$-algebra $\U$ such that $\omega(E_{i}) =F_{i}$, $\omega(F_{i}) =E_{i}$,  and $\omega(K_{\mu}) = K_{-\mu}$ for all $i \in \I$ and $\mu \in Y$.

\item There is an anti-involution $\wp$ of the $\Qq$-algebra $\U$ such that 
$\wp(E_{i}) = q^{-1}_i F_{i} \Ktilde_{i}$, $\wp(F_{i}) = q^{-1}_i E_{i} \Ktilde_i^{-1}$  and $\wp(K_{\mu}) = K_{\mu}$ for all $i \in \I$ and $\mu \in Y$.

\item	There is an anti-involution $\sigma$ of the $\Qq$-algebra $\U$ such that 
$\sigma(E_{i}) = E_{i}$, $\sigma(F_{i}) = F_{i}$  and $\sigma(K_{\mu}) = K_{-\mu}$ for all $i \in \I$ and $\mu \in Y$.

\item There is a bar involution $\overline{\phantom{x}}$ of the $\Q$-algebra $\U$ such that 
$q \mapsto q^{-1}$, $\ov{E}_{i}= E_{i}$, $\ov{F}_{i}=F_{i}$,  and $\ov{K}_{\mu}=K_{-\mu}$ for all $i \in \I$ and $\mu \in Y$.
(Sometimes we denote the bar involution on $\U$ by $\psi$.)

\item There are automorphisms $\T''_{i, -e}$ (for $e=\pm 1$, $i\in I$) of the $\Qq$-algebra $\U$ such that 
\begin{equation*}
\begin{split}
&\T''_{i, -e} (E_i) = -F_i \widetilde{K}_{-ei} , \quad \T''_{i, -e} (F_i) = -  \widetilde{K}_{ei}E_i,  \quad \T''_{i, -e} (K_\mu) = K_{s_i(\mu)}; \\
&\T''_{i, -e} (E_j) =  \sum_{r+s = - \langle i, j'\rangle} (-1)^r q^{er}_{i} E^{(s)}_i E_j E^{(r)}_i \quad \text{ for } j\neq i. 
\end{split}
\end{equation*}
\end{enumerate}
\end{prop}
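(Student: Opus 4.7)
The plan is straightforward: for each of (1)--(5), define the claimed map on the generators $E_i, F_i, K_\mu$, verify that every defining relation of $\U$ (the toral relations, the $K$-$E$ and $K$-$F$ commutations, the $[E_i,F_j]$ relation, and the quantum Serre relations $S(E_i,E_j)=S(F_i,F_j)=0$) is respected, and conclude by the universal property that the map extends uniquely to a $\Q(q)$-algebra (anti-)homomorphism. The involution property is then checked by evaluating the composition on generators alone. All of this is classical \cite{Lu94}, so the role of the proof is primarily bookkeeping.

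For parts (1), (3), (4), the verification is immediate. On Serre relators, $\omega$ interchanges $S(E_i,E_j)$ with $S(F_i,F_j)$; the anti-involution $\sigma$ reverses the order of factors and so sends the degree-$(1-a_{ij})$ Serre element to itself (after re-indexing the summation via $s \mapsto 1-a_{ij}-s$ and using the symmetry of the $q$-binomial coefficient); similarly for the bar map $\psi$, where the sign $q_i \to q_i^{-1}$ is absorbed by the symmetry of $[s]_i^!$ and the $q$-binomials. The $[E_i,F_j]$ relation is checked in each case by matching the overall sign flip coming from the anti-homomorphism or the $E/F$ swap against the sign flip $\widetilde{K}_i - \widetilde{K}_{-i} \mapsto \widetilde{K}_{-i} - \widetilde{K}_i$ produced on the right-hand side.

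For part (2), the anti-involution $\wp$ requires an additional step because the generators get rescaled and conjugated: one has to verify that after substituting $\wp(E_i) = q_i^{-1} F_i \widetilde{K}_i$, $\wp(F_i) = q_i^{-1} E_i \widetilde{K}_i^{-1}$, $\wp(K_\mu) = K_\mu$, and reversing products, the $\widetilde{K}_i$'s produced by pushing $\widetilde{K}_i$ past $E_i$ or $F_i$ combine with the scalar $q_i^{-1}$ factors to reproduce the original relation. The involution property $\wp^2 = \mathrm{id}$ is then the short calculation $\wp^2(E_i) = q_i^{-1} \wp(\widetilde{K}_i)\wp(F_i) = q_i^{-2}\widetilde{K}_i E_i \widetilde{K}_i^{-1} = E_i$.

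The main obstacle is part (5). The Lusztig symmetries $\T''_{i,-e}$ are defined by a nontrivial formula on $E_j$ involving a sum over $r+s = -\langle i, j'\rangle$, and checking that they preserve the Serre relations requires the rank-two identities for $\mathrm{SL}_3$, $\mathrm{Sp}_4$ and $G_2$ type pairs $(i,j)$, plus the fact that $[\T''_{i,-e}(E_i), \T''_{i,-e}(F_j)]$ reproduces the commutator relation. Rather than redo this lengthy verification, I would cite Lusztig's construction directly: the formulas above agree with $\T''_{i,-e}$ as in \cite[\S37.1.3]{Lu94}, where the automorphism property is established in full generality (including the symmetrizable Kac-Moody case).
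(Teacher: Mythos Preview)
Your proposal is correct and in fact more detailed than the paper's own treatment: the paper gives no proof at all, stating the proposition as a recall of standard facts with the citation ``cf.\ \cite{Lu94}''. Your plan of verifying relations on generators for (1)--(4) and deferring the Serre-relation check for the Lusztig symmetries in (5) to \cite[\S37.1.3]{Lu94} is exactly in the spirit of that citation, so there is nothing to compare.
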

Since $\T''_{i, +1}$ satisfies the braid group relation, we can define the automorphism $\T''_{w, +1}$ of $\U$, associated to $w\in W$, in a standard fashion. To simplify the notation, throughout the paper we shall often write 
\[
\T_i = \T''_{i, +1}, \quad \text{ and } \quad \T_w = \T''_{w, +1}, \text{ for } w  \in W.
\]

\subsection{}
Let $M(\lambda)$ be the Verma module of $\U$ with highest weight $\lambda\in X$ and
with a highest weight vector denoted by $\eta_{\lambda}$.  
We define a lowest weight $\U$-module $^\omega M(\lambda)$, which 
has the same underlying vector space as $M(\lambda)$ but 
with the action twisted by the involution $\omega$ given in Proposition~\ref{prop:invol}.
When considering $\eta_{\lambda}$ as a vector in $^\omega M(\lambda)$, 
we shall denote it by $\xi_{-\lambda}$. 

Let 
$$X^+ = \big\{\lambda \in X \mid \langle i, \lambda \rangle \in {\N}, \forall i \in \I \big \}$$ 
 be the set of dominant integral weights. 
 By $\la \gg 0$ we shall mean that the integers $\langle i, \lambda \rangle$ for all $i$ are sufficiently large. The Verma module $M(\lambda)$ associated to $\la \in X$ has a unique simple quotient $\U$-module, denoted by $L(\lambda)$. We shall abuse the notation and denote by $\eta_\lambda \in L(\lambda)$ the image of the highest weight vector $\eta_\lambda \in M(\lambda)$. Similarly we define the $\U$-module $^\omega L(\lambda)$ of lowest weight $-\la$ with lowest weight vector $\xi_{-\la}$.  
 For $\la \in X^+$, we let $_\mA L(\lambda) ={_\mA\U^-}  \eta_\la$ and $^\omega _\mA L(\lambda) ={_\mA\U^+ } \xi_{-\la}$ be the $\mA$-submodules of $L(\lambda)$ and $^\omega L(\lambda) $, respectively. 

%
There is a canonical basis $\B$ on $\f$, a canonical basis $\{b^+|b\in \B\}$ on $\U^+$, and a canonical basis $\{b^-|b\in \B\}$ on $\U^-$. 
For each $\la\in X^+$, there is a subset $\B(\la)$ of $\B$ so that $\{b^- \eta_\la |b\in \B(\la)\}$ (respectively, $\{b^+\xi_{-\la} |b\in \B(\la)\}$)  forms a canonical basis of $L(\la)$ (respectively, $\dL(\la)$).  For any Weyl group element $w\in W$, let $\eta_{w\la}$ denote the unique canonical basis element of weight $w\la$. 

Let $\Udot =\oplus_{\zeta \in X} \Udot \one_\zeta$ be the idempotented quantum group and $\aA \Udot$ be its $\A$-form. Then $\Udot$ admits a canonical  basis 
$\dot{\B}  = \{ b_1 \diamondsuit_{\zeta} b_2   \vert (b_1, b_2) \in \B \times \B, \zeta \in X \}$; cf. \cite[Part ~IV]{Lu94}.

\subsection{}
 \label{subsec:P}
For any $\Iblack \subset \I$, let $\U_{\Iblack}$ be the $\Qq$-subalgebra of $\U$ generated by $F_{i} (i \in \Iblack)$, $E_{i} (i \in \Iblack)$ and $K_{i} ( i \in \Iblack)$.  
Let $\B_{\I_\bullet}$ be the canonical basis of $\f_{\Iblack}$ (here $\f_{\Iblack}$ is simply a version of $\f$ associated to $\Iblack$), which induces canonical bases on  $\U^{-}_{\I_\bullet}$ and $\U^{+}_{\I_\bullet}$. Let $\Pa= \Pa_{\I_\bullet}$ be the $\Qq$-subalgebra of $\U$ generated by $\U_{\Iblack}$ and $\U^{-}$. We denote by $L_{\Iblack}(\la)$ the simple $\U_{\Iblack}$-module of highest weight $\la$.

We introduce the following subalgebra of $\Udot$:
$$\Padot = \bigoplus_{\lambda \in X} \Pa \one_{\lambda}.
$$
We further set ${}_\mA \Padot = \Padot \cap {}_\mA \Udot$.
%
%

\subsection{Based submodules $L(w\la, \mu)$}

Recall the theory of based $\U$-modules of Lusztig \cite[Chapter 27]{Lu94} for $\U$ of finite type, which was extended by the authors in \cite{BW16} for $\U$ of Kac-Moody type. For $\lambda, \mu \in X^+$ and $w\in W$, we introduce the following $\U$-submodule:
\begin{align}
  \label{eq:Lwlamu}
L(w\lambda, \mu) = \U (\eta_{w\lambda} \otimes \eta_{\mu}) \subset L(\lambda) \otimes L(\mu).
\end{align}


The following theorem is the main result of this section, whose proof was kindly communicated to us by Kashiwara. 

\begin{thm}
 \label{thm:based-Li}
Let $\lambda, \mu \in X^+$, and $w\in W$. Then the $\U$-submodule $L(w\lambda, \mu)$   
is a based $\U$-submodule of $L(\lambda) \otimes L(\mu)$. 
\end{thm}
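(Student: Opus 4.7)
The plan is to invoke Kashiwara's theory of extremal weight modules and global crystal bases. Recall that for each $\nu \in X$ there is an integrable $\U$-module $V(\nu)$ equipped with a global crystal basis, that $V(\nu) \cong V(w\nu)$ as based $\U$-modules for every $w \in W$, and that $V(\lambda) = L(\lambda)$ when $\lambda \in X^+$. Under the based isomorphism $V(\lambda) \cong V(w\lambda)$, the distinguished generator $u_\lambda = \eta_\lambda$ corresponds to an extremal vector $u_{w\lambda} \in V(w\lambda)$, which is identified with $\eta_{w\lambda} \in L(\lambda)$.

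The first step will be to rewrite $L(\lambda) \otimes L(\mu)$ as $V(w\lambda) \otimes V(\mu)$ by tensoring the based isomorphism $L(\lambda) = V(\lambda) \cong V(w\lambda)$ with the identity on $L(\mu) = V(\mu)$; this is a based $\U$-module isomorphism carrying $u_{w\lambda} \otimes u_\mu$ to $\eta_{w\lambda} \otimes \eta_\mu$. Consequently $L(w\lambda, \mu)$ corresponds to the cyclic $\U$-submodule $\U \cdot (u_{w\lambda} \otimes u_\mu) \subset V(w\lambda) \otimes V(\mu)$, and it suffices to prove that this cyclic submodule is based.

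For the latter, the plan is to invoke Kashiwara's tensor product theorem for modules with global crystal bases, which endows $V(w\lambda) \otimes V(\mu)$ with a global basis whose underlying crystal is the tensor product of crystals. The cyclic submodule $\U \cdot (u_{w\lambda} \otimes u_\mu)$ should correspond to the span of those canonical basis elements of $V(w\lambda) \otimes V(\mu)$ indexed by the subcrystal of the tensor product crystal generated by $u_{w\lambda} \otimes u_\mu$ under the Kashiwara operators, and one verifies that this span is $\U$-stable.

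The main obstacle is precisely to establish this correspondence. Although $u_{w\lambda}$ is an extremal vector of $V(w\lambda)$, its tensor with $u_\mu$ need not be extremal in $V(w\lambda) \otimes V(\mu)$, as one already sees in rank-one examples: for $\mathfrak{sl}_2$ with $\lambda = \mu = \omega$ and $w = s_1$, the vector $\eta_{s_1\omega} \otimes \eta_\omega$ has weight zero but is killed by neither $\widetilde{E}_1$ nor $\widetilde{F}_1$. Hence a direct appeal to Kashiwara's embedding theorem for extremal vectors (which would furnish a based morphism from an extremal weight module realizing $L(w\lambda, \mu)$) is unavailable, and the identification of the cyclic submodule with a based subspace must instead rest on Kashiwara's finer machinery for tensor products of extremal weight modules in the Kac-Moody setting — and this is where Kashiwara's expertise will be essential.
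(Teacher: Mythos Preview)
Your proposal identifies the right toolkit (Kashiwara's extremal weight modules) but, as you yourself concede in the final paragraph, it is not a proof: the crucial step --- showing that the cyclic $\U$-submodule generated by $u_{w\lambda}\otimes u_\mu$ coincides with the span of the global basis elements lying in the subcrystal it generates --- is left to ``Kashiwara's finer machinery'' and ``Kashiwara's expertise.'' That is precisely the content of the theorem, so the argument has a genuine gap rather than a mere omission of routine details. Your observation that $u_{w\lambda}\otimes u_\mu$ is typically not extremal is correct and is exactly why your direct approach stalls.

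The missing idea, which the paper supplies, is to avoid analyzing the cyclic submodule inside $V(w\lambda)\otimes V(\mu)$ altogether and instead realize $L(w\lambda,\mu)$ as the \emph{image} of a based $\U$-module homomorphism whose source already has an obvious cyclic generator. Concretely: write $w\lambda=\lambda_1-\nu$ with $\lambda_1,\nu\in X^+$. Kashiwara's results on extremal weight modules give a based morphism $\phi:{}^\omega L(\nu)\otimes L(\lambda_1)\to L(\lambda)$ sending $\xi_{-\nu}\otimes\eta_{\lambda_1}\mapsto\eta_{w\lambda}$, and separately one has the based morphism $\chi:L(\lambda_1+\mu)\to L(\lambda_1)\otimes L(\mu)$ sending $\eta_{\lambda_1+\mu}\mapsto\eta_{\lambda_1}\otimes\eta_\mu$. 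Tensoring with identities and composing gives a based morphism
\[
{}^\omega L(\nu)\otimes L(\lambda_1+\mu)\;\longrightarrow\; L(\lambda)\otimes L(\mu),
\qquad \xi_{-\nu}\otimes\eta_{\lambda_1+\mu}\;\longmapsto\;\eta_{w\lambda}\otimes\eta_\mu.
\]
Since $\xi_{-\nu}\otimes\eta_{\lambda_1+\mu}$ is cyclic in the source (a tensor of a lowest weight module with a highest weight module), the image is exactly $L(w\lambda,\mu)$, and images of based morphisms are based submodules. The decomposition $w\lambda=\lambda_1-\nu$ is the key maneuver you are missing; it converts the awkward ``middle-weight'' generator $\eta_{w\lambda}\otimes\eta_\mu$ into a genuine lowest-times-highest generator upstairs.
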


\begin{proof} 
Write $w\la =\la_1 -\nu$, for some $\la_1, \nu\in X^+$.
Thanks to \cite[Theorem 2.9, Proposition 2.11]{BW16}, $L(\la_1) \otimes L(\mu)$ is a based $\U$-module, and the map $\chi: L(\la_1 +\mu)\rightarrow L(\la_1) \otimes L(\mu)$, which sends $\eta_{\la_1 +\mu} \mapsto \eta_{\la_1} \otimes \eta_{\mu}$, is a based $\U$-module homomorphism. Therefore,
\[
\chi': =\text{id}_{\;\dL(\nu)} \otimes\chi: \dL(\nu) \otimes L(\la_1 +\mu) \longrightarrow \dL(\nu) \otimes L(\la_1) \otimes L(\mu),  
\]
which sends $\xi_\nu \otimes \eta_{\la_1 +\mu} \mapsto \xi_\nu \otimes \eta_{\la_1} \otimes \eta_{\mu}$, is a based module homomorphism. 

Recall the notion of extremal weight modules \cite[\S8]{Ka94}, and in particular the extremal weight module $L(w\la)$ coincides with $L(\la)$, thanks to $\la \in X^+$. 

It follows from \cite[Proposition~23.3.6]{Lu94} that there exists a $\U$-module homomorphism $\phi: \dL(\nu) \otimes L(\la_1) \rightarrow L(w\la)=L(\la)$, which sends $\xi_\nu \otimes \eta_{\la_1} \mapsto \eta_{w\la}$. By Kashiwara \cite[Lemma~8.2.1]{Ka94}, the map $\phi: \dL(\nu) \otimes L(\la_1) \rightarrow L(w\la)=L(\la)$ is a based module homomorphism. 
Thus, 
\[
\phi' :=\phi \otimes \text{id}_{L(\mu)}: \dL(\nu) \otimes L(\la_1)\otimes L(\mu) \longrightarrow L(\la)\otimes L(\mu), 
\]
which sends $\xi_\nu \otimes \eta_{\la_1} \otimes x \mapsto \eta_{w\la}\otimes x$, for $x\in L(\mu)$, is a based $\U$-module homomorphism. 

Therefore, the composition homomorphism 
\[
\phi' \chi': \dL(\nu) \otimes L(\la_1 +\mu) \longrightarrow L(\la)\otimes L(\mu), 
\]
which sends $\xi_\nu \otimes \eta_{\la_1 +\mu} \mapsto \eta_{w\lambda} \otimes \eta_{\mu}$, is a based $\U$-module homomorphism. 
Since $\xi_\nu \otimes \eta_{\la_1 +\mu}$ is a cyclic vector (i.e., it generates the $\U$-module $\dL(\nu) \otimes L(\la_1 +\mu))$, the $\U$-module $L(w\lambda, \mu)$  is the image of the based module homomorphism $\phi' \chi'$. Hence $L(w\lambda, \mu)$  is a based $\U$-submodule of $L(\la)\otimes L(\mu)$. 
\end{proof}

\begin{rem}
Theorem~\ref{thm:based-Li} for $\U$ of finite type appeared as \cite[Theorem ~2.6]{BW18b} with a different proof.
\end{rem}

\section{The $\imath$quantum groups $\Ui$}
  \label{sec:Ui}
  
In this section, we review the basic definitions and constructions of quantum symmetric pairs, including the bar involution and quasi-$K$-matrix. We also formulate various (old and new) symmetries on $\Ui$ and $\Ui$-modules. Our general setup assumes the validity of the Fundamental Lemma of QSP, which is to be established in Section~\ref{sec:lem}.  

\subsection{}
  \label{subsec:adm}
  
 Let $\tau$ be an involution of the Cartan datum $(\I, \cdot)$; we allow $\tau =\id$. We further assume that $\tau$ extends to  an involution on $X$ and an involution on $Y$, respectively, such that the perfect bilinear pairing is invariant under the involution $\tau$. For any $\lambda \in X$ (or $Y$), we shall write $\lambda^\tau = \tau(\lambda)$.

From now on, we ssume $\I_{\bullet} \subset \I$ is a Cartan subdatum of {\em finite type}. Let $W_{\I_\bullet}$ be the parabolic subgroup of $W$ with $w_{\bullet}$ as  its longest element. 
Let $\rho^\vee_{\bullet}$ be the half sum of all positive coroots in the set $R^{\vee}_{\bullet}$, 
and let $\rho_{\bullet}$ be the half sum of all positive coroots in the set $R _{\bullet}$. 
We shall write 
\begin{equation}
\label{eq:white}
 \I_{\circ} = \I \backslash \I_{\bullet}.
\end{equation}
%

A pair $(\I_{\bullet}, \tau)$ is called {\em admissible} (cf. \cite[Definition~2.3]{Ko14}) if the following conditions are satisfied:
\begin{itemize}
	\item	[(1)]	$\tau (\I_{\bullet}) = \I_{\bullet}$; 
	\item	[(2)]	The action of $\tau$ on $\I_{\bullet}$ coincides with the action of $-w_{\bullet}$; 
	\item	[(3)]	If $j \in \I_{\circ}$ and $\tau(j) = j$, then $\langle \rho^\vee_{\bullet}, j' \rangle \in \Z$.
\end{itemize}
In this paper, all pairs $(\I_{\bullet}, \tau)$ considered are admissible. 

%

Note that
$ \inv =  -w_{\bullet} \circ \tau$
is an involution of $X$ and $Y$. Following \cite{BW18b}, we introduce the $\imath$-weight lattice and $\imath$-root lattice
\begin{align}
  \label{XY}
 \begin{split}
X_{{\imath}} = X /  \breve{X}, & \quad \text{ where } \; \breve{X}  = \{ \la - \inv(\la) \vert \la \in X\},
 \\
Y^{\imath} &= \{\mu \in Y \big \vert \inv(\mu) =\mu \}.
\end{split}
\end{align}
For any $\la \in X$ denote its image in $X_{\imath}$ by $\overline{\la}$.

The involution $\tau$ of $\I$ induces an isomorphism of the $\Q(q)$-algebra $\U$, denoted also by $\tau$,
which sends $E_i \mapsto E_{\tau i}, F_i \mapsto F_{\tau i}$, and $K_\mu \mapsto K_{\tau \mu}$. 

\subsection{}
We recall the definition of quantum symmetric pair $(\U, \Ui)$, where $\Ui$ is a coideal subalgebra of $\U$ from \cite{Le99, Ko14, BK15, BK19}; also cf. \cite[\S3.3]{BW18b}. 

\begin{definition}\label{def:Ui}
The algebra $\Ui$, with parameters 
\begin{equation}
  \label{parameters}
\vs_{i} \in { \Z[q, q^{-1}]}, \quad \kappa_i \in \Z[q,q^{-1}], \qquad \text{ for }  i \in \I_{\circ},
\end{equation}
is the $\Qq$-subalgebra of $\U$ generated by the following elements:
\begin{align*}
F_{i}  &+ \vs_i \T_{w_{\bullet}} (E_{\tau i}) \widetilde{K}^{-1}_i 
+ \kappa_i \widetilde{K}^{-1}_{i} \, (i \in \I_{\circ}), 
 \\
& \quad K_{\mu} \,(\mu \in Y^{\imath}), \quad F_i \,(i \in \I_{\bullet}), \quad E_{i} \,(i \in \I_{\bullet}).
\end{align*}
The parameters are required to satisfy Conditions \eqref{kappa}-\eqref{vs2}: 
\begin{align}
 \label{kappa}
 \begin{split}
\kappa_i &=0 \; \text{ unless } \tau(i) =i, \langle i, j' \rangle = 0 \; \forall j \in \Iblack,
\\
&  
\qquad 
\text{ and } \langle k,i' \rangle \in 2\Z \; \forall k = \tau(k) \in \Iwhite \text{ such that } \langle k, j' \rangle = 0 \; \text{ for all } j \in \Iblack;
\end{split}
\\
\overline{\kappa_i} &= \kappa_i;   \label{kappa2}   
\\
\vs_{i} & =\vs_{{\tau i}} \text{ if }    i \cdot \theta (i) =0;
\label{vs=}
\\
\vs_{{\tau i}} &= (-1)^{ \langle 2\rho^\vee_{\bullet},  i' \rangle } q_i^{-\langle i, 2\rho_{\bullet}+\wb\tau i ' \rangle} {\ov{\vs_{i} }}.   \label{vs2}
\end{align}
\end{definition}
By definition, the algebra $\Ui$ contains $\U_{\Iblack}$ as a subalgebra.

\begin{rem}
\label{rem:par}
Note that the conditions \eqref{kappa}-\eqref{vs2} for the parameters are as general as in \cite{BK19}, except the integral requirement in \eqref{parameters} which is necessary for an integral form of $\Ui$. We are going to develop the theory of $\imath$canonical basis for $\Ui$ in this full generality. This is a significant improvement than the constraint that $\vs_i \in \pm q^{\Z}$ in \cite[Definition~3.5]{BW18b} even in finite type, where \eqref{vs2} was written as $\vs_{{\tau i}} \vs_i = (-1)^{ \langle 2\rho^\vee_{\bullet},  i' \rangle } q_i^{-\langle i, 2\rho_{\bullet}+\wb\tau i ' \rangle}$. 
\end{rem}

\begin{rem}
\label{rem:par2}
Our parameter $\vs_i$ is related to the notations of parameters in \cite{Ko14} and \cite{BK15} via $\vs_i =- s(\tau(i)) c_i$; we shall never need these additional parameters separately. The parameters $\vs_i$ can always be chosen to be $\vs_i \in q^\Z$ by \cite[Remark~ 3.14]{BK15}, once \cite[Conjecture~2.7]{BK15} (i.e., Conjecture~\ref{conj:BK} below) is established in Theorem~\ref{thm:nu=1}. This allows the specialization at $q=1$ of the QSP $(\U, \Ui)$ to the corresponding symmetric pair, justifying the terminology of QSP.
\end{rem}
Set \[
\Ainv = \{f\in \A \mid \bar{f} =f \}.
\]
\begin{rem}
\label{rem:par3}
The coefficient $ (-1)^{ \langle 2\rho^\vee_{\bullet},  i' \rangle } q_i^{-\langle i, 2\rho_{\bullet}+\wb\tau i ' \rangle}$ has been computed explicitly in \cite[Lemma~3.10]{BW18b} in finite type. 
The Satake diagrams of symmetric pairs of real rank one in finite type are listed in \cite[\S3, Table~1]{BW18b}. 
\cite[\S3, Table~ 3]{BW18b} on the values of $\vs_i$ for  quantum symmetric pairs of real rank one is now updated to become the following table, taking into account the relaxed conditions on parameters $\vs_i$ in \eqref{parameters}. 
%
\begin{table}[h]
\begin{tabular}{| c | c | c | c | c | c | c | c |}
\hline
\begin{tikzpicture}[baseline=0]
\node at (0, 0.2) {AI$_1$};
\end{tikzpicture} 
&
\begin{tikzpicture}[baseline=0]
\node at (0, 0.2) {AII$_3$};
\end{tikzpicture}
&
\begin{tikzpicture}[baseline=0]
\node at (0, 0.2) {AIII$_{11}$};
\end{tikzpicture}
	&
\begin{tikzpicture}[baseline=0]
\node at (0, 0.2) {AIV, n$\ge$2};
\end{tikzpicture} 
\\
\hline
$q_1^{-1} \cdot \Ainv$
&
$q \cdot \Ainv$
 &
$\vs_1=\vs_2\in \Ainv$
 &
$\vs_n  = (-1)^{n} q^{n-1} \overline{\vs_1} \in \A$
\\
\hline
\hline
\begin{tikzpicture}[baseline=0]
\node at (0, 0.2) {BII, n$\ge$ 2};
\end{tikzpicture} 
& 
\begin{tikzpicture}[baseline=0]
\node at (0, 0.2) {CII, n$\ge$3};
\end{tikzpicture}  
& 
\begin{tikzpicture}[baseline=0]
\node at (0, 0.2) {DII, n$\ge$4};
\end{tikzpicture}
&
\begin{tikzpicture}[baseline=0]
\node at (0, 0.2) {FII};
\end{tikzpicture}
\\
\hline
$q^{2n-3} \cdot \Ainv$
&
$q^{n-1} \cdot \Ainv$
 &
$q^{n-2} \cdot \Ainv$	
&
$q^5 \cdot \Ainv$
\\
\hline
\end{tabular}
\newline
\end{table}
\end{rem}

It is sometimes convenient to set
\begin{equation}\label{eq:Bi}
\ff_i = 
\begin{cases}
F_{i} + \vs_i  \T_{w_{\bullet}} (E_{\tau i}) \tK^{-1}_i  + \kappa_i \widetilde{K}^{-1}_{i} 
& \text{ if } i \in \I_{\circ};
\\
 F_{i} & \text{ if } i \in \I_{\bullet}.
 \end{cases}
\end{equation}

\subsection{}

For $i\in \Iwhite$, we define 
\begin{align}  \label{eq:Zi}
Z_i &= \frac{1}{q_i^{-1} -q_i}  \  r_i \big(\T_{w_{\bullet}} (E_i) \big) \in \U_{\Iblack}^+. 
\end{align} 
It is known (cf. \cite{BK15}) that  $Z_i \neq 0$ (we thank Kolb for explaining this fact in detail). Balagovic--Kolb showed in  \cite[Proposition~2.5]{BK15} that 
\begin{equation}
  \label{eq:nu}
\sigma \tau (Z_i) =\nu_i Z_i \; \text{ with } \nu_i \in \{1, -1\}, \quad \forall i\in \I_\circ.
\end{equation}

\begin{conj}  \cite[Conjecture~2.7]{BK15}
  \label{conj:BK}
We have $\sigma \tau (Z_i) = Z_i$, that is, $\nu_i=1$, for all $i\in \I_\circ$.
\end{conj}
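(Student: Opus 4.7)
The plan is to realize $Z_i$, viewed through an appropriate idempotent in $\Udot$, as a nonzero scalar multiple of a single canonical basis element $\dot{b}\in\Bdot$ after descending to a suitable cell quotient, and then exploit the fact that the canonical basis of $\Udot$ is preserved by both the anti-involution $\sigma$ (which fixes each $E_j$ and $F_j$ and commutes with the bar involution) and the algebra involution $\tau$ (which acts via the diagram involution). Both symmetries act as sign-free permutations of $\Bdot$. Given Balagovic-Kolb's a priori knowledge that $\sigma\tau(Z_i)=\nu_i Z_i$ with $\nu_i\in\{\pm 1\}$, it will then suffice to show that the canonical basis element $\dot{b}$ representing $Z_i$ in the cell quotient is fixed by $\sigma\tau$, which forces $\nu_i=+1$.

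The first step is to make $Z_i\in\U^+_{\Iblack}$ concrete. Using the Joseph-Letzter formulas \cite{JL94, JL96} for the braid operator $\T_{w_\bullet}(E_i)$ in terms of PBW-type root vectors, together with the skew-derivation rule \eqref{eq:rr} for $r_i$, I would compute $Z_i$ as an explicit $\Q(q)$-linear combination of monomials in the $E_j$ for $j\in\Iblack$. The non-vanishing statement $Z_i\neq 0$ is precisely the content of Lemma~\ref{lem:nonzero}, acknowledged as Letzter's contribution.

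Next, I would choose a dominant weight $\la\in X^+$ with $\la\gg 0$ and consider the action of $Z_i\one_\la$ on an extremal weight vector of $L(\la)$, such as $\eta_{w_\bullet\la}$. Since $Z_i\in\U^+_{\Iblack}$, the resulting vector lies in a $\U_{\Iblack}$-submodule isomorphic to some $L_{\Iblack}(\mu)$, whose canonical basis is well-understood as $\Iblack$ is of finite type. Invoking Lusztig's theory of based modules and two-sided cells \cite[Chapters~27,~29]{Lu94}, I would identify $Z_i\cdot\eta_{w_\bullet\la}$ modulo a cell filtration with a scalar multiple of a specific canonical basis vector of $L(\la)$; this identifies $Z_i\one_\la$ in the corresponding cell quotient of $\Udot\one_\la$ with a scalar multiple of a single $\dot{b}\in\Bdot$.

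Finally, the admissibility of the pair $(\Iblack,\tau)$ ensures that $\sigma\tau$ preserves the relevant $\Iblack$-sector and the cell, so it descends to the quotient and permutes its canonical basis without signs. Consequently $\sigma\tau(\dot{b})$ is again a canonical basis element of the same quotient, and the relation $\sigma\tau(Z_i)=\pm Z_i$ forces $\sigma\tau(\dot{b})=\dot{b}$ and $\nu_i=+1$. The main obstacle is the identification in the third paragraph, namely pinning down the image of $Z_i\cdot\eta_{w_\bullet\la}$ in the cell quotient as a single canonical basis vector; this requires a careful match between the PBW-type formulas for $\T_{w_\bullet}(E_i)$ and Lusztig's canonical basis, and is where I expect the technical heart of the argument to lie.
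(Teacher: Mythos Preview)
Your high-level strategy matches the paper's: descend $Z_i$ to a cell quotient where it becomes a scalar multiple of a single canonical basis element, then use that $\sigma\tau$ permutes the canonical basis without signs. But the paper makes two specific choices that dissolve what you flag as the ``main obstacle,'' and your proposal misses both.

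First, the paper works entirely in $\Udot_{\Iblack}$ (which is of \emph{finite type}), not in the full $\Udot$; there is no need for $\la\gg 0$ or for acting on $L(\la)$. Second, and crucially, the idempotent is taken to be $\one_{\alpha_i}$, not a generic one. The weight of $Z_i$ is $\wb\alpha_i-\alpha_i$, and the cell quotient $\Udot_{\Iblack}[{}^{\geq}\wb\alpha_i]/\Udot_{\Iblack}[{}^{>}\wb\alpha_i]$ is isomorphic to $\mathrm{End}\big(L_{\Iblack}(\wb\alpha_i)\big)$; the weight-$(\wb\alpha_i-\alpha_i)$ subspace of this endomorphism algebra is \emph{one-dimensional} (it is $\mathrm{Hom}$ between one-dimensional extremal weight spaces). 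Hence the image of $Z_i\one_{\alpha_i}$ in the cell quotient is automatically a scalar multiple of a unique canonical basis element $b_\bullet$, with no PBW computation and no explicit matching required. The Joseph--Letzter input is used only to guarantee that $Z_i$ acts nonzero on $L_{\Iblack}(\wb\alpha_i)$, so that this scalar is nonzero.

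For the final step, the paper uses that $\sigma$ carries $\Udot_{\Iblack}[{}^{\geq}\la]$ to $\Udot_{\Iblack}[{}^{\geq}-\wb\la]$ and that $\tau$ acts on $\Iblack$ as $-\wb$ (admissibility), so $\sigma\tau$ preserves each cell ideal and its canonical basis; since $\sigma\tau$ is weight-preserving and the relevant weight space is one-dimensional, $\sigma\tau(b_\bullet)=b_\bullet$ is forced. Your step of ``pinning down the image as a single canonical basis vector via PBW/canonical-basis matching'' is therefore unnecessary; with the right choice of idempotent it follows from a dimension count.
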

This conjecture will be established in full generality as Theorem~\ref{thm:nu=1} in Section~\ref{sec:lem}.
It is known \cite[Proposition 2.3]{BK15} that $\nu_i=1$ (that is, \cite[Conjecture~2.7]{BK15} holds) for $(\U, \Ui)$ of finite type. 

We shall assume Theorem~\ref{thm:nu=1} in the remainder of this section. 

\subsection{}

Throughout the paper, we make the following {\em basic assumptions}:
\begin{align}
  \label{eq:aij}
  |\langle i, j' \rangle |  \le 3,&  \; \forall i,j\in I,
  \quad
  \text{ or } \quad I_\bullet =\emptyset.
\end{align}

\begin{rem}
 \label{rem:assumption}
Condition~\eqref{eq:aij} is imposed so that explicit Serre type defining relations for $\Ui$ are available, and then the bar involution on $\Ui$ can be verified \cite{BK15} \cite{CLW18}. 
It is generally expected that the assumptions \eqref{eq:aij} can be removed eventually. 
\end{rem} 

The existence of the bar involution on $\Ui$ below was predicted in \cite{BW18a}.

\begin{lem} \cite{BK15} \cite{CLW18}
\label{lem:bar}
There is a unique anti-linear bar involution of the $\Q$-algebra $\Ui$, denoted by $\ov{\phantom{x}}$ or $\psi_{\imath}$, such that 
\begin{align*}
\psi_{\imath} (q) =q^{-1}, \quad
\psi_{\imath} (\ff_i) = \ff_i \; (i \in \I), \quad \ipsi(E_i) = E_i  \;(i \in \I_{\bullet}), \quad \ipsi (K_\mu) = K_{-\mu} \; (\mu \in Y^{\imath}).
\end{align*}
\end{lem}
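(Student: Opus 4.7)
The plan is to establish uniqueness by a generator count and then prove existence by exhibiting a presentation of $\Ui$ whose defining relations are preserved by the proposed map.

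For uniqueness, observe that the elements $\ff_i$ ($i\in\I$), $E_i$ ($i\in\Iblack$), and $K_\mu$ ($\mu\in Y^\imath$) generate $\Ui$ as a $\Qq$-algebra; combined with $\psi_\imath(q)=q^{-1}$ and the requirement of $\Q$-linearity, this pins down any such anti-linear map on all of $\Ui$.

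For existence, I would first extract an explicit Serre-type presentation of $\Ui$ and then verify bar-invariance of its relations. The relations split into three families: (i) the internal relations of $\U_{\Iblack}$ together with the Cartan-type relations $K_\mu \ff_i K_\mu^{-1} = q^{-\langle\mu,i'\rangle}\ff_i$ and $K_\mu E_j K_\mu^{-1} = q^{\langle \mu, j'\rangle}E_j$ for $\mu\in Y^\imath$, which are bar-invariant essentially because $\psi$ already preserves them on $\U$ and because $K_{-\mu}\in\Ui$ for $\mu\in Y^\imath$; (ii) the mixed relations coupling $E_j$ ($j\in\Iblack$) with $\ff_i$ ($i\in\Iwhite$); and (iii) the $\imath$Serre-type relations among the $\ff_i$'s. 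Families (ii) and (iii) are the nontrivial part. Under the assumption $|a_{ij}|\le 3$, the required explicit presentation is the one obtained by Balagovic--Kolb \cite{BK15}; under the alternative assumption $\Iblack=\emptyset$, it is the quasi-split Serre presentation of \cite{CLW18}, which does not need any bound on $(a_{ij})$.

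Given a presentation, the verification reduces to showing that each structure constant appearing in the relations is fixed by $q\mapsto q^{-1}$. The $\kappa_i$-dependent terms are bar-invariant by \eqref{kappa2}; the $\vs_i$-dependent terms become bar-invariant precisely when one combines \eqref{vs2} with the identity $\sigma\tau(Z_i)=Z_i$, i.e.\ $\nu_i=1$. This is why the Fundamental Lemma of QSP (Conjecture~\ref{conj:BK}, proved as Theorem~\ref{thm:nu=1} in Section~\ref{sec:lem}) is a prerequisite: without $\nu_i=1$, the coefficient $(-1)^{\langle 2\rho^\vee_\bullet,i'\rangle}q_i^{-\langle i,2\rho_\bullet+\wb\tau i'\rangle}$ in \eqref{vs2} could only be made bar-invariant up to a sign, and so $\overline{\ff_i}=\ff_i$ would fail in general.

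The main obstacle is family (iii): the $\imath$Serre relations carry subtle lower-order correction terms depending on $\tau$, $\wb$, and the parities $\langle\rho^\vee_\bullet,i'\rangle$, and matching these under the bar map is the central computation in \cite{BK15, CLW18}. Once $\nu_i=1$ is in hand and the presentation is fixed, the verification is a finite, relation-by-relation check; the abstract algebra map $\psi_\imath$ is then obtained by extending freely from the free algebra on the generators and descending to the quotient.
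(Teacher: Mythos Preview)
Your proposal is correct and matches the approach of the cited references \cite{BK15, CLW18}, to which the paper simply defers without giving an independent proof; indeed Remark~\ref{rem:assumption} and the remark following Theorem~\ref{thm:Upsilon} confirm that the argument proceeds exactly as you describe, via an explicit Serre-type presentation (available under assumption~\eqref{eq:aij}) whose relations are checked to be bar-invariant, conditional on $\nu_i=1$. Your identification of the role of Theorem~\ref{thm:nu=1} in removing that conditionality is also exactly what the paper records.
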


We recall the following theorem (cf. \cite[Theorem 2.3]{BW18a}, \cite[Theorem~6.10]{BK19}, \cite[Theorem 4.8, Remark 4.9]{BW18b}).
\begin{thm}  
   \label{thm:Upsilon}
There exists a unique family of elements $\Upsilon_{\mu} \in \U^+_{\mu}$, 
such that $\Upsilon_0 =1$ and $\Upsilon = \sum_{\mu} \Upsilon_{\mu}$ satisfies the following identity (in $\widehat{\U}$):
\begin{equation}\label{eq:Upsilon}
 \ipsi(u) \Upsilon = \Upsilon \psi(u), \qquad \text{for all } u \in \Ui.
\end{equation}
Moreover, $\Upsilon_{\mu} =0$ unless ${\mu^\inv} = - \mu \in X$. 
\end{thm}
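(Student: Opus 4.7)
The plan is to reduce the intertwining relation \eqref{eq:Upsilon} to a check on a generating set of $\Ui$ and then construct $\Upsilon$ recursively by weight. First I would extract the weight constraint from the generators $K_\nu$ with $\nu \in Y^\imath$: since $\psi(K_\nu) = \ipsi(K_\nu) = K_{-\nu}$, writing $\Upsilon = \sum_\mu \Upsilon_\mu$ with $\Upsilon_\mu \in \U^+_\mu$ and passing $K_{-\nu}$ through $\U^+$ yields $q^{-\langle \nu,\mu\rangle}\Upsilon_\mu = \Upsilon_\mu$ for every $\nu \in Y^\imath$. Since $\inv$ preserves the perfect pairing between $Y$ and $X$, the annihilator of $Y^\imath$ in $X \otimes_\Z \Q$ equals the $(-1)$-eigenspace of $\inv$ on $X \otimes_\Z \Q$, forcing $\mu^\inv = -\mu$.

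For uniqueness, I would fix $\mu$ with $\mu^\inv = -\mu$ and apply \eqref{eq:Upsilon} to $u = \ff_i$ with $i \in \I_\circ$. Since $\psi(\ff_i) = \ipsi(\ff_i) = \ff_i$, the relation becomes $\ff_i \Upsilon = \Upsilon \ff_i$. Expanding $\ff_i$ via \eqref{eq:Bi} and using the standard commutation formula for $[F_i,\cdot]$ acting on $\U^+$ (which involves $r_i$ and ${}_ir$ as in \eqref{eq:rr}), one projects onto weight components to obtain a recursion expressing $r_i(\Upsilon_\mu)$ and ${}_ir(\Upsilon_\mu)$ in terms of $\Upsilon_{\mu'}$ with $\hgt(\mu') < \hgt(\mu)$, the lower-order contributions collecting terms from $\vs_i\,\T_{w_\bullet}(E_{\tau i})\tK_i^{-1}\Upsilon$ and $\kappa_i \tK_i^{-1}\Upsilon$. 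Since the operators $\{r_i\}_{i \in \I}$ jointly separate points of $\U^+$ (nondegeneracy of the Lusztig bilinear form), this recursion together with the initial condition $\Upsilon_0 = 1$ determines $\Upsilon_\mu$ uniquely.

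The main obstacle is existence: the $\Upsilon_\mu$ produced by the recursion must satisfy the intertwining relation for all defining generators of $\Ui$ simultaneously, not merely the single $\ff_i$ used in the recursion. Concretely, one must verify compatibility with $E_j, F_j$ for $j \in \Iblack$ and with the Serre-type defining relations among the $\ff_i$, which are available in explicit form under the basic assumption \eqref{eq:aij} by \cite{BK15, CLW18}. I would organize this as an induction on $\hgt(\mu)$: the compatibility among different $\ff_i$ for $i \in \I_\circ$ unwinds into identities in $\U^+$ involving $\T_{w_\bullet}(E_{\tau i})$ and the elements $Z_i$ of \eqref{eq:Zi}, while the compatibility with $\U_\Iblack$ requires $\Upsilon$ to be invariant under $\U_\Iblack$ in an appropriate sense. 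This is precisely where the Fundamental Lemma of QSP, Theorem~\ref{thm:nu=1}, is indispensable: one needs $\sigma\tau(Z_i) = Z_i$ to match the two sides of the mixed Serre relations and close the consistency check, thereby replacing the case-by-case rank-one verifications of \cite{BW18a}.
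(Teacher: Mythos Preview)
The paper does not prove this theorem; it recalls it from \cite{BW18a, BK18, BW18b}, noting only that the standing hypothesis $\nu_i=1$ used in those references is now supplied by Theorem~\ref{thm:nu=1}. So there is no ``paper's own proof'' to compare against beyond the literature citations, and your outline does follow the strategy of those references (recursion on $\hgt(\mu)$, with existence amounting to a consistency check across the defining relations of $\Ui$).

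There is, however, a genuine error in your setup. You write that $\psi(\ff_i)=\ff_i$, so that \eqref{eq:Upsilon} for $u=\ff_i$ becomes $\ff_i\Upsilon=\Upsilon\ff_i$. This is false: $\ipsi(\ff_i)=\ff_i$ by Lemma~\ref{lem:bar}, but the $\U$-bar involution $\psi$ does \emph{not} fix $\ff_i$. For $i\in\Iwhite$ one has $\psi(\tK_i^{-1})=\tK_i$, $\psi(\vs_i)=\overline{\vs_i}$, and $\psi(\T_{w_\bullet}(E_{\tau i}))$ is a different element of $\U^+$, so $\psi(\ff_i)\neq \ff_i$. The correct starting point is $\ff_i\,\Upsilon=\Upsilon\,\psi(\ff_i)$, and it is precisely the discrepancy between $\ff_i$ and $\psi(\ff_i)$ that produces the nontrivial right-hand side of the recursion for $r_i(\Upsilon_\mu)$ and ${}_ir(\Upsilon_\mu)$ (cf.\ \cite[\S6]{BK18}). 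Your stated equation $\ff_i\Upsilon=\Upsilon\ff_i$ would, after projecting to weight components and using nondegeneracy, force $\Upsilon_\mu=0$ for all $\mu\neq 0$, which is absurd.

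A secondary point: the Fundamental Lemma enters earlier than you suggest. It is already needed for the existence of the bar involution $\ipsi$ on $\Ui$ (Lemma~\ref{lem:bar}), without which the identity \eqref{eq:Upsilon} is not even well posed; see the remark following Theorem~\ref{thm:Upsilon}. Its role is not confined to closing the Serre-relation consistency check in the recursion.
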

The formulation of the {\em quasi-$K$-matrix} $\Upsilon$ (called sometimes an {\em intertwiner}) was due to the authors \cite{BW18a}; its existence in full generality has been established in \cite{BK19} (also cf. \cite[Remark 4.9]{BW18b}).

\begin{rem}
Lemma~\ref{lem:bar} and Theorem~\ref{thm:Upsilon} were established in the literature under the assumption that $\nu_i=1$ for $i\in \I_\circ$; this assumption is now removed unconditionally thanks to Theorem~\ref{thm:nu=1}. Theorem~\ref{thm:Upsilon} for $\Ui$ with $\Iblack =\emptyset$ is new as its bar involution is only recently established in \cite{CLW18},  the usual proof in \cite{BW18a}-\cite{BK19} carries over. 
\end{rem}

\subsection{}
We define the modified version (i.e., idempotented version) $\Uidot$ of the $\imath$quantum groups following \cite[IV]{Lu94}. Some extra cares are required since the pairing between $X_\imath$ and $Y^\imath$ is not perfect in general, and we expand and correct slightly the definition given in \cite{BW18b}. We thank Hideya Watanabe for helpful remarks and suggestions.

We define an $X_\imath$-grading on $\Ui$ by assigning $B_i (i \in \I)$, $E_j (j \in \Iblack)$, $K_\mu (\mu \in Y^\imath)$ degree $-\overline{i'}$, $\overline{j'}$, $0$, respectively. One can easily check this is well-defined via Definition~\ref{def:Ui}. We denote by $\U^\imath(\zeta)$ the homogeneous subspace of $\Ui$ of degree $\zeta \in X_\imath$. Note that $\U^\imath(\zeta)$ is non-trivial if and only if $\zeta \in \overline{\Z[\I]} \subset X_\imath$.

For any $\lambda', \lambda'' \in X_\imath$ with $\zeta = \lambda '' -\lambda'$, we define 
\[
_{\lambda'}{\U}^\imath_{\lambda''} = \Ui(\zeta) / \big( \sum_{\mu \in Y^\imath} (K_\mu - q^{\langle \mu, \lambda' \rangle}) \Ui(\zeta) +  \sum_{\mu \in Y^\imath}  \Ui(\zeta) (K_\mu - q^{\langle \mu, \lambda'' \rangle}) \big).
\]
We denote by $\pi_{\lambda', \lambda''}: \Ui \rightarrow {}_{\lambda'}{\U}^\imath_{\lambda''}$ the natural quotient map, where $\pi_{\lambda', \lambda''}(x) =0$ if $x \not \in \Ui(\lambda'' - \lambda')$. We write $\one_{\lambda'} = \pi_{\lambda', \lambda'}(1)$ for the orthogonal idempotents. 

Following \cite[IV]{Lu94}, we then define an associative $\Qq$-algebra structure (without unit) on 
\[
	{\Uidot} = \bigoplus_{\lambda', \lambda'' \in X^\imath} {}_{\lambda'}{\U}^\imath_{\lambda''}.
\]
Moreover, the modified quantum group $\Udot$ is naturally a $(\Uidot, \Uidot)$-bimodule. For any $u \in \Uidot$ (or $\Ui)$ and $\one_{\lambda} \in \Udot$, we shall denote by $u \one_{\lambda} \in \Udot$ the action of the $u$ on $\one_\lambda$. The coproduct of $\Ui$ induces a similar structure on $\Uidot$ similar to \cite[23.1.5]{Lu94}.

The bar involution on $\Ui$ in Lemma~\ref{lem:bar} induces a bar involution $\ipsi$ on the $\Q$-algebra $\Uidot$ such that $\psi_{\imath} (q) =q^{-1}$ and
\begin{align*}
\psi_{\imath} (\ff_i \one_\zeta) = \ff_i \one_\zeta \; (i \in \I), \quad \ipsi(E_i \one_\zeta) 
= E_i \one_\zeta \;(i \in \I_{\bullet}), \quad \ipsi (\one_\zeta) = \one_{\zeta} \; (\zeta \in X_{\imath}).
\end{align*}

Recall $\Pa= \Pa_{\I_\bullet}$ and $\Padot$ from \S\ref{subsec:P}. Let $\U^+(w^\bullet)_{>}$ be the two-sided ideal of $\U^+$ generated by $E_i$ for $i \in \Iwhite$.
The composition map, denoted by $p_\imath= p_{\imath,\lambda}$, 
\begin{equation}
  \label{eq:comp3}
\Uidot \one_{\overline{\lambda}} \longrightarrow \Udot \one_{\lambda} 
 \longrightarrow  \Udot \one_\la\big/ \Udot   \U^+(w^\bullet)_{>} \one_\la \longrightarrow \Padot \one_{\lambda}, 
\end{equation}
is a $\Qq$-linear isomorphism; cf. \cite[\S3.8]{BW18b}.

\begin{definition}
  \label{def:mAUidot}
 We define $_{\mA} \Uidot$ to be the set of elements $u \in \Uidot$, such that $ u \cdot m \in {}_\mA\Udot$ for all $m \in {}_\mA\Udot$. Then $_{\mA} \Uidot$  is clearly a $\mA$-subalgebra of $\Uidot$ which contains all the idempotents $\one_\zeta$ $(\zeta \in X_\imath)$, and $_{\mA} \Uidot = \bigoplus_{\zeta \in X_\imath}\,  {}_{\mA} \Uidot \one_{\zeta}$.  
 \end{definition}


\subsection{Symmetries of $\Ui$ and $\Ui$-modules}
\subsubsection{}
  Recall the braid group operators $\T'_{i,e}$ and $\T''_{i,e}$, for $e=\pm 1$, from \cite{Lu94}. 
  
  \begin{prop}  \label{prop:rho}\cite[Theorem 4.2, Proposition~4.6, Proposition~4.13]{BW18b}
  Let $i\in \Iblack$, $j\in \Iwhite$, and $e =\pm 1$.
  \begin{enumerate}
  	\item The braid group operators $\T'_{i,e}$ and $\T''_{i,e}$ restrict to  isomorphisms of $\Ui$.
  	\item We have  $\T''_{i, e} (\Upsilon) = \Upsilon$ and $\T'_{i, e} (\Upsilon) = \Upsilon$.
  	\item Assume $\vs_{j} = q_j^{-1} \text{if } \kappa_j \neq 0$ and $\overline{\vs_j} = \vs_j^{-1}$ for the parameters. Then the anti-involution $\wp$ on $\U$ restricts to an anti-involution $\wp$ on $\U^\imath$.
  \end{enumerate}
  \end{prop}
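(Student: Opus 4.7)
The plan is to verify all three parts by reducing to computations on generators of $\Ui$, adapting the finite-type arguments from \cite[\S4]{BW18b} to the Kac-Moody setting; since $\I_\bullet$ is assumed to be of finite type, the parabolic subgroup $W_{\I_\bullet}$ and its longest element $w_\bullet$ behave exactly as before, so no genuinely new Kac-Moody phenomena enter.

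For part (1), I would check that $\T_i^{\pm} := \T''_{i,\pm 1}$ (and similarly $\T'_{i,\pm 1}$) maps each generator of $\Ui$ listed in Definition~\ref{def:Ui} into $\Ui$. The three easy cases are: (a) the generators $E_j, F_j$ for $j \in \Iblack$, which are handled by the standard braid relations inside $\U_\Iblack$; (b) the generators $K_\mu$ for $\mu \in Y^\imath$, for which one must check that $s_i \mu \in Y^\imath$, and this follows from admissibility condition (2) (which gives $-w_\bullet|_\Iblack = \tau|_\Iblack$), hence $s_i$ commutes with $\inv = -w_\bullet \circ \tau$ on $Y$. The nontrivial case is the image of $B_j = F_j + \vs_j \T_{w_\bullet}(E_{\tau j}) \tK_j^{-1} + \kappa_j \tK_j^{-1}$ for $j \in \Iwhite$. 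Here one uses the identity $s_i w_\bullet = w_\bullet s_{\tau i}$ (which holds since $-w_\bullet(i) = \tau(i)$ for $i \in \Iblack$) to rewrite $\T_i \T_{w_\bullet}(E_{\tau j}) = \T_{w_\bullet}\T_{\tau i}(E_{\tau j})$, combined with the Lusztig formulae for $\T_i(F_j)$ and the action of $\T_{\tau i}$ on $E_{\tau j}$. A routine but somewhat intricate computation then expresses $\T_i(B_j)$ as a polynomial in the generators of $\Ui$ (involving $B_k$ for $k$ in the relevant $\Iblack$-orbit together with $\U_\Iblack$-factors).

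For part (2), the strategy is to apply $\T_i$ to the defining identity $\ipsi(u)\Upsilon = \Upsilon \psi(u)$ of Theorem~\ref{thm:Upsilon}. By part (1), $\T_i$ restricts to an automorphism of $\Ui$, and the standard compatibilities between $\T'_{i,e}, \T''_{i,e}$ and $\psi$ (which have the form $\psi \circ \T''_{i,e} = \T'_{i,-e} \circ \psi$ in $\U$, with the analogous compatibility for $\ipsi$ on $\Ui$) imply that $\T_i(\Upsilon)$ satisfies exactly the same intertwining equation as $\Upsilon$. Since $\T_i$ preserves the weight grading only up to $s_i$, one also verifies that each $\T_i(\Upsilon_\mu)$ still lies in $\U^+$ with weight in $-\{\mu^\inv = -\mu\}$ (using that $s_i$ preserves $\{\mu \in X \mid \mu^\inv = -\mu\}$ again thanks to admissibility). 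Uniqueness of $\Upsilon$ in Theorem~\ref{thm:Upsilon} then forces $\T_i(\Upsilon) = \Upsilon$.

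For part (3), one verifies $\wp(\Ui) \subset \Ui$ on generators. The elements $K_\mu$ ($\mu \in Y^\imath$) are fixed by $\wp$; the images $\wp(E_i) = q_i^{-1} F_i \tK_i$ and $\wp(F_i) = q_i^{-1} E_i \tK_i^{-1}$ for $i \in \Iblack$ lie in $\Ui$ because $\tK_{\pm i} \in \Ui$ (recoverable from $[E_i, F_i]$ inside $\U_\Iblack \subset \Ui$). The crux is $\wp(B_j)$ for $j \in \Iwhite$: writing it out, one gets
\[
\wp(B_j) = q_j^{-1} E_j \tK_j^{-1} + \vs_j \tK_j^{-1}\, \wp\big(\T_{w_\bullet}(E_{\tau j})\big) + \kappa_j \tK_j^{-1},
\]
and then using the explicit form of $\wp\circ \T_{w_\bullet}$ on $E_{\tau j}$ (which produces $\T_{w_\bullet}(F_{\tau j})$ up to a computable scalar and a $\tK$-factor), one rewrites this as a scalar multiple of $\tK_\nu B_j \tK_\nu^{-1}$ or a closely related $\Ui$-element. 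The parameter conditions $\vs_j = q_j^{-1}$ whenever $\kappa_j \neq 0$ and $\overline{\vs_j} = \vs_j^{-1}$ are precisely what make these scalars match and ensure the $\kappa_j$-term survives inside $\Ui$; this is where the constraints come from.

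The main obstacle is the computation in part (1) for $\T_i(B_j)$, which requires carefully tracking how $\T_i$ interacts with the three summands in $B_j$ and recognizing the result as a combination of the $\Ui$-generators. All of the work, however, is local in nature and formally identical to \cite[\S4]{BW18b}, so the finite-type proofs transfer without substantive change.
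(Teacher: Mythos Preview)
The paper offers no proof beyond the citation to \cite[Theorem 4.2, Proposition~4.6, Proposition~4.13]{BW18b}, and your sketch faithfully reproduces that reference's approach (generator checks for parts (1) and (3), uniqueness of $\Upsilon$ for part (2)); since $\Iblack$ is of finite type, the local computations indeed transfer verbatim to the Kac-Moody setting, as you observe. One small slip: the bar--braid compatibility on $\U$ is $\psi \circ \T''_{i,e} = \T''_{i,-e} \circ \psi$ (double-primed on both sides), not $\T'_{i,-e}$, but this does not affect the strategy.
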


 \subsubsection{} 
  We define the anti-linear involution $\sigma'_\imath$ of $\U$ as 
  \[
  \sigma'_\imath=\sigma \circ \tau \circ \psi : \U \longrightarrow \U.
  \]
  
  \begin{lem}
  We have 
$\sigma'_\imath (\vs_i  \T_{w_{\bullet}} (E_{\tau i}) \tK^{-1}_i )  =\vs_{\tau i}  \T_{w_{\bullet}} (E_{i}) \tK^{-1}_{\tau i} ,$ 
for $i\in \Iwhite$.
  \end{lem}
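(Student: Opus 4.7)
The approach is a direct computation: apply the anti-linear anti-involution $\sigma'_\imath = \sigma\tau\psi$ factor by factor and match scalars via the parameter constraint \eqref{vs2}. On generators $\sigma'_\imath$ acts as $E_j\leftrightarrow E_{\tau j}$, $F_j\leftrightarrow F_{\tau j}$, $K_\mu\leftrightarrow K_{\tau\mu}$, $q\mapsto q^{-1}$, and it is an anti-homomorphism with $\sigma'_\imath(\vs_i) = \overline{\vs_i}$ and $\sigma'_\imath(\tK^{-1}_i) = \tK^{-1}_{\tau i}$, giving
\[
\sigma'_\imath\bigl(\vs_i\,\T_{w_\bullet}(E_{\tau i})\,\tK^{-1}_i\bigr) = \overline{\vs_i}\,\tK^{-1}_{\tau i}\,\sigma'_\imath\bigl(\T_{w_\bullet}(E_{\tau i})\bigr).
\]
Since $\sigma,\tau,\psi$ pairwise commute and $\tau(w_\bullet)=w_\bullet$ by admissibility of $(\Iblack,\tau)$, $\tau$ commutes with $\T_{w_\bullet}$, and the problem reduces to computing $\sigma\psi(\T_{w_\bullet}(E_{\tau i}))$ before applying $\tau$ via $\tau\T_{w_\bullet}(E_{\tau i}) = \T_{w_\bullet}(E_i)$.

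The crux is the eigenvector identity
\[
\sigma\psi\bigl(\T_{w_\bullet}(E_{\tau i})\bigr) = (-1)^{\langle 2\rho^\vee_\bullet,\, i'\rangle}\, q_i^{-\langle i,\, 2\rho_\bullet\rangle}\, \T_{w_\bullet}(E_{\tau i}).
\]
Restricted to $\U^+$, $\sigma\psi$ is the anti-linear anti-algebra involution fixing every $E_j$: on a monomial it reverses the order and replaces $q$ with $q^{-1}$. The base case $|\Iblack|=1$, $w_\bullet=s_j$, follows by direct calculation from Lusztig's formula $\T_j(E_{\tau i})=\sum_{r+s=-\langle j,\tau i'\rangle}(-1)^r q_j^{-r} E_j^{(s)} E_{\tau i} E_j^{(r)}$: reversing the order and bar-conjugating yields the scalar $(-1)^{\langle j,\tau i'\rangle}q_j^{-\langle j,\tau i'\rangle}$, matching the claimed scalar via the bilinear-form identity $(\alpha_j,\alpha_j)\langle j,\tau i'\rangle = (\alpha_{\tau i},\alpha_{\tau i})\langle \tau i, j'\rangle$ together with $\tau$-invariance of the form. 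For general $w_\bullet$, I would argue by induction on $\ell(w_\bullet)$: the element $\T_{w_\bullet}(E_{\tau i})$ is (up to scalar) the unique lowest weight vector in the finite-dimensional $\U_\Iblack$-submodule of $\U^+$ generated by $E_{\tau i}$ under the adjoint action, and the $\sigma\psi$-eigenvalue accumulates multiplicatively through the braid operators along a reduced expression of $w_\bullet$.

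With the eigenvector identity in hand and using its $\tau$-invariance, one has $\sigma'_\imath(\T_{w_\bullet}(E_{\tau i})) = c\,\T_{w_\bullet}(E_i)$ where $c = (-1)^{\langle 2\rho^\vee_\bullet, i'\rangle}q_i^{-\langle i, 2\rho_\bullet\rangle}$. Commuting $\tK^{-1}_{\tau i}$ past $\T_{w_\bullet}(E_i)$, whose weight is $w_\bullet\alpha_i$, produces a factor $q^{-(\tau i)\cdot w_\bullet\alpha_i}$, which equals $q_i^{-\langle i,\, w_\bullet\tau i'\rangle}$ by symmetry of the bilinear form and $\tau$-invariance. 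The resulting total scalar $\overline{\vs_i}\,c\,q_i^{-\langle i, w_\bullet\tau i'\rangle}$ equals $\vs_{\tau i}$ exactly by the parameter constraint \eqref{vs2}, yielding the lemma. The main obstacle is the eigenvector identity itself: a nontrivial quantum-group statement about how $\sigma\psi$ interacts with the braid group action on the finite-dimensional $\U_\Iblack$-submodules of $\U^+$.
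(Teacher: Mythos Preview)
Your overall strategy coincides with the paper's: apply $\sigma'_\imath=\sigma\tau\psi$ factor by factor, reduce everything to an eigenvector-type identity for $\T_{w_\bullet}(E_{\tau i})$, commute the Cartan factor past, and close with the parameter constraint \eqref{vs2}. The scalar bookkeeping you give is correct.

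The difference is in how the central identity is established. The paper does not argue by induction on $\ell(w_\bullet)$ or by lowest-weight considerations. It simply invokes the standard intertwining relations from \cite[\S37.2.4]{Lu94}, namely $\psi\circ\T''_{w,e}=\T''_{w,-e}\circ\psi$ and $\sigma\circ\T''_{w,e}=\T'_{w,-e}\circ\sigma$. Applying these in sequence gives
\[
\sigma\psi\bigl(\T''_{w_\bullet,+1}(E_{\tau i})\bigr)=\sigma\bigl(\T''_{w_\bullet,-1}(E_{\tau i})\bigr)=\T'_{w_\bullet,+1}(E_{\tau i}),
\]
and then a comparison of $\T'_{w_\bullet,+1}$ with $\T''_{w_\bullet,+1}$ on $E_i$ (again \cite[\S37.2.4]{Lu94}, together with a computation as in \cite[Corollary~4.5]{BW18b}) supplies the explicit scalar $(-1)^{\langle 2\rho^\vee_\bullet,i'\rangle}q_i^{-\langle i,2\rho_\bullet\rangle}$. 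This is a two-line derivation once those identities are quoted.

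Your inductive sketch is where the gap lies. The statement that ``the $\sigma\psi$-eigenvalue accumulates multiplicatively through the braid operators'' is \emph{exactly} the content of the Lusztig relations above; without them the induction step is not justified. Concretely, passing from $\T_w(E_{\tau i})$ to $\T_{s_jw}(E_{\tau i})$ requires knowing how $\sigma\psi$ intertwines $\T_j$, and that is the identity $\sigma\psi\,\T''_{j,+1}=\T'_{j,+1}\,\sigma\psi$ together with the $\T'/\T''$ comparison --- precisely what you would be re-proving. The lowest-weight-vector heuristic alone is insufficient: $\sigma\psi$ is an anti-automorphism of $\U^+$ and does not obviously preserve the adjoint $\U_{\Iblack}$-submodule generated by $E_{\tau i}$, and the full weight space $\U^+_{w_\bullet\alpha_{\tau i}}$ need not be one-dimensional. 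Replace the inductive paragraph by a direct citation of \cite[\S37.2.4]{Lu94} and the argument is complete.
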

  
  \begin{proof}
  We have 
  \begin{align*}
  	\sigma'_\imath (\vs_i  \T''_{w_{\bullet},+1} (E_{\tau i}) \tK^{-1}_i ) &= \sigma \circ \tau (\overline{\vs_i} \,\,   \overline{\T''_{w_{\bullet},+1} (E_{\tau i})} \tK_i ) \\
	& = \sigma \circ \tau (\overline{\vs_i} \,\,   \T''_{w_{\bullet}, -1} (E_{\tau i}) \tK_i )\\
	& = \sigma (\overline{\vs_i}\,\,  \T''_{w_{\bullet}, -1} (E_{ i}) \tK_{\tau i} )\\
	& = \overline{\vs_i} \,\, \tK^{-1}_{\tau i}  \sigma \T''_{w_{\bullet}, -1} (E_{ i})\\
	&\stackrel{(a)}{=}  \overline{\vs_i}  q_i^{-\langle \tau i, w_{\bullet}   i' \rangle}  \T'_{w_{\bullet}, +1} (E_{i})  \tK^{-1}_{\tau i} \\
	&\stackrel{(b)}{=}   \overline{\vs_i}  q_i^{-\langle  i, w_{\bullet} \tau i' \rangle}   (-1)^{ \langle 2\rho^\vee_{\bullet},  i' \rangle } q_i^{-\langle i, 2\rho_{\bullet} \rangle} \T''_{w_{\bullet}, +1} (E_{ i})  \tK^{-1}_{\tau i} \\
	& \stackrel{(c)}{=}  \vs_{\tau i} \T''_{w_{\bullet}, +1} (E_{ i})  \tK^{-1}_{\tau i}.
  \end{align*}
  Here the equality $(a)$ follows \cite[\S37.2.4]{Lu94}, $(b)$ follows from  \cite[\S37.2.4]{Lu94} and a similar computation as \cite[Corollar~4.5]{BW18b}, and finally $(c)$ follows from \eqref{vs2}.
  \end{proof}
  
 \begin{prop}
 We have a $\Qq$-linear anti-involution $\sigma_\imath = \psi_\imath \circ \sigma_\imath'$ of the algebra $\Ui$, such that
$\sigma_\imath (F_i)  =F_{\tau i}$, $\sigma_\imath (E_i)=E_{\tau i} \; (i \in \Iblack)$,
$\sigma_\imath (B_i)=B_{\tau i} \; (i \in \Iwhite)$,
and $\sigma_\imath (K_\mu)=K_{- \tau \mu} \; (\mu \in Y^\imath)$.
 \end{prop}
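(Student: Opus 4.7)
The plan is to establish the proposition in three stages: first show that the antilinear anti-involution $\sigma_\imath' = \sigma\circ\tau\circ\psi$ of $\U$ restricts to an anti-involution of $\Ui$, second show that $\psi_\imath \circ \sigma_\imath'$ is well-defined on $\Ui$ and $\Qq$-linear, and finally check the involution property.

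First, a quick check of types: $\sigma$ is an antilinear anti-involution (of the $\Q$-algebra $\U$, with $q\mapsto q$), $\tau$ is a linear automorphism, and $\psi$ is an antilinear involution, so $\sigma_\imath'$ is an antilinear anti-homomorphism. Since $\sigma,\tau,\psi$ pairwise commute on the generators $E_i, F_i, K_\mu$ (each map sends these generators to generators of the same type up to sign/index), a direct check shows $(\sigma_\imath')^2 = \sigma^2\tau^2\psi^2 = \id$ on $\U$. Thus $\sigma_\imath'$ is an antilinear anti-involution of $\U$.

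Next I compute $\sigma_\imath'$ on the generators of $\Ui$ from Definition~\ref{def:Ui}. For $i\in \Iblack$: $\sigma_\imath'(F_i) = \sigma\tau(F_i) = F_{\tau i}$ and $\sigma_\imath'(E_i) = E_{\tau i}$, both in $\Ui$ since $\tau(\Iblack)=\Iblack$. For $\mu\in Y^\imath$, one checks $\tau\mu\in Y^\imath$ using $\theta\mu=\mu$ and $\theta = -w_\bullet\tau$, and then $\sigma_\imath'(K_\mu) = \sigma\tau(K_{-\mu}) = K_{\tau\mu}\in \Ui$. For $i\in \Iwhite$ and the generator $\ff_i$, I split into three pieces: $\sigma_\imath'(F_i) = F_{\tau i}$; the preceding lemma gives $\sigma_\imath'(\vs_i\T_{w_\bullet}(E_{\tau i})\tK_i^{-1}) = \vs_{\tau i}\T_{w_\bullet}(E_i)\tK_{\tau i}^{-1}$; and $\sigma_\imath'(\kappa_i\tK_i^{-1}) = \overline{\kappa_i}\,\sigma\tau(\tK_i) = \kappa_i\tK_{\tau i}^{-1}$, using \eqref{kappa2} and the fact that $\kappa_i\neq 0$ forces $\tau i=i$ so $\kappa_{\tau i}=\kappa_i$. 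Summing, $\sigma_\imath'(\ff_i) = \ff_{\tau i}$. Hence $\sigma_\imath'$ preserves $\Ui$ and is an antilinear anti-involution of $\Ui$.

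Now compose with the bar involution: $\sigma_\imath := \psi_\imath\circ\sigma_\imath'$. Since both factors are antilinear on scalars, $\sigma_\imath$ is $\Qq$-linear; since $\psi_\imath$ is an algebra homomorphism and $\sigma_\imath'$ is an anti-homomorphism, $\sigma_\imath$ is a $\Qq$-linear anti-homomorphism of $\Ui$. Applying $\psi_\imath$ to the formulas above and using Lemma~\ref{lem:bar} gives $\sigma_\imath(F_i) = F_{\tau i}$ and $\sigma_\imath(E_i) = E_{\tau i}$ for $i\in\Iblack$ (here $\psi_\imath(F_{\tau i}) = \ff_{\tau i} = F_{\tau i}$), $\sigma_\imath(B_i) = \psi_\imath(\ff_{\tau i}) = \ff_{\tau i} = B_{\tau i}$ for $i\in\Iwhite$, and $\sigma_\imath(K_\mu) = \psi_\imath(K_{\tau\mu}) = K_{-\tau\mu}$.

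Finally, for the involution property I show $\sigma_\imath' \psi_\imath = \psi_\imath \sigma_\imath'$ on $\Ui$; both sides are $\Qq$-linear anti-algebra maps, so it suffices to compare on the generators, and the formulas above do so in each case (e.g.\ $\sigma_\imath'\psi_\imath(\ff_i) = \sigma_\imath'(\ff_i) = \ff_{\tau i} = \psi_\imath(\ff_{\tau i}) = \psi_\imath\sigma_\imath'(\ff_i)$). Therefore $\sigma_\imath^2 = \psi_\imath\sigma_\imath'\psi_\imath\sigma_\imath' = \psi_\imath^2 (\sigma_\imath')^2 = \id$, completing the proof. The main subtle point will be the computation of $\sigma_\imath'$ on $\ff_i$ for $i\in\Iwhite$, which is why the preceding lemma is placed exactly where it is; everything else is a clean bookkeeping exercise with the symmetries of Proposition~\ref{prop:invol}.
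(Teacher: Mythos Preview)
Your proof is correct and follows essentially the same approach as the paper: compute $\sigma_\imath'$ on the generators of $\Ui$ (with the preceding lemma handling the $\T_{w_\bullet}(E_{\tau i})\tK_i^{-1}$ piece) to see it restricts to an antilinear anti-involution of $\Ui$, and then compose with $\psi_\imath$. One small slip in wording: $\sigma$ is a $\Qq$-\emph{linear} anti-involution, not antilinear (as your parenthetical ``$q\mapsto q$'' already indicates); this does not affect the argument, and your explicit verification that $\sigma_\imath' \psi_\imath = \psi_\imath \sigma_\imath'$ on generators (hence $\sigma_\imath^2=\id$) is a nice addition that the paper leaves implicit.
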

 
 \begin{proof}
   Note that $\sigma_\imath' (F_i) =F_{\tau i}$, $\sigma'_\imath (E_i)=E_{\tau i}$, and $\sigma'_\imath (K_\mu)=K_{\tau \mu}$, for all $i \in \I, \mu \in Y$. It follows that 
   \[
   \sigma'_\imath (q)=q^{-1}, \; \sigma'_\imath (B_i)=B_{\tau i} \; (i\in \I_\circ), \quad
   \sigma'_\imath (K_\mu)=K_{\tau \mu}\; (\mu \in Y^\imath), 
   \]
   and hence $\sigma_\imath'$ restricts to an anti-linear anti-involution on the subalgebra $\Ui$. 
   
   The proposition follows immediately from the above and the definition of $\ipsi$ in Lemma~\ref{lem:bar}.
 \end{proof}
 Note $\sigma_\imath$ takes a particular neat form when $\tau=\id$, and it is strikingly similar to the anti-involution $\sigma$ on $\U$.

\subsubsection{}
Following \cite[\S4.5]{BW18b}, we consider the automorphism obtained by the composition 
$$
\vartheta =  \sigma \circ \wp \circ \tau : \U \longrightarrow \U, 
$$
which sends
\begin{equation}\label{eq:Ttauoemga}
\vartheta (E_i) = q_{\tau i}F_{\tau i} \tK_{- \tau i} , \quad  \vartheta (F_i) = q_{\tau i}  E_{\tau i} \tK_{\tau i},  \quad \vartheta (K_{\mu}) = K_{-\tau \mu}.
\end{equation}

For any  $\U$-module $M$, we define a new $\U$-module ${}^\vartheta M $ as follows: 
${}^\vartheta M$ has the same underlying $\Qq$-vector space as $M$ but we shall denote a vector in ${}^\vartheta M$ by ${}^\vartheta m$ for $m\in M$, and
the action of $u\in \U$ on ${}^\vartheta M$ is now given by $u \; {}^\vartheta m = {}^\vartheta (\vartheta^{-1} (u)m)$. 
Hence we have 
\begin{equation*}
\vartheta  (u) \; {}^\vartheta m = {}^\vartheta ( u m), \qquad \text{ for } u \in \U, m \in M.
\end{equation*}
As ${}^\vartheta M$ is simple if the $\U$-module $M$ is simple, one checks by definition that
\[
{}^\vartheta L(\lambda) \cong {}^{\omega}L(\lambda^{\tau}).
\]
\begin{rem}
Note that $\vartheta$ is {\em not} an automorphism of the subalgebra $\Ui$ in general (as we allow more general parameters $\vs_i$). Nevertheless $M$ and ${}^\vartheta M $ are both $\U$-modules and hence $\Ui$-modules by restriction.
\end{rem}

%

Let $g : X \longrightarrow  \Qq$ be such that \cite[(4.16)-(4.17)]{BW18b} hold. The function $g$ induces a $\Qq$-linear map from any weight $\U$-module $M = \oplus_{\mu \in X}M_\mu$ to itself: 
\begin{equation}
 \label{eq:g2}
\widetilde{g}: M\longrightarrow M, 
\qquad 
\widetilde{g}(m) = g (\mu) m, \quad \text{ for } m \in M_{\mu}.
\end{equation}

Recall we denote by $\eta_{\lambda}$ the highest weight vector in $L(\lambda)$.
 Let $\eta^{\bullet}_{\lambda}$ be the unique canonical basis element in $L(\lambda)$ of weight $\wb \lambda$. Recall $\lambda^\tau = \tau(\lambda)$. 
 Let $\mathcal C^{\text{hi}}$ be the BGG category of $\U$-modules with weights in $X$ and $\mathcal C'$ be the full subcategory of $\mathcal C^{\text{hi}}$ consisting of integrable $\U$-modules \cite[3.4.7, 3.5.1]{Lu94}. 

\begin{thm} (cf. \cite[Theorem~4.18]{BW18b}, \cite[Theorem 7.5]{BK19})
   \label{thm:mcT}
For any integrable $\U$-module $M = \oplus_{\mu \in X}M_\mu$ in $\mathcal C'$, we have the following isomorphism of $\Ui$-modules 
\[
\mc{T} := \Upsilon \circ \widetilde{g} \circ  \T^{-1}_{\wb} : M \longrightarrow {}^\vartheta M.
\]

In particular, we have the isomorphism of $\Ui$-modules
\[
\mc{T} : L(\lambda) \longrightarrow {}^\omega L(\lambda^{\tau}),
\qquad \etab_{\lambda} \mapsto \xi_{-\lambda^{\tau}}.
\]
\end{thm}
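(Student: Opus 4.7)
The plan is to verify the intertwining property $\mc{T}(u \cdot m) = \vartheta^{-1}(u) \cdot \mc{T}(m)$ for all $u \in \Ui$ and $m \in M$, since by linearity it suffices to check this on a set of generators of $\Ui$. Because each of $\T^{-1}_{\wb}$, $\widetilde{g}$, and $\Upsilon$ is invertible (the first by the braid group, the second as a nonzero scalar on each weight space by the prescribed $g$, and the third because $\Upsilon = 1 + \sum_{\mu > 0} \Upsilon_\mu$ is locally unipotent on any weight module), once $\mc{T}$ is shown to be a $\Ui$-module map it is automatically an isomorphism.

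The cases then split naturally. First, for $K_\mu$ with $\mu \in Y^\imath$, all three constituent maps are weight-graded, so the identity reduces to comparing the weight shifts against the formula $\vartheta(K_\mu) = K_{-\tau\mu}$ in \eqref{eq:Ttauoemga}; the compatibility is a direct computation. Next, for $E_j, F_j$ with $j \in \Iblack$, I would use that $\T_{\wb}$ is an automorphism of $\U_{\Iblack}$ permuting the simple root generators according to $-\wb$, that $\ipsi|_{\U_{\Iblack}} = \psi|_{\U_{\Iblack}}$ (so that the defining identity $\ipsi(u)\Upsilon = \Upsilon\psi(u)$ of \thmref{thm:Upsilon} implies $\Upsilon$ commutes with $\U_{\Iblack}$ in the appropriate twisted sense), combined with the conditions \cite[(4.16)--(4.17)]{BW18b} imposed on $g$ that encode exactly the scalar corrections needed to pass from $\T^{-1}_{\wb}$ to $\vartheta^{-1}$ on the $\Iblack$-generators.

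The substantial case is $B_i$ for $i \in \Iwhite$. Here I would expand
\[
B_i = F_i + \vs_i \T_{\wb}(E_{\tau i}) \tK^{-1}_i + \kappa_i \tK^{-1}_i
\]
and compute $\vartheta^{-1}(B_i)$ using \eqref{eq:Ttauoemga}, then match it term-by-term against $\mc{T} \circ B_i \circ \mc{T}^{-1}$. The key manipulations are: (i) apply $\T^{-1}_{\wb}$ to the middle term, which collapses the outer $\T_{\wb}$ and produces $E_{\tau i}$ multiplied by $\T^{-1}_{\wb}(\tK^{-1}_i)$, a torus element computed via the $\wb$-action on weights; (ii) use the defining identity $\ipsi(B_i) \Upsilon = \Upsilon \psi(B_i)$ of $\Upsilon$ together with $\psi(B_i) = B_i$ (\lemref{lem:bar}) to move $\Upsilon$ past $B_i$ at the cost of turning it into $\ipsi(B_i)$; (iii) match the resulting expression with $\vartheta^{-1}(B_i)$ using the parameter constraints \eqref{vs=}--\eqref{vs2} and the compatibility conditions on $g$. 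The verification is exactly parallel to \cite[Theorem~4.18]{BW18b} and \cite[Theorem~7.5]{BK18}; since every identity invoked is a purely algebraic statement inside $\U$ or on a single weight space of $M$, nothing breaks in the Kac-Moody setting provided $\T_{\wb}$ acts on $M$, which holds for $M$ finite-dimensional and more generally integrable.

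For the particular case, I would apply the general statement to $M = L(\lambda)$ together with the identification ${}^\vartheta L(\lambda) \cong {}^\omega L(\lambda^\tau)$ recalled before the theorem, and evaluate $\mc{T}$ on the canonical vector $\etab_\lambda$. Since $\etab_\lambda$ is the (normalized) lowest weight vector of the $\U_{\Iblack}$-submodule $L_{\Iblack}(\lambda) \subset L(\lambda)$ generated by $\eta_\lambda$, a standard computation gives $\T^{-1}_{\wb}(\etab_\lambda) = \eta_\lambda$; next $\widetilde{g}(\eta_\lambda) = g(\lambda)\eta_\lambda$; and finally $\Upsilon(\eta_\lambda) = \eta_\lambda$ because $\U^+_\mu \eta_\lambda = 0$ for $\mu > 0$ in $L(\lambda)$. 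Under the identification $\eta_\lambda \leftrightarrow \xi_{-\lambda^\tau}$ of ${}^\vartheta L(\lambda)$ with ${}^\omega L(\lambda^\tau)$, the normalization $g(\lambda) = 1$ prescribed in \cite[(4.16)]{BW18b} yields $\mc{T}(\etab_\lambda) = \xi_{-\lambda^\tau}$. The main obstacle in the argument is the bookkeeping in case (iii) above for $B_i$, where the interplay between the parameter $\vs_i$, the scalar $g$, and the $\wb$-twist must be checked with sign precision; once this is carried out the remaining cases are formal.
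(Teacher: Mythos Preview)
Your approach is essentially the same as the paper's, which simply records that the proof of \cite[Theorem~4.18]{BW18b} carries over verbatim with one modification: in the analogue of \cite[Lemma~4.16]{BW18b} one must replace $\vs_{\tau i}^{-1}$ by $\overline{\vs_{\tau i}}$, reflecting the relaxed parameter condition \eqref{vs2} in which $\vs_i$ is no longer constrained to $\pm q^{\Z}$. You gloss over exactly this point in step~(iii), so it is worth flagging that this is the only place where anything changes relative to the cited reference.

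One slip in your step~(ii): \lemref{lem:bar} gives $\ipsi(B_i) = B_i$, not $\psi(B_i) = B_i$. Combined with \eqref{eq:Upsilon} this yields $B_i\,\Upsilon = \Upsilon\,\psi(B_i)$, so moving $\Upsilon$ past $B_i$ produces $\psi(B_i)$, not $\ipsi(B_i)$. This matters, because $\psi(B_i)$ involves $\overline{\vs_i}$ and $\psi\big(\T_{\wb}(E_{\tau i})\big)$, and it is precisely in unwinding this expression against $\widetilde{g}\circ\T_{\wb}^{-1}$ and $\vartheta^{-1}$ that the substitution $\vs_{\tau i}^{-1}\rightsquigarrow\overline{\vs_{\tau i}}$ is forced.
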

We note that the function ${g}$ above can be chosen such that $\mc{T}$ is an isomorphism of the $\mA$-form ${}_\mA L(\lambda) \longrightarrow {}^\omega_\mA L(\lambda^{\tau})$ once Corollary~\ref{cor:tensor} is established.

\begin{proof}
The same proof as \cite[Theorem~4.18]{BW18b} applies with minor modifications as specified below.

	The definition of the weight function $g$ in \cite[(4.15)]{BW18b} remains the same. However, we should replace $\vs_{\tau i}^{-1}$ by $\overline{\vs_{\tau i}}$ for the first identity in \cite[Lemma~4.16]{BW18b}, thanks to our relaxed conditions on parameters \eqref{parameters}. Otherwise, the proof of the lemma remains identical, and the original proof for \cite[Theorem~4.18]{BW18b} applies here verbatim.
\end{proof}


\section{Fundamental Lemma for QSP}
\label{sec:lem}

The goal of this section is to establish the Balagovic-Kolb conjecture \cite[Conjecture~2.7]{BK15} in full generality. The main tool here is Lusztig's theory of based modules and cells for $\U_{\Iblack}$ as developed in \cite[Chapters 27, 29]{Lu94}. 

Recall $Z_i$ from \eqref{eq:Zi}, and recall from \eqref{eq:nu} that $\sigma \tau (Z_i) =\nu_i Z_i$ for some $\nu_i \in \{1, -1\}$ and any $i\in \I_\circ.$
Balagovic-Kolb conjectured \cite[Conjecture~2.7]{BK15} (which is recalled in Conjecture~\ref{conj:BK}) that $\nu_i =1$ for all $i$ for $\U$ of Kac-Moody type. 
This conjecture looks rather technical and innocent but has been critical in several advances in the theory of QSP; for such reasons we have referred to the BK conjecture as the {\em Fundamental Lemma of QSP}.  

Several crucial results, such as the existence of bar involution with the parameters $\vs_i$ chosen to be in $q^\Z$ and the existence of quasi-K matrix, are established only under the assumption of this conjecture. Without the validity of the conjecture, it is unclear if suitable parameters for the QSP $(\U, \Ui)$ can be chosen to ensure the bar involution, quasi-K matrix and a meaningful specialization at $q=1$ to the usual symmetric pair. 
The conjecture was only known (based on results in \cite{BK15}) to hold for $\U$ of locally finite type, in the sense that all the real rank one Levi subalgebras of $\Ui$ are of finite type.  

\begin{thm} [Fundamental Lemma of QSP]
 \label{thm:nu=1}
For $\U$ of an arbitrary Kac-Moody type, we have $\nu_i=1$, that is, $\sigma \tau (Z_i) = Z_i$, for all $i\in \I_\circ$. 
\end{thm}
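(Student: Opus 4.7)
The strategy, as outlined in the introduction, is to realize $Z_i$ inside a suitable cell quotient of $\Udot_{\Iblack}$ as a nonzero scalar multiple of a canonical basis element; then the symmetries $\sigma$ and $\tau$, which preserve the canonical basis, will force $\nu_i = 1$. First I would observe that $\T_{\wb}(E_i)$ lies in $\U^+$ and has weight $\wb(i)$, so $Z_i$ lies in $\U^+_{\Iblack}$ (using admissibility of $(\Iblack, \tau)$ to see that $\wb(i) - i$ is an $\N$-combination of simple roots in $\Iblack$). Moreover $\sigma$ restricts to an anti-involution of $\U^+_\Iblack$ fixing every $E_j$ ($j \in \Iblack$) and therefore preserves Lusztig's canonical basis $\{b^+ \mid b \in \B_\Iblack\}$; the diagram automorphism $\tau$ preserves $\U^+_\Iblack$ and permutes this basis. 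Consequently $\sigma\tau$ carries the canonical basis of $\U^+_\Iblack$ to itself, and this extends compatibly to $\Udot_\Iblack$ (via $\dot\B_\Iblack$) and descends to any cell quotient.

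Granted the key claim that $Z_i \one_\zeta$, for a suitable $\zeta \in X$, maps in a chosen cell quotient to $c\,b$ with $c \in \Qq^{\times}$ and $b$ a canonical basis vector of that quotient, the conclusion follows quickly. Indeed, on one hand $\sigma\tau(Z_i \one_\zeta) = \nu_i\, Z_i\, \one_{\sigma\tau(\zeta)}$, and on the other hand $\sigma\tau(c\,b) = c\,b'$ for another canonical basis vector $b'$. Matching these forces $b' = \nu_i b$ in the cell quotient; since distinct canonical basis elements are linearly independent and $\nu_i \in \{1,-1\}$, we conclude $b' = b$ and $\nu_i = 1$.

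The technical heart of the argument is therefore the identification of $Z_i \one_\zeta$ with a canonical basis element in an appropriate cell quotient. Since $\Iblack$ is of finite type, one has Lusztig's theory of two-sided cells \cite[Ch.~29]{Lu94} on $\Udot_{\Iblack}$ and the based quotients attached to them. The plan is to choose $\zeta$ so that $Z_i \one_\zeta$ realizes a specific matrix coefficient of its action on a finite-dimensional simple $\U_\Iblack$-module $L_{\Iblack}(\la)$ equipped with its canonical basis; the target cell is then the one associated to the longest element of the parabolic subgroup $W_\Iblack$. Here the results of Joseph-Letzter \cite{JL94,JL96} enter: they give explicit formulas and normalizations controlling $\T_{\wb}(E_i)$ and its image under the skew derivation $r_i$, which one combines with Lusztig's bar-invariant presentation of the cell quotient to isolate the leading canonical basis term. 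The nonvanishing $Z_i \neq 0$, already recorded in the literature, guarantees $c \neq 0$.

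The main obstacle I expect is the precise pinpointing of the cell and the integral weight $\zeta$, and then the rigorous verification that the image of $Z_i \one_\zeta$ is a single basis vector rather than merely an $\A$-integral combination of such. This requires delicate control over the interplay between the braid operator $\T_{\wb}$, the skew derivation $r_i$, and Lusztig's canonical basis in the cell quotient; the Joseph-Letzter calculus of $\U^+_\Iblack$-module structure and the positivity/integrality built into \cite[Ch.~27]{Lu94} should be exactly what makes this feasible in finite type. Once this identification is secured, the symmetries $\sigma$ and $\tau$ deliver $\nu_i = 1$ at once, and the Balagovic-Kolb conjecture is established in full Kac-Moody generality.
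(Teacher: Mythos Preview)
Your overall strategy matches the paper's: pass to a cell quotient of $\Udot_\Iblack$, show the image of $Z_i \one_\zeta$ is a nonzero scalar multiple of a canonical basis element, and use that $\sigma\tau$ permutes the canonical basis. However, you are overcomplicating the technical heart and mischaracterizing both the choice of cell and the role of Joseph--Letzter.

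The simplification you are missing is a dimension count. Take $\zeta = \alpha_i$ and use the cell quotient $\Udot_\Iblack[{}^\geq \wb\alpha_i]/\Udot_\Iblack[{}^> \wb\alpha_i] \cong \mathrm{End}\big(L_\Iblack(\wb\alpha_i)\big)$ (this is the cell attached to the dominant weight $\wb\alpha_i$, not to the longest element of $W_\Iblack$). The element $Z_i\one_{\alpha_i}$ has weight $\wb\alpha_i - \alpha_i$, and the corresponding weight space in $\mathrm{End}\big(L_\Iblack(\wb\alpha_i)\big)$ is \emph{one-dimensional}: it is spanned by the rank-one map sending the lowest weight vector to the highest weight vector. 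Hence there is a unique canonical basis element $b_\bullet$ of that weight, and the image of $Z_i\one_{\alpha_i}$ is automatically $c\cdot b_\bullet$ for some scalar $c$; no ``delicate control,'' positivity, or integrality is required. Since $\sigma\tau$ preserves weights and $b_\bullet$ is the unique canonical basis element of its weight, $\sigma\tau(b_\bullet)=b_\bullet$ is immediate (your argument via ``$-b$ cannot also be canonical'' is correct but unnecessary).

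The Joseph--Letzter input is also more modest than you describe: it is not used for explicit formulas or normalizations of $\T_{\wb}(E_i)$, but only for the abstract injectivity of $\mathrm{ad}(\U_\Iblack)(\tK_i^2) \to \mathrm{End}\big(L_\Iblack(\wb\alpha_i)\big)$. Since $Z_i\tK_i^2$ lies in this adjoint orbit, injectivity forces $Z_i$ to act nonzero on $L_\Iblack(\wb\alpha_i)$, whence $c\neq 0$. One also needs to check (easily, from the definition of the cell filtration) that $Z_i\one_{\alpha_i}$ actually lands in $\Udot_\Iblack[{}^\geq \wb\alpha_i]$ and that $\sigma\tau$ preserves this filtration; the latter uses that $\tau$ on $\Iblack$ coincides with $-\wb$.
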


Let us prepare several lemmas. Denote 
\[
X_{\Iblack}^+ =\{\la \in X \mid \langle i, \la \rangle \in \N, \forall i \in \Iblack \}.
\]
Let  us fix $i\in \I_\circ$. We write $\alpha_i = i' \in X$ for notational consistency with \cite{BK15}. Then we may and shall regard $-\alpha_i \in X_{\Iblack}^+$ thanks to $i \cdot j \leq 0$ for $j\in \Iblack$. 
Recall $\xi_{\alpha_i}$ denotes the lowest weight vector in ${}^\omega L_{\Iblack}(-\alpha_i)= L_{\Iblack}(\wb \alpha_i)$. 

\begin{lem}
  \label{lem:nonzero}
The element $Z_i\in \U_{\Iblack}^+$ acts on $L_{\Iblack}(\wb \alpha_i)$ as a nonzero map.
In particular, we have 
$Z_i (\xi_{\alpha_i}) \neq 0$. (Since $Z_i\in \U^+_{\wb \alpha_i - \alpha_i}$ by \eqref{eq:Zi}, $Z_i (\xi_{\alpha_i})$ is of highest weight.)
\end{lem}

\begin{proof}
We owe this proof to Gail Letzter for her suggestions and references. The statement follows from a very special case of general results of Joseph and Letzter.

Let $\la \in X_{\Iblack}^+$. Denote by $K \in \U_{\Iblack}^0$ such that $KE_j K^{-1} =q_j^{- \langle j,\lambda \rangle} E_j$, for all $j \in \Iblack$. (This $K$ is relevant to \cite[Proposition~2.4]{BK15}.) Note the notation $\tau(\la)$ in Joseph-Letzter \cite{JL94, JL96} translates to $K^2$ here. It follows by  \cite[Corollary~3.3]{JL94} and \cite[(8.6)]{JL96}  that  
${\rm ad} (\U_{\Iblack}) (K^{2}) \longrightarrow {\rm End} \big(L_{\Iblack}(\la) \big)$ is injective.
(Actually this is an isomorphism for dimension reason.) 
 
 Setting $\la =\wb \alpha_i$, we have $K=\tK_i$, and $Z_i \tK_i^2 \in {\rm ad} (\U_{\Iblack}) (\tK_i^2)$ by \cite[(4.4)]{Ko14} (or \cite[(2.21)]{BK15}; by the injectivity above $Z_i \tK_i^2$ and hence $Z_i$ have nonzero images in ${\rm End} \big(L_{\Iblack}(\wb \alpha_i) \big)$. The lemma is proved. 
\end{proof} 

%
%

Recall from \cite[29.1.2]{Lu94} the two-sided ideals $\Udot_{\Iblack}[{}^> \wb \alpha_i]$ and $\Udot_{\Iblack}[{}^\geq \wb \alpha_i]$ of $\Udot_{\Iblack}$ are based submodule of $\Udot_{\Iblack}$. In addition, we have that $\Udot_{\Iblack}[{}^> \wb \alpha_i] \subset \Udot_{\Iblack}[{}^\geq \wb \alpha_i]$, and moreover $\Udot_{\Iblack}[{}^\geq \wb \alpha_i] \big/ \Udot_{\Iblack}[{}^> \wb \alpha_i]$ admits a canonical basis. The natural action of $ \Udot_{\Iblack}[{}^\geq \wb \alpha_i]$ on $L(\wb \alpha_1)$ factor through the projection 
\[
{\rm pr}: \Udot_{\Iblack}[{}^\geq \wb \alpha_i]  \longrightarrow \Udot_{\Iblack}[{}^\geq \wb \alpha_i] \big/ \Udot_{\Iblack}[{}^> \wb \alpha_i],
\]
 and it further induces an $\Qq$-algebra isomorphism below (cf. \cite[Proposition~ 29.2.2]{Lu94}):
\begin{equation}
  \label{eq:cell}
 \xymatrix{\Udot_{\Iblack}[{}^\geq \wb \alpha_i] \ar[d]^{{\rm pr}} \ar[drr] \\
\Udot_{\Iblack}[{}^\geq \wb \alpha_i] \big/ \Udot_{\Iblack}[{}^> \wb \alpha_i]  \ar[rr]^{\cong} && \text{End} \big(L(\wb \alpha_i) \big) }
\end{equation}

\begin{lem}
  \label{lem:cell}
We have 
\begin{enumerate}
\item
$Z_i \one_{\alpha_i} \in \Udot_{\Iblack} [\geq \wb \alpha_i]$;
\item
${\rm pr}(Z_i  \one_{\alpha_i}) \neq 0$ in $\Udot_{\Iblack}[{}^\geq \wb \alpha_i] \big/ \Udot_{\Iblack}[{}^> \wb \alpha_i]$;
\item
There exists a unique canonical basis element, denoted by $b_{\bullet}$, in the based module $\Udot_{\Iblack}[{}^\geq \wb \alpha_i] \big/ \Udot_{\Iblack}[{}^> \wb \alpha_i]$ of weight $\wb \alpha_i -\alpha_i$. Hence, ${\rm pr}(Z_i  \one_{\alpha_i}) = c \cdot b_{\bullet}$, where $c \in \Q(q)^\times$. 
\end{enumerate}
\end{lem}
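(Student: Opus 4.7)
The key preparatory step is to pin down the $X$-grading of $Z_i\in \U_\Iblack^+$. The element $\T_{w_\bullet}(E_i)\in \U^+$ has weight $\wb\alpha_i$; since $i\in\Iwhite$, successive applications of $s_j$ with $j\in\Iblack$ to $\alpha_i$ keep us inside $\alpha_i+\N[\Iblack]$, so $\wb\alpha_i-\alpha_i\in\N[\Iblack]$. The operator $r_i$ lowers the $\alpha_i$-exponent by one, hence $Z_i$ has weight $\wb\alpha_i-\alpha_i$, and $Z_i\one_{\alpha_i}$ has source idempotent $\one_{\alpha_i}$ and target idempotent $\one_{\wb\alpha_i}$. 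For (1), if $\mu\in X_\Iblack^+$ satisfies $\mu-\wb\alpha_i\notin\N[\Iblack]$, then $L_\Iblack(\mu)_{\wb\alpha_i}=0$, so $Z_i\one_{\alpha_i}$ annihilates $L_\Iblack(\mu)$; by Lusztig's characterization \cite[29.1.2]{Lu94} of $\Udot_\Iblack[\geq\wb\alpha_i]$ via annihilation on such ``too small'' simples, we conclude $Z_i\one_{\alpha_i}\in \Udot_\Iblack[\geq\wb\alpha_i]$.

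For (2), I would use the isomorphism \eqref{eq:cell}: ${\rm pr}(Z_i\one_{\alpha_i})$ is the endomorphism of $L_\Iblack(\wb\alpha_i)$ whose only nonvanishing contribution sends the lowest weight vector $\xi_{\alpha_i}$, which spans the $\alpha_i$-weight space, to $Z_i(\xi_{\alpha_i})$. By Lemma~\ref{lem:nonzero} this image is nonzero, hence ${\rm pr}(Z_i\one_{\alpha_i})\neq 0$. For (3), I would invoke Lusztig's description of the canonical basis of the cell quotient \cite[Chapter 29]{Lu94}: it is parameterized by pairs $(b,b')\in \B_\Iblack(\wb\alpha_i)\times \B_\Iblack(\wb\alpha_i)$, graded by $X$-weight $\mathrm{wt}(b)-\mathrm{wt}(b')$. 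All weights of $L_\Iblack(\wb\alpha_i)$ lie simultaneously in $\wb\alpha_i-\N[\Iblack]$ and in $\alpha_i+\N[\Iblack]$, the latter since $\alpha_i$ is the lowest weight (using ${}^\omega L_\Iblack(-\alpha_i)\cong L_\Iblack(\wb\alpha_i)$ and that $\U_\Iblack$ is of finite type). A pair with weight difference $\wb\alpha_i-\alpha_i$ therefore forces $\mathrm{wt}(b)=\wb\alpha_i$ and $\mathrm{wt}(b')=\alpha_i$, and each such extremal weight space is one-dimensional, so $(b,b')=(\eta_{\wb\alpha_i},\xi_{\alpha_i})$ is the unique pair, yielding the unique canonical basis element $b_\bullet$. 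Combined with (2), this forces ${\rm pr}(Z_i\one_{\alpha_i})=c\cdot b_\bullet$ for some $c\in\Q(q)^\times$.

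The main technical hurdle I anticipate is bookkeeping between Lusztig's cell-theoretic conventions and the source/target structure of $\Udot_\Iblack$: specifically, matching the canonical basis of $\Udot_\Iblack[\geq\wb\alpha_i]/\Udot_\Iblack[>\wb\alpha_i]$ with pairs in $\B_\Iblack(\wb\alpha_i)^{\times 2}$ graded by weight differences, and verifying extremal one-dimensionality in this identification. Once this dictionary is in place, (3) reduces to the short weight count sketched above and the positivity of the scalar $c$ is automatic from (2).
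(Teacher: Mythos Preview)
Your proposal is correct and follows essentially the same approach as the paper. For (1) you constrain the target weight $\wb\alpha_i$ to lie in the weight support of $L_{\Iblack}(\mu)$, whereas the paper constrains the source weight $\alpha_i$; both yield $\mu\geq\wb\alpha_i$ and then invoke Lusztig's characterization (the paper cites \cite[Lemma~29.1.3]{Lu94} rather than 29.1.2). Part (2) is identical. For (3) the paper simply notes that the weight-$(\wb\alpha_i-\alpha_i)$ subspace of $\mathrm{End}\big(L_{\Iblack}(\wb\alpha_i)\big)$ is one-dimensional and transports this through the isomorphism \eqref{eq:cell}, while you unpack the same fact via the pair parametrization $(b,b')\in\B_{\Iblack}(\wb\alpha_i)^{\times 2}$ and the extremality argument; these are the same computation viewed from two sides. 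One small remark: the lemma only asserts $c\in\Q(q)^\times$, so your closing mention of ``positivity of the scalar $c$'' is not needed (and not claimed).
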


\begin{proof}
Assume $Z_i \one_{\alpha_i}$ acts on $L(\lambda_2)$, for some $\la_2\in X_{\Iblack}^+$, by a nonzero map. Then we must have $\alpha_i \geq \wb \la_2$, or equivalently, $\la_2 \geq \wb \alpha_i$. Hence by \cite[Lemma~29.1.3]{Lu94}, we have $Z_i \one_{\alpha_i} \in \Udot_{\Iblack} [{}^\geq \wb \alpha_i]$, whence (1). 

By Lemma~\ref{lem:nonzero}, we have $(Z_i  \one_{\alpha_i} )(\xi_{\alpha_i}) \neq 0$, and it follows by \eqref{eq:cell} that ${\rm pr}(Z_i  \one_{\alpha_i}) \neq 0$, whence (2). 

Since the weight subspace of  $\text{End} \big(L(\wb \alpha_i) \big)$  of weight $\wb \alpha_i -\alpha_i$ is clearly one-dimensional, by the isomorphism in \eqref{eq:cell} we see that $\Udot_{\Iblack}[{}^\geq \wb \alpha_i] \big/ \Udot_{\Iblack}[{}^> \wb \alpha_i]$  has a one-dimensional weight subspace of weight $\wb \alpha_i -\alpha_i$, and (3) follows. 
\end{proof}

Now we are ready to prove the Fundamental Lemma of QSP. 

\begin{proof} [Proof of Theorem~\ref{thm:nu=1}]
The anti-involution $\sigma$ sends $\Udot_{\Iblack}[{}^\geq \la]$ onto $\Udot_{\Iblack}[{}^\geq -\wb \la]$, for any $\la \in X_{\Iblack}^+$, according to \cite[Lemma 29.3.1]{Lu94}. By the admissible condition (2) in \S\ref{subsec:adm}, the action of the diagram involution $\tau$ on $\I_{\bullet}$ coincides with the action of $-w_{\bullet}$, and thus $\tau$ induces an isomorphism $\Udot_{\Iblack}[{}^\geq -\wb\la] \rightarrow \Udot_{\Iblack}[{}^\geq \la]$, for any $\la \in X_{\Iblack}^+$. 

Hence, the composition $\sigma\tau$ preserves $\Udot_{\Iblack}[{}^\geq \la]$, for each $\la \in X_{\Iblack}^+$. It follows from \cite[Theorem~4.3.2]{Ka94} that $\sigma$ preserves the canonical basis of $\Udot_{\Iblack}$. The diagram automorphism $\tau$ preserve the canonical basis of $\Udot_{\Iblack}$ as well. Hence the canonical basis of $\Udot_{\Iblack}[{}^\geq \la]$ is stable under the action of $\sigma\tau$. 

 Therefore, by Lemma~\ref{lem:cell}(3), we must have $\sigma\tau(b_{\bullet})=b_{\bullet}$ since $\sigma\tau$ is weight-preserving. 

Hence, we have ${\rm pr} \big(\sigma\tau (Z_i \one_{\alpha_i}) \big) = \sigma\tau \big({\rm pr} (Z_i \one_{\alpha_i}) \big) =\sigma\tau  (c\cdot b_{\bullet}) =c\cdot b_{\bullet}$. 
On the other hand, by \eqref{eq:nu}, we have ${\rm pr} \big(\sigma\tau (Z_i \one_{\alpha_i}) \big) = {\rm pr} (\nu_i \cdot Z_i \one_{\alpha_i}) =\nu_i c\cdot b_{\bullet}$. By comparison we conclude that $\nu_i=1$.
\end{proof}

\section{The $\imath$divided powers}
 \label{sec:iDP}

\subsection{}
This section is devoted to a constructive proof of the existence of the so-called $\imath$divided powers. 

\begin{thm}\label{thm:iDP}
For any $i \in \I$ and $\zeta \in X_\imath$, there exists an element $B^{(n)}_{i, \zeta} \in {}_{\mA} \Uidot \one_{\zeta}$ satisfying the following 2 properties:
	\begin{enumerate}
		\item 	$\ipsi (B^{(n)}_{i, \zeta})=B^{(n)}_{i, \zeta}$;
		\item 	$ B^{(n)}_{i, \zeta} \one_{\lambda} = F^{(n)}_i \one_{\lambda} +\sum_{a < n}F^{(a)}_i {}_\mA \U^+\one_{\lambda} $, for $\one_{\lambda} \in {}_\mA\Uidot$ with $\overline{\lambda} =\zeta$.
	\end{enumerate}
\end{thm}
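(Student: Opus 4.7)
The plan is to reduce Theorem~\ref{thm:iDP} to a case analysis based on the position of $i \in \I$ with respect to the Satake diagram. Writing $\wb = -w_\bullet \circ \tau$, four cases arise: (i) $i \in \Iblack$; (ii) $i \in \Iwhite$ with $\tau i \neq i$; (iii) $i \in \Iwhite$ with $\wb i = i = \tau i$; and (iv) $i \in \Iwhite$ with $\wb i \neq i = \tau i$. For (i)--(iii) the elements are either trivial or essentially available in earlier work; case (iv) is the main new construction. I will use freely the bar involution $\ipsi$ on $\Ui$ provided by Lemma~\ref{lem:bar} under the basic assumption \eqref{eq:aij}.

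For case (i), take $B^{(n)}_{i,\zeta} := F_i^{(n)} \one_\zeta$; all three required properties are immediate. For case (ii), the constraint $\tau i \neq i$ forces $\kappa_i = 0$ by \eqref{kappa}, hence $\ff_i = F_i + \vs_i \T_{w_\bullet}(E_{\tau i})\tK_i^{-1}$, and I take $B^{(n)}_{i,\zeta} := \ff_i^n \one_\zeta / [n]_i^!$. Bar invariance follows from $\ipsi(\ff_i) = \ff_i$; the leading-term property holds by construction; integrality reduces to the fact that the commutator $[F_i, \T_{w_\bullet}(E_{\tau i})\tK_i^{-1}]$ produces no $F_i$-feedback (because $\tau i \neq i$), so a triangular expansion of $\ff_i^n$ absorbs the denominator $[n]_i^!$. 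For case (iii), I would recall the parity-dependent closed-form expressions from \cite{BeW18}, which split according to the parity of $n$ and of $\langle i, \zeta\rangle$ (well-defined on $X_\imath$ since $\wb i = i$), and verify the three properties directly.

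The crux is case (iv), where I would write down an explicit, universal formula of the shape
\[
B^{(n)}_{i,\zeta} = F_i^{(n)} \one_\zeta + \sum_{0 < a \le n} F_i^{(n-a)}\, X_{n,a}(\zeta)\, \one_\zeta ,
\]
with $X_{n,a}(\zeta) \in {}_\mA\U^+$ an explicit polynomial in $\T_{w_\bullet}(E_i)\tK_i^{-1}$ and in the generators of $\U^+$ associated to $\Iblack$, depending on $n$, $\zeta$, and the parameters $\vs_i, \kappa_i$. The coefficients are to be pinned down by two constraints: bar invariance under $\ipsi$ and membership in ${}_\mA\Uidot$. A natural source of the candidate is the intertwining identity $\ipsi = \Upsilon \circ \psi$ from Theorem~\ref{thm:Upsilon}: applying it to the bar-invariant element $F_i^{(n)} \one_\zeta \in {}_\mA\U^-$ and pulling back along the projection \eqref{eq:comp3} supplies a natural ansatz for the $X_{n,a}$; one then verifies bar invariance and integrality by induction on $n$.

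The main obstacle in case (iv) is producing a single closed-form that is simultaneously bar-invariant, integral, and has the prescribed leading term, valid uniformly in $n$ and $\zeta$. Unlike in case (ii), here the commutator $[F_i, \T_{w_\bullet}(E_i)\tK_i^{-1}]$ genuinely reintroduces $F_i$-type terms (because $\tau i = i$), so the recursion couples different weights $\zeta$; unlike in case (iii), the action of $w_\bullet$ on $\alpha_i$ is non-trivial, so $\T_{w_\bullet}(E_i)$ is a genuinely higher-weight element of $\U^+$ rather than a rescaled simple generator. The delicate step in the induction is a family of $q$-binomial identities controlling how the $F_i$-feedback assembles back into divided powers with coefficients in $\mA$; this is the new input absent from the earlier cases.
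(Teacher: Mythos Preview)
Your case split matches the paper, and your treatments of (i)--(iii) align with \S\ref{subsec:rem} and the remarks following Theorem~\ref{thm:iDP}.

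The gap is in case (iv). Your claim that the commutator $[F_i, \T_{w_{\bullet}}(E_i)\tK_i^{-1}]$ ``genuinely reintroduces $F_i$-type terms'' is incorrect, and this misreading hides the structure that actually makes the case work. Writing $Y_i = \vs_i \T_{w_{\bullet}}(E_i)\tK_i^{-1}$, one has $F_i Y_i - q_i^{-2} Y_i F_i = \fZ_i$ with $\fZ_i \in \U_{\Iblack}^+$ commuting with both $F_i$ and $Y_i$ (Lemmas~\ref{lem:FiEi} and~\ref{lem:q-H}). So there is no $F_i$-feedback: the subalgebra generated by $F_i, Y_i, \fZ_i$ is a $q$-boson algebra with $\fZ_i$ central. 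In particular nothing couples different weights $\zeta$; the element $B_i^{(n)}$ in \eqref{eq:Bbn} is independent of $\zeta$, and one simply sets $B_{i,\zeta}^{(n)} = B_i^{(n)}\one_\zeta$ (Proposition~\ref{prop:Bi}).

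The actual work is: (a) show $\fZ_i^{(n)} \in {}_\mA\U$ via the $q$-boson identity \eqref{eq:zn} (Proposition~\ref{prop:Zint}); (b) show $\fZ_i/(q-q^{-1}) \in {}_\mA\U$ by a bar-eigenvalue argument for $Z_i$ that relies on the Fundamental Lemma, Theorem~\ref{thm:nu=1} (Lemma~\ref{lem:ZA}); (c) set $b_i^{(n)} = \sum_a q_i^{-a(n-a)} Y_i^{(a)} F_i^{(n-a)}$, establish the recursion \eqref{eq:bbb}, and compute $\ipsi(b_i^{(n)})$ explicitly as a finite sum $\sum_{k\ge 0} q_i^{k(k+1)/2}\fZ_i^{(k)} b_i^{(n-2k)}$ (Lemma~\ref{lem:bnbar}); (d) take $B_i^{(n)} = (q\,\ipsi(b_i^{(n)}) - q^{-1} b_i^{(n)})/(q-q^{-1})$, which is manifestly $\ipsi$-invariant, and combine (a)--(c) with Lemma~\ref{lem:fg} for integrality. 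Your proposal of extracting an ansatz from $\Upsilon$ supplies none of these steps, and moreover the integrality of $\Upsilon$ on modules (Corollary~\ref{cor:Zform}) is established in the paper \emph{using} Theorem~\ref{thm:iDP}, so even a heuristic appeal to $\Upsilon$ here risks circularity.
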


When $i \in \Iblack$, we can simply set $B^{(n)}_{i, \zeta}= F_i^{(n)} \one_{\zeta}$.   

We shall explicitly construct the elements $B^{(n)}_{i, \zeta} \in {}_{\mA} \Uidot \one_{\zeta}$ with the desired properties in Theorem~\ref{thm:iDP} for $i \in \Iwhite$ by separating the real rank one into 3 classes (this is a much rougher division than \cite{BW18a}, where 8 cases of $\imath$quantum groups of finite type of real rank one are enumerated). Two classes which are essentially known from  \cite{BW18a, BW18b} are treated in Subsection~\ref{subsec:rem}. Most of this section (\S\ref{subsec:boson}--\S\ref{subsec:Bin}) deals with the most nontrivial (potentially non-finite type) class when $\tau (i)=i\neq \wb i$ (Theorem~\ref{thm:iDP} in this case is summarized below as Theorem~\ref{thm:Bi}); the formula for this class is new even in finite type and allows a major simplification of \cite{BW18b}. 

\begin{rem}
The elements $B^{(n)}_{i, \zeta} \in {}_{\mA} \Uidot \one_{\zeta}$ in Theorem~\ref{thm:iDP} will be called {\em $\imath$divided powers}. The element $B^{(n)}_{i, \zeta}$ should be regarded as a leading term of the $\imath$canonical basis element $(1 \Diamond^\imath_{\zeta} F_i^{(n)})$ established in Theorem~\ref{thm:iCBUi}, since the $\imath$canonical basis is hard to compute and the equality $(1 \Diamond^\imath_{\zeta} F_i^{(n)}) = B^{(n)}_{i, \zeta}$ remains uncertain in general.
\end{rem}
We also refer to $E_i^{(n)} \one_{\zeta}$, for $i\in \Iblack$, as $\imath$divided powers.

\subsection{A $q$-boson algebra}
  \label{subsec:boson}

Let $i\in \Iwhite$ be such that $\tau (i)=i \neq \wb i$. Then we have $\T_{w_{\bullet}} (E_i)  \neq E_{i}$. 
This implies that $i\in I_{ns}$ in the notation of \cite{Ko14}, and it follows that $\kappa_i=0$ always.

\subsubsection{}

For such an $i \in \Iwhite$, recalling $Z_i$ from \eqref{eq:Zi}, for convenience below we define $\fZ_i = \vs_i Z_i$, that is, 
\begin{align}  \label{eq:fZi}
\begin{split}
\fZ_i & = \frac{\vs_i}{q_i^{-1} -q_i}  \  r_i \big(\T_{w_{\bullet}} (E_i) \big) \in \U_{\Iblack}^+.
\end{split}
\end{align} 

\begin{rem}
An element ${\CMcal Z}_i =- s(\tau(i))   r_i \big(\T_{w_{\bullet}} (E_i) \big)$ (in case $\tau=\id$) was introduced and studied in depth in \cite{Ko14} (see \cite[(3.10)]{BK15}). 
Our $\fZ_i$ is related to ${\CMcal Z}_i$ by 
\begin{equation}  \label{cZfZ}
\fZ_i = \frac1{q_i^{-1} -q_i} c_i {\CMcal Z}_i. 
\end{equation}
Indeed, for the theory of QSP throughout \cite{BK15} and this paper one only needs to use $Z_i$ instead of ${\CMcal Z}_i$. 
\end{rem}

\begin{lem} 
 \label{lem:FiEi}
Let $i \in \Iwhite$ be such that $\tau (i)=i \neq \wb i$.  We have
\begin{enumerate}
\item
$[F_i,  \vs_i \T_{w_{\bullet}} (E_i)] = \fZ_i \tK_i;$

\item
${}_ir \big(\T_{w_{\bullet}} (E_i) \big)=0$; 

\item
$\T_i \T_{w_{\bullet}} (E_i) \in \U^+$. 
\end{enumerate}
\end{lem}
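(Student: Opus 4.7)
The plan is to prove the three parts in the order (3), (2), (1). Part (3) is a direct Weyl-group/braid computation, part (2) then follows from (3) via a standard criterion of Lusztig, and part (1) is an immediate consequence of (2) together with the skew-derivation commutator formula.

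For (3), write $\beta = w_\bullet(i') - i' \in X$. Since $w_\bullet \in W_\bullet$ and $i \in \Iwhite$, the root $w_\bullet(i')$ is positive with $i'$-coefficient equal to $1$ and remaining coefficients in $\Z_{\ge 0}[\Iblack]$; hence $\beta \in \sum_{j \in \Iblack} \Z_{\ge 0}\, j'$. The hypothesis $\wb i \neq i$ forces $\beta \neq 0$, and because $i'$ and $w_\bullet(i')$ are distinct roots in the same $W$-orbit (hence of the same length), one has $\langle i, w_\bullet(i') \rangle \le 1$, i.e.\ $\langle i, \beta \rangle \le -1$. Moreover $w_\bullet(i') > 0$ gives $\ell(s_i w_\bullet) = \ell(w_\bullet) + 1$, so the braid relation yields $\T_i \T_{w_\bullet} = \T_{s_i w_\bullet}$. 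Finally,
\[
s_i w_\bullet(i') \;=\; -\bigl(1 + \langle i, \beta \rangle\bigr)\, i' + \beta
\]
has non-negative $i'$-coefficient and non-negative coefficients on the $j'$ for $j \in \Iblack$, so it is a positive root; therefore $\T_i \T_{w_\bullet}(E_i) = \T_{s_i w_\bullet}(E_i) \in \U^+$.

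For (2), I would invoke the standard fact of Lusztig \cite[\S38]{Lu94}: for any $y \in \U^+$ one has $\T_i(y) \in \U^+$ if and only if ${}_ir(y) = 0$. Indeed, the left decomposition $\U^+ = \bigoplus_{n \ge 0} E_i^{(n)} \cdot \bigl(\U^+ \cap \ker {}_ir\bigr)$ together with $\T_i(E_i) = -F_i \tK_i \notin \U^+$ shows that any $F_i$-contribution comes from the terms with $n \ge 1$, and these cannot cancel by the triangular decomposition of $\U$. Applying this with $y = \T_{w_\bullet}(E_i) \in \U^+$ and using (3), we conclude ${}_ir\bigl(\T_{w_\bullet}(E_i)\bigr) = 0$.

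For (1), recall the skew-derivation commutator formula (a variant of \cite[3.1.6]{Lu94}, transferred to $\U^+$ via the involution $\omega$): for $y \in \U^+$,
\[
[F_i,\,y] \;=\; \frac{\tK_i^{-1}\, {}_ir(y)\;-\;r_i(y)\,\tK_i}{q_i - q_i^{-1}}.
\]
Setting $y = \T_{w_\bullet}(E_i)$, the first summand vanishes by (2), leaving $[F_i, \T_{w_\bullet}(E_i)] = r_i\bigl(\T_{w_\bullet}(E_i)\bigr)\,\tK_i / (q_i^{-1} - q_i)$. Multiplying by $\vs_i$ and comparing with the definition \eqref{eq:fZi} of $\fZ_i$ gives $[F_i, \vs_i \T_{w_\bullet}(E_i)] = \fZ_i \tK_i$. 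The main obstacle is the length-and-positivity estimate in (3): the inequality $\langle i, \beta \rangle \le -1$ rests crucially on the hypothesis $\wb i \neq i$ (equivalently $\beta \neq 0$), which is precisely what separates this class from the isolated case $\wb i = i = \tau i$ (where $\T_{w_\bullet}(E_i) = E_i$ and the lemma would fail) and what ensures $s_i w_\bullet(i')$ is a positive root.
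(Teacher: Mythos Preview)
Your overall approach coincides with the paper's: establish (3) by a root-positivity argument, deduce (2) from (3) via Lusztig's criterion \cite[Proposition~38.1.6]{Lu94}, and deduce (1) from (2) via the commutator formula \cite[3.1.6]{Lu94}. The paper phrases this as the equivalences (1)$\Leftrightarrow$(2)$\Leftrightarrow$(3) and then proves (3); you run the implications (3)$\Rightarrow$(2)$\Rightarrow$(1), but the substance is the same.

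There is, however, one incorrectly justified step in your argument for (3). You assert that because $i'$ and $w_\bullet(i')$ are distinct real roots in the same $W$-orbit (hence of the same length), one has $\langle i, w_\bullet(i')\rangle \le 1$. This inference is \emph{false} in general Kac-Moody root systems: for example, in affine $\widehat{\mathfrak{sl}}_2$ the real roots $\alpha_0$ and $\gamma = 2\alpha_0 + \alpha_1$ lie in the same $W$-orbit but satisfy $\langle \alpha_0^\vee, \gamma\rangle = 2$. The desired bound $\langle i, w_\bullet(i')\rangle \le 1$ is nonetheless true in the situation at hand, but for a different reason. Since $w_\bullet(i')\in \Phi_+\setminus\{i'\}$, the simple reflection $s_i$ sends it to a positive root (the standard fact that $s_i$ permutes $\Phi_+\setminus\{\alpha_i\}$ in any Kac-Moody root system); equivalently, $s_i w_\bullet(i')$ is a root whose $\Iblack$-coefficients agree with those of $\beta\neq 0$, hence it must be positive, forcing its $i'$-coefficient $1-\langle i, w_\bullet(i')\rangle$ to be $\ge 0$. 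This is precisely how the paper argues, and with this correction your proof goes through.
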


\begin{proof}
Recall \cite[3.1.6]{Lu94}  that, for $x \in \U^+$, 
\[
 F_i x -x F_i = \frac{ r_i(x) \tK_i -\tK_{i}^{-1} \; {}_ir(x)}{q_i^{-1}-q_i}.
\]
The equivalence between (1) and (2)  follows from this. 
On the other hand, by  \cite[Proposition~38.1.6]{Lu94}, (2) and (3) are equivalent.

So it suffices to prove (3). The assumption $\T_{w_{\bullet}} (E_i)  \neq E_{ i}$ is equivalent to that $w_\bullet \alpha_i \neq \alpha_i$, 
an so we have $w_\bullet \alpha_i =\alpha_i + \sum_{j \in \Iblack} k_j \alpha_j\in \Phi_+ \backslash \{\alpha_i\}$,
where $\Phi_+$ denotes the set of positive roots of the Kac-Moody algebra $\g$.
Hence $s_i w_\bullet\alpha_i =w_\bullet \alpha_i -\langle i, w_\bullet \alpha_i \rangle \alpha_i \in \Phi_+$. 
This implies $\T_i \T_{w_\bullet} (E_i) =\T_{s_i w_\bullet} (E_i) \in \U^+$. 
\end{proof}


%
%
\subsubsection{}
  \label{subsec:boson2}
Set 
\[
Y_i =\vs_i \T_{w_{\bullet}} (E_i) \tK^{-1}_i.
\]

\begin{lem}
 \label{lem:q-H}
Let $i \in \Iwhite$ be such that $\tau (i)=i \neq \wb i$.  Then, 
\begin{enumerate}
\item
$\fZ_i$ commutes with $F_i$,  $Y_i$;  

\item
$
F_i   Y_i -q_i^{-2} Y_i F_i = \fZ_i.
$
\end{enumerate}
\end{lem}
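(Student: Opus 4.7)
The plan is to prove (2) first by a direct computation from Lemma~\ref{lem:FiEi}(1), and then to handle the two commutators in (1) separately; the substantive point will be the commutation $[Y_i,\fZ_i]=0$, which is the main obstacle.

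First I would establish (2). Lemma~\ref{lem:FiEi}(1) reads $[F_i,\vs_i \T_{w_{\bullet}}(E_i)] = \fZ_i \tK_i$. Right-multiplying by $\tK_i^{-1}$ and rearranging yields $F_i Y_i = \vs_i \T_{w_{\bullet}}(E_i)\, F_i \tK_i^{-1} + \fZ_i$. The defining relation $\tK_i F_i = q_i^{-2} F_i \tK_i$, immediate from $\langle i, i' \rangle = 2$, gives $F_i \tK_i^{-1} = q_i^{-2}\tK_i^{-1} F_i$, so the first term on the right equals $q_i^{-2} Y_i F_i$. This is exactly (2).

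Next I would check $[F_i,\fZ_i]=0$. The proof of Lemma~\ref{lem:FiEi}(3) shows $w_\bullet\alpha_i = \alpha_i + \sum_{j\in\Iblack} k_j \alpha_j$, so $\fZ_i$, being proportional to $r_i(\T_{w_{\bullet}}(E_i))$, is weight-homogeneous of weight $\sum_{j\in\Iblack} k_j \alpha_j \in \N[\Iblack]$. Combined with $\fZ_i\in \U^+$, this forces $\fZ_i \in \U^+_{\Iblack}$, i.e.\ $\fZ_i$ is a polynomial in $\{E_j : j\in\Iblack\}$. Since $i\in\Iwhite$ is distinct from every such $j$, the defining cross relation $[F_i,E_j]=0$ propagates to $[F_i,\fZ_i]=0$.

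The main obstacle is $[Y_i,\fZ_i]=0$. Since $\fZ_i$ is weight-homogeneous of weight $\beta:=w_\bullet\alpha_i-\alpha_i\in\N[\Iblack]$, after moving $\tK_i^{-1}$ in $Y_i$ past $\fZ_i$ this reduces to the precise $q$-commutation
\[
\T_{w_{\bullet}}(E_i)\, \fZ_i \;=\; q^{\langle (i\cdot i/2)\, i,\, \beta \rangle}\, \fZ_i\, \T_{w_{\bullet}}(E_i) \qquad \text{in } \U^+.
\]
This identity reflects the structural property that $\T_{w_{\bullet}}(E_i)$ generates, under the adjoint action of $\U_{\Iblack}$, a locally finite module whose partial derivative $r_i(\T_{w_{\bullet}}(E_i))$ is coupled to it by a specific $q$-scalar; it is extracted from Kolb's analysis of these root vectors in \cite{Ko14} (see also \cite{BK15}), or alternatively verified directly by expanding $\T_{w_{\bullet}}(E_i)$ in a PBW basis and applying Lemma~\ref{lem:FiEi} inductively on the height of $\beta$. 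Granting this $q$-commutation, assertion (1) follows.
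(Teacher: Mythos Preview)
Your proof of (2) is correct and matches the paper's argument. Your proof that $[F_i,\fZ_i]=0$ is also fine (and in fact the membership $\fZ_i\in\U^+_{\Iblack}$ is already recorded in \eqref{eq:fZi}, so this step is immediate).

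The gap is in your treatment of $[Y_i,\fZ_i]=0$. Your reduction to the $q$-commutation
\[
\T_{w_{\bullet}}(E_i)\,\fZ_i = q^{(i\cdot i/2)\langle i,\beta\rangle}\,\fZ_i\,\T_{w_{\bullet}}(E_i)
\]
is correct, but you do not prove it. Saying it is ``extracted from Kolb's analysis'' is not a reference to a specific statement, and the proposed PBW induction on $\hgt(\beta)$ is only a sketch with no indication of what the inductive step would actually be. This $q$-commutation is not an obvious identity in $\U^+$, and I do not see a short direct verification along the lines you suggest.

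The paper bypasses this difficulty entirely with a one-line argument you are missing. The relation $[\fZ_i,B_i]=0$ is part of the known presentation of $\Ui$ (see \cite[(3.15)]{BK15} or \cite[(7.7)]{Ko14}); this is the structural input from Kolb's work that is actually available. Since $B_i=F_i+Y_i$, one gets $[\fZ_i,F_i]+[\fZ_i,Y_i]=0$, and these two commutators are weight-homogeneous of \emph{different} weights ($\beta-\alpha_i$ versus $2\beta+\alpha_i$), so each must vanish separately. This gives both halves of (1) at once and avoids the $q$-commutation you were unable to establish directly. Your independent argument for $[F_i,\fZ_i]=0$ is then redundant, though not wrong.
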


\begin{proof}
(1) It follows from the presentation of $\Ui$ that  $[\fZ_i, B_i]=0$ (cf. \cite[({3.15})]{BK15} or  \cite[(7.7)]{Ko14}).
Hence it follows by \eqref{eq:Bi} that 
\[
[\fZ_i, F_i] +[\fZ_i, Y_i]  
 =[\fZ_i, B_i]=0.
\] 
As $[\fZ_i, F_i]$ and $[\fZ_i, Y_i]$ 
have distinct weights,
we must have $[\fZ_i, F_i] =[\fZ_i, Y_i] 
=0$. 

(2) follows since  $F_i Y_i -q_i^{-2} Y_i F_i =[F_i,  \vs_i \T_{w_{\bullet}} (E_i)] \tK^{-1}_i =\fZ_i$.
\end{proof}

We shall focus on two algebras $\He_i$ and $\TT_i$ below. 
\begin{enumerate}
\item[]
$\triangleright$ Denote by $\He_i$ the $\Qq$-subalgebra with 1 of $\U$  generated by $F_i, Y_i, \fZ_i$; we shall call $\He_i$ a {\em $q$-boson algebra}.  
\item[]
$\triangleright$ Denote by $\TT_i$ the $\Qq$-subalgebra with 1 of $\He_i$ generated by $B_i =F_i +Y_i$ and $\fZ_i$. 
\end{enumerate}
Clearly $\TT_i$ is also a $\Qq$-subalgebra of $\Ui$, and the bar map $\ipsi$ on $\Ui$ preserves the algebra $\TT_i$ thanks to \cite[Theorem~3.11]{BK15}. The algebras $\He_i$ and $\TT_i$ contains various integral elements of interest.

\subsection{Integral elements}

We continue to assume $i\in \Iwhite$  such that $\tau (i)=i \neq \wb i$.

\subsubsection{}
Denote $\fZ_i^{(n)} =\fZ_i^n/ [n]_i! $ and $Y_i^{(n)} = Y_i^n /  [n]_i! $, for $i \in \Iwhite$.  

\begin{prop}  \label{prop:Zint}
We have $Y_i^{(n)}  \in \aA \U$ and $\fZ_i^{(n)}  \in \aA \U_{\Iblack}$.
\end{prop}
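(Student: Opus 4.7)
The plan is to prove the two assertions separately. For $Y_i^{(n)} \in \aA \U$, the idea is to commute all $\tK_i^{-1}$ factors to the right and invoke the integrality of the braid operators. For $\fZ_i^{(n)} \in \aA \U_\Iblack$, the plan is to derive a $q$-boson-type expansion of $F_i^{(n)} Y_i^{(n)}$ from Lemma~\ref{lem:q-H} and induct on $n$ to extract $\fZ_i^{(n)}$.

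For the first assertion, setting $x = \T_{w_\bullet}(E_i)$ (which has weight $w_\bullet \alpha_i$), repeated use of the commutation $\tK_i^{-1} x = q^{-\langle (i\cdot i/2)\, i,\; w_\bullet \alpha_i \rangle} x \tK_i^{-1}$ yields $Y_i^n = \vs_i^n \, q^{-c(n)} x^n \tK_i^{-n}$ for some integer $c(n)$. Dividing by $[n]_i!$ gives
\[
Y_i^{(n)} = \vs_i^n \, q^{-c(n)} \, \T_{w_\bullet}(E_i^{(n)}) \, \tK_i^{-n}.
\]
By Lusztig's theorem that the braid operators preserve $\aA \U$, $\T_{w_\bullet}(E_i^{(n)}) \in \aA \U$; combined with $\vs_i \in \A$ from \eqref{parameters} and $\tK_i^{-n} \in \aA \U^0$, this gives $Y_i^{(n)} \in \aA \U$.

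For the second assertion, using Lemma~\ref{lem:q-H} (namely $F_i Y_i - q_i^{-2} Y_i F_i = \fZ_i$ with $\fZ_i$ central in $\He_i$), I would first establish by induction on $n$ an identity
\[
F_i^{(n)} Y_i^{(n)} = \sum_{k=0}^n q_i^{c_{n,k}} \, \fZ_i^{(k)} \, Y_i^{(n-k)} \, F_i^{(n-k)}, \qquad c_{n,k} \in \Z,
\]
derived by commuting one $F_i$ past $Y_i^{(n)}$ at a time and collecting terms via $q$-binomial identities; crucially $q_i^{c_{n,n}}$ is then a unit in $\A$. I would then induct on $n$ to prove $\fZ_i^{(n)} \in \aA \U$: the base $n=1$ is $\fZ_i = F_i Y_i - q_i^{-2} Y_i F_i \in \aA \U$; the inductive step solves the identity for the top term, using that every $F_i^{(m)}$ is integral, every $Y_i^{(m)}$ is integral by the first part, and every $\fZ_i^{(k)}$ for $k < n$ is integral by the inductive hypothesis. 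Since $\fZ_i \in \U_\Iblack^+$ by \eqref{eq:fZi}, the conclusion $\fZ_i^{(n)} \in \aA \U_\Iblack$ follows from the identification $\U_\Iblack \cap \aA \U = \aA \U_\Iblack$.

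The main obstacle is establishing the $q$-boson expansion with integer exponents $c_{n,k}$ and verifying that $q_i^{c_{n,n}}$ is a unit in $\A$; once this identity is in place, the integrality argument is a routine induction on $n$.
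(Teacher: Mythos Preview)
Your proposal is correct and follows essentially the same approach as the paper. For the first assertion the paper does exactly what you describe, writing $Y_i^{(n)} = \vs_i^n q^{*} \T_{w_\bullet}(E_i^{(n)}) \tK_i^{-n}$ for some integer $*$; for the second it establishes precisely the $q$-boson expansion you outline, namely
\[
x^{(n)} y^{(m)} = \sum_{a=0}^n q_i^{-2nm +(n+m)a -\frac{1}{2} a(a-1)}\, y^{(m-a)} x^{(n-a)} z^{(a)},
\]
(a variant of \cite[(3.1.2)]{Ka91}), from which the top coefficient at $m=n$ is $q_i^{-\frac{1}{2}n(n-1)}$, a unit in $\A$, and the induction you describe goes through. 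So the ``main obstacle'' you flag is resolved by this explicit formula; otherwise your argument matches the paper's line by line.
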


\begin{proof}
Note $Y_i^{(n)} : =\vs_i^k  \frac{ (\T_{w_{\bullet}} (E_i) \tK_i^{-1})^n}{[n]_i!}  =\vs_i^k q^* \T_{w_{\bullet}} (E_i^{(n)} ) \tK_i^{-n} \in \aA \U$, where $*$ denotes a suitable integer.

It remains to show that $\fZ_i^{(n)}  \in \aA \U$,  since this implies that $\fZ_i^{(n)}  \in \U_{\Iblack} \cap \aA \U =\aA \U_{\Iblack}$.

Assume $xy -q_i^{-2}yx=z$ and $z$ commutes with both $x$ and $y$. Denote $x^{(n)} =x^n/ [n]_i!, y^{(n)} =y^n/ [n]_i!$ and $z^{(n)} =z^n/ [n]_i!$. 
Then one checks 
\begin{equation}
  \label{eq:xyz}
x^{(n)} y^{(m)} =
\sum_{a=0}^n q_i^{-2nm +(n+m)a  -\hf a(a-1)} y^{(m-a)} x^{(n-a)} z^{(a)}. 
\end{equation}
This is a variant of \cite[(3.1.2)]{Ka91}, which corresponds to our formula by specializing $z=1$ and $z^{(a)}=[a]_i^{-1}$ for all $a$. 
We rewrite the formula \eqref{eq:xyz} for $m=n$ as
\begin{equation}
  \label{eq:zn}
z^{(n)} = q_i^{\hf n(n-1)} 
\Big(
\sum_{a=0}^{n-1}  q_i^{-2n^2 +2na  -\hf a(a-1)} y^{(n-a)} x^{(n-a)} z^{(a)}
- x^{(n)} y^{(n)}
\Big). 
\end{equation}
By induction on $n$ and Equation \eqref{eq:zn}, we conclude that $z^{(n)} \in \aA \U$ if $x^{(k)}$ and $y^{(k)}$ lie in $\aA \U$, for all $k$. 

The above general formalism is applicable to $x=F_i$, $y=Y_i$, and $z=\fZ_i$, thanks to Lemma~\ref{lem:q-H}(2). 
Therefore, we conclude that $\fZ_i^{(n)} \in \aA \U$. 
\end{proof}

\begin{lem}
 \label{lem:ZA}
We have  $ \frac{Z_i}{q -q^{-1}}  \in \Ui \cap \aA \U$, and 
$ \frac{\fZ_i}{q -q^{-1}}  \in \Ui \cap \aA \U$.
\end{lem}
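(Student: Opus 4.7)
The plan is to reduce both statements to a single integrality claim, $Z_i \in (q - q^{-1}) \aA\U$, and to verify it by a specialization argument at $q = \pm 1$ combined with a structural fact about admissible Satake diagrams. Containment in $\Ui$ is immediate because $Z_i \in \U_{\Iblack}^+ \subset \U_{\Iblack} \subset \Ui$ and $\fZ_i = \vs_i Z_i$. Moreover, since $\vs_i \in \A$, the integrality of $\fZ_i/(q - q^{-1})$ follows from that of $Z_i/(q - q^{-1})$. Setting $Y_i^0 := \T_{w_\bullet}(E_i) \tK_i^{-1}$ and rerunning the derivations of \lemref{lem:FiEi}(1) and \lemref{lem:q-H}(2) without the scalar $\vs_i$ yields
\[
Z_i = F_i Y_i^0 - q_i^{-2} Y_i^0 F_i \in \aA\U,
\]
since $\T_{w_\bullet}$ preserves $\aA\U^+$ and $\tK_i^{-1} \in \aA\U^0$.

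The key step is to pass to the quotient $\aA\U/(q - q^{-1})\aA\U$ over $\A/(q - q^{-1})\A \cong \Z[q]/(q^2 - 1)$. Modulo $(q - q^{-1})$, $q$ satisfies $q^2 = 1$, and since $i \cdot i \in 2\Z$ we get $q_i^{-2} = q^{-i \cdot i} \equiv 1$, so the commutation identity collapses to $\bar Z_i \equiv [F_i, \bar Y_i^0]$. Because $\aA\U$ is $\A$-free via PBW, an element of $\aA\U$ lies in $(q - q^{-1})\aA\U$ if and only if it specializes to $0$ at both $q = 1$ and $q = -1$; it therefore suffices to check $[F_i, Y_i^0]|_{q = \pm 1} = 0$. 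Under either specialization $F_i$ and $Y_i^0$ reduce to classical (up to sign) root vectors in $U(\g)$ of weights $-\alpha_i$ and $w_\bullet(\alpha_i)$, so their commutator has weight
\[
w_\bullet(\alpha_i) - \alpha_i = \sum_{j \in \Iblack} c_j \alpha_j, \qquad c_j \in \N, \text{ not all zero},
\]
the last clause because $\wb i \neq i$.

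The main obstacle is the structural claim that, for our admissible class ($\tau(i) = i$, $\wb i \neq i$), the weight $w_\bullet(\alpha_i) - \alpha_i$ is not a root of $\g$. Once granted, the classical commutator vanishes by weight-space considerations, giving $[F_i, Y_i^0]|_{q = \pm 1} = 0$ and hence $Z_i \in (q - q^{-1})\aA\U$. The structural claim is amenable either to a uniform argument exploiting the admissibility integrality condition $\langle \rho^\vee_\bullet, \alpha_i \rangle \in \Z$---which restricts how $w_\bullet$ can shift $\alpha_i$ within the $\Iblack$-root lattice---or to a short case-by-case inspection of the real rank one Satake diagrams in this class as tabulated in \cite[Table 1]{BW18b}. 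This yields $Z_i/(q - q^{-1}) \in \aA\U$, and the corresponding statement for $\fZ_i/(q - q^{-1}) = \vs_i \cdot Z_i/(q - q^{-1})$ then follows from $\vs_i \in \A$, completing the plan.
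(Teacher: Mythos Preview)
Your approach diverges from the paper's and has genuine gaps that prevent it from going through in the Kac--Moody generality required here.

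\textbf{The structural claim is unproved and the case-by-case option is unavailable.} The heart of your argument is that $w_\bullet(\alpha_i) - \alpha_i$ is never a root. You offer two routes: a uniform argument via the admissibility condition $\langle \rho^\vee_\bullet, \alpha_i \rangle \in \Z$, or a case-by-case check using the finite-type real rank one table. The uniform argument is not actually given---it is only asserted to be ``amenable''---and it is not clear how the integrality of $\langle \rho^\vee_\bullet, \alpha_i \rangle$ alone rules out $w_\bullet(\alpha_i) - \alpha_i$ being a root of $\Phi_\bullet$. The case-by-case option is unavailable in this paper's setting: the lemma is stated and needed for Kac--Moody type, and the introduction explicitly notes that classifying real rank one $\imath$quantum groups in Kac--Moody type is ``probably impossible''. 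So neither route closes the gap.

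\textbf{The specializations at $q = \pm 1$ are not justified.} You assert that under either specialization, $F_i$ and $Y_i^0 = \T_{w_\bullet}(E_i)\tK_i^{-1}$ become root vectors in $U(\g)$, so that their bracket lies in $\g$ and vanishes by weight reasons. But $\aA\U/(q+1)\aA\U$ is not $U(\g)$; the Cartan elements $K_\mu$ do not become central, and there is no evident sense in which $Y_i^0|_{q=-1}$ is a Lie algebra element. Even at $q=1$, while $\T_{w_\bullet}(E_i)|_{q=1}$ being a genuine root vector in $\g$ (rather than a higher-degree element of $U(\n^+)$ of the same weight) is plausible, it requires justification. Without both specializations, you only get $Z_i \in (q-1)\aA\U$, not $(q^2-1)\aA\U$.

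For comparison, the paper's proof avoids all of this: from \cite[Proposition~3.5]{BK15} and Theorem~\ref{thm:nu=1} one gets $\psi(q^m Z_i) = -q^m Z_i$ for a suitable integer $m$; expanding $q^m Z_i \in \aA\U^+$ in the bar-invariant canonical basis forces every coefficient $d$ to satisfy $\overline{d} = -d$, whence $d \in (q-q^{-1})\A$. This works uniformly in Kac--Moody type with no specialization or root-system casework.
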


\begin{proof}
If suffices to prove the first statement as $\fZ_i =\vs_i Z_i$. 
By \cite[Proposition 3.5]{BK15}, (recalling $\CMcal Z_i =- s(\tau(i))   r_i \big(\T_{w_{\bullet}} (E_i) \big)$), 
we have 
\begin{equation}
  \label{eq:bZi}
 \psi(r_i (\T_{w_{\bullet}} (E_i) ) ) = \nu_i \ell_i r_i \big(\T_{w_{\bullet}} (E_i) \big)  = \ell_i r_i \big(\T_{w_{\bullet}} (E_i) \big),
 \end{equation} 
where $\nu_i=1$ thanks to Theorem~\ref{thm:nu=1}. By definition \eqref{eq:Zi}, 
$Z_i = \frac{1}{q_i^{-1} -q_i}  \  r_i \big(\T_{w_{\bullet}} (E_i) \big)$, and hence
\begin{equation}
  \label{eq:bZi2}
 \psi(Z_i) = - \ell_i  Z_i. 
 \end{equation} 
We recall $\ell_i =q^{(\alpha_i, \alpha_i -\wb \alpha_i -2 \rho_\bullet)} =q_i^{2 -\langle i, 2\rho_\bullet +\wb i' \rangle}$ is always an even power of $q$ by \cite[Remark 3.14]{BK15}. Set $m =(\alpha_i, \alpha_i -\wb \alpha_i -2 \rho_\bullet)/2 \in \Z$,and $Z'_i =q^m Z_i$. Then 
\[
\psi (Z'_i) =  -  Z'_i.
\]

As $Z'_i \in \aA \U^+$, we write $Z'_i =\sum_{k} d_k b_k$, a finite  $\A$-linear combination of canonical basis elements $b_k$, where $d_k \in \A$. It follows by $\psi (Z'_i) =  -  Z'_i$ that $\sum_{k} \ov{d_k} b_k = -\sum_{k} d_k b_k$. Therefore, $\ov{d_k} =- d_k$ for each $k$. Writing $d_k = \sum_{n\in \Z} a_{k;n} q^n$ with $a_{k;n}\in\Z$, we conclude that $a_{k;-n} =-a_{k;n}$ for all $n$. Hence $d_k = \sum_{n>0} a_{k;n} (q^n-q^{-n})$, and $\frac{d_k}{q -q^{-1}}  \in \A$. The lemma is proved. 
\end{proof}

 

%

\begin{example}
 \label{BII n=2}
Consider Satake diagram of type $BII$ with rank 2:
\[
   	\begin{tikzpicture}[baseline=0, scale=1.5]  
		\node at (-1.5,0) {$\circ$};
		\draw[-implies, double equal sign distance]  (-1.45, 0) to (-0.85, 0);
		\node at (-0.8,0) {$\bullet$};
		\node at (-1.5,.2) {\small 1};
		\node at (-0.8,.2) {\small 2};
	\end{tikzpicture}
\]
We have 
$
\imath( B_1) =F_1  +\vs_1  \T_{2} (E_1) \tK_1^{-1}.
$
Recall
$
\T_{2} (E_1)  =E_2^{(2)} E_1  -q^{-1}E_2 E_1 E_2 +q^{-2}  E_1 E_2^{(2)}.
$ 
A direct computation shows that 
$ 
[F_1, \T_{2} (E_1)] = (q^{-4}-q^{-2}) E_2^{(2)}  \tK_1,
$ 
Hence we have by Lemma~\ref{lem:FiEi} that  
\[
Z_1 =(q^{-4}-q^{-2}) E_2^{(2)},
\qquad
\frac{Z_1}{q-q^{-1}} =-q^{-3} E_2^{(2)} \in\aA \U_{\Iblack}.
\] 
\end{example}

\subsubsection{}
  
For $n\ge 0$, we define
\begin{align} 
  \label{eq:bi}
b_i^{(n)}  = \sum_{a=0}^n  q_i^{-a(n-a)}  Y_i^{(a)}F_i^{(n-a)} \in \aA \U. 
\end{align}

\begin{lem}
  \label{lem:bnAi} 
    We have $b_i^{(n)} \in \Ui \cap \aA \U$, for $i \in \Iwhite.$
\end{lem}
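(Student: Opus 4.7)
The plan is to verify the two conclusions $b_i^{(n)} \in \aA\U$ and $b_i^{(n)} \in \Ui$ separately. Integrality is immediate from the definition \eqref{eq:bi}: by Proposition~\ref{prop:Zint} each $Y_i^{(a)}$ lies in $\aA\U$, each $F_i^{(n-a)}$ lies in $\aA\U$ by construction, and every coefficient $q_i^{-a(n-a)}$ belongs to $\mA$, so the finite sum defining $b_i^{(n)}$ is integral.

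The substantive task is to prove $b_i^{(n)} \in \Ui$. Recall from \S\ref{subsec:boson2} that the subalgebra $\TT_i$ generated by $B_i$ and $\fZ_i$ is already a $\Qq$-subalgebra of $\Ui$, so it suffices to show $b_i^{(n)} \in \TT_i$. My plan is to do this by induction on $n$ via the recursion
\[
B_i\, b_i^{(n)} \;=\; [n+1]_i\, b_i^{(n+1)} + q_i^{1-n}\, b_i^{(n-1)}\, \fZ_i.
\]
Granting this, the base cases $b_i^{(0)} = 1$ and $b_i^{(1)} = B_i$ manifestly lie in $\TT_i$. For $n \ge 1$, invertibility of $[n+1]_i$ in $\Qq$ lets one solve the recursion for $b_i^{(n+1)}$ as a $\Qq$-linear combination of products of elements already known to be in $\TT_i$ by the inductive hypothesis on $b_i^{(n)}$ and $b_i^{(n-1)}$, completing the step.

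To establish the recursion, I would use the $n=1$ case of \eqref{eq:xyz}, namely
\[
F_i\, Y_i^{(m)} \;=\; q_i^{-2m} Y_i^{(m)} F_i + q_i^{1-m} Y_i^{(m-1)} \fZ_i,
\]
together with the commutativity of $\fZ_i$ with $F_i$ from Lemma~\ref{lem:q-H}(1), to move $F_i$ past each $Y_i^{(a)}$ in $F_i\, b_i^{(n)}$; the identity $Y_i\, Y_i^{(a)} = [a+1]_i Y_i^{(a+1)}$ handles the $Y_i$ part of $B_i = F_i + Y_i$. Expanding $B_i\, b_i^{(n)}$ this way and reindexing to collect the coefficient of each $Y_i^{(b)} F_i^{(n+1-b)}$, the $q$-Pascal identity $[n+1]_i = q_i^{-b}[n+1-b]_i + q_i^{n+1-b}[b]_i$ collapses that coefficient precisely to $[n+1]_i q_i^{-b(n+1-b)}$, which assembles into the $[n+1]_i\, b_i^{(n+1)}$ summand; a parallel reindexing of the residual $\fZ_i$-terms yields $q_i^{1-n}\, b_i^{(n-1)}\, \fZ_i$. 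The main obstacle is purely the bookkeeping of $q$-power exponents, and a sanity check at $n=1$ (where $B_i^2 = [2]_i b_i^{(2)} + \fZ_i$ follows from $F_i Y_i = q_i^{-2} Y_i F_i + \fZ_i$) confirms no conceptual difficulty arises.
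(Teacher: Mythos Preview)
Your proposal is correct and follows essentially the same approach as the paper: an induction on $n$ via a three-term recursion expressing $b_i^{(n+1)}$ in terms of $b_i^{(n)}$, $b_i^{(n-1)}$, $B_i$, and $\fZ_i$. The only cosmetic difference is that the paper multiplies by $B_i$ on the right (obtaining $b_i^{(n-1)} B_i = [n]_i b_i^{(n)} + q_i^{2-n} \fZ_i b_i^{(n-2)}$ and using $F_i^{(m)} Y_i - q_i^{-2m} Y_i F_i^{(m)} = q_i^{1-m} \fZ_i F_i^{(m-1)}$), whereas you multiply on the left and use the mirror identity $F_i Y_i^{(m)} = q_i^{-2m} Y_i^{(m)} F_i + q_i^{1-m} Y_i^{(m-1)} \fZ_i$; the two recursions are equivalent.
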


\begin{proof}
We know by definition \eqref{eq:bi} that $b_i^{(n)} \in \aA \U$. It remains to show that  $b_i^{(n)} \in \Ui$ by induction on $n$. The statement is trivial for $n=1$ as $b_i^{(1)}=B_i$.
It suffices to prove the following inductive formula for $n\ge 2$: \begin{align}
[n]_i b_i^{(n)} = b_i^{(n-1)} B_i - q_i^{2-n} \fZ_i b_i^{(n-2)}, 
\label{eq:bbb}
\end{align}
thanks to $B_i, \fZ_i \in \Ui$. 

It follows by Lemma~\ref{lem:q-H} and an induction on $m$ that 
\begin{equation}  \label{FmY}
F_i^{(m)}  Y_i -q_i^{-2m} Y_i F_i^{(m)} = q_i^{1-m} \fZ_iF_i^{(m-1)}.
\end{equation}

Recalling $B_i=F_i+Y_i$ and using \eqref{FmY} with $m=n-a-1$, we have
\begin{align*}
b_i^{(n-1)} B_i &= \sum_{a=0}^{n-1}  q_i^{-a(n-a-1)}  Y_i^{(a)}F_i^{(n-a-1)} B_i 
\\
&= \sum_{a=0}^{n-1}  q_i^{-a(n-a-1)} [n-a]_i Y_i^{(a)}F_i^{(n-a)}  
\\
&\qquad \quad+ \sum_{a=0}^{n-1}  q_i^{-a(n-a-1)}  Y_i^{(a)} 
\big(q_i^{2+2a-2n} Y_i F_i^{(n-a-1)} + q_i^{2+a-n} \fZ_iF_i^{(n-a-2)} \big)
\\
&\stackrel{(\star)}{=} \sum_{a=0}^n \big(q_i^{-a(n-a-1)}[n-a]_i + q_i^{-(a+1)(n-a)} [a]_i \big) Y_i^{(a)} F_i^{(n-a)} 
\\
&\qquad\quad + q_i^{2-n} \fZ_i \sum_{a=0}^{n-2}  q_i^{-a(n-a-2)}  Y_i^{(a)}F_i^{(n-a-2)}
\\
&= \sum_{a=0}^n [n]_i  q_i^{-a(n-a)} Y_i^{(a)} F_i^{(n-a)} 
 + q_i^{2-n} \fZ_i \sum_{a=0}^{n-2}  q_i^{-a(n-a-2)}  Y_i^{(a)}F_i^{(n-a-2)}
 \\
 &=q_i^{2-n} \fZ_i b_i^{(n-2)}  + [n]_i b_i^{(n)}.
\end{align*}
To obtain the equality $(\star)$ above, we have shifted the index of the (first half of) the second summand from $a$ to $a-1$. 
This proves the lemma. 
\end{proof}

\subsubsection{}

Recall the $\Q(q)$-subalgebra $\TT_i$ of $\Ui \cap \He_i$ from \S\ref{subsec:boson2}. 
It follows by induction on $n$ using \eqref{eq:bbb} that $b_i^{(n)} \in \TT_i$ for all $n$. 
Denote by $\TA_i$ the $\A$-subalgebra (with 1) of $\TT_i$ generated by $\fZ_i^{(n)}, b_i^{(n)}$, for all $n\ge 1$. 
It follows by \eqref{eq:bi} and Proposition~\ref{prop:Zint} that  $\TA_i \subset \aA \U$. 

\begin{prop}  \label{prop:bnbar}
We have   
\begin{align*}
\ipsi(b_i^{(n)}) 
&=   \sum_{k \ge 0}  q_i^{\frac{k(k+1)}{2}} \fZ_i^{(k)} b_i^{(n-2k)}.
\end{align*}
In particular, we have $\ipsi(b_i^{(n)})  \in \Ui \cap \TA_i\subset \Ui \cap \aA \U$.
%
\end{prop}

\begin{proof}
By Lemma~\ref{lem:bnAi}, the integrality statement for $\ipsi(b_i^{(n)})$ follows from the explicit formula in the lemma. 

To prove the formula, we proceed by induction on $n$. The case $n=1$ is clear as $b_i^{(1)} =B_i$ is $\ipsi$-invariant.

By \cite[Theorem~3.11(2)]{BK15}, we have $\ipsi (c_i {\CMcal Z}_i)  =q^{(i, \tau i)} c_{\tau i} {\CMcal Z}_{\tau i}$; 
this equality is transformed via \eqref{cZfZ} to be 
\begin{equation}
 \label{Zbar}
\ipsi (\fZ_i) =  \psi (\fZ_i) =  
- q_i^2 \fZ_i,
\end{equation}
thanks to $\fZ_i \in \U_{\Iblack}^+$.

Assume the statement holds for the cases of $\ipsi(b_i^{(k)})$ with $k\le n$, 
and we shall prove the formula for $\ipsi(b_i^{(n+1)})$.
By \eqref{eq:bbb}, we have 
\[
[n+1]_i b_i^{(n+1)}  =b_i^{(n)}B_i -q_i^{1-n} \fZ_i b_i^{(n-1)}.
\]
Applying the bar map $\ipsi$ to the above identity and using inductive assumptions on $\ipsi(b_i^{(n)})$ and $\ipsi(b_i^{(n-1)})$, we have

\begin{align*}
&[n+1]_i  \ipsi(b_i^{(n+1)})
\\
&\stackrel{\eqref{Zbar}}{=}   \ipsi(b_i^{(n)})B_i  +q_i^{n-1} q_i^{2} \fZ_i   \ipsi(b_i^{(n-1)})
\\
&=   \sum_{k \ge 0}    q_i^{\frac{k(k+1)}{2}} \fZ_i^{(k)} b_i^{(n-2k)} B_i
 +q_i^{n+1}  \sum_{k \ge 0}    q_i^{\frac{k(k+1)}{2}}  \fZ_i \fZ_i^{(k)} b_i^{(n-1-2k)}
\\
&\stackrel{(\star)}{=}   \sum_{k \ge 0}  q_i^{\frac{k(k+1)}{2}} 
 \Big ([n+1-2k]_i \fZ_i^{(k)}b_i^{(n+1-2k)} +q_i^{1+2k-n} [k+1]_i\fZ_i^{(k+1)} b_i^{(n-1-2k)} \Big)
\\&\qquad
 +q_i^{n+1}   \sum_{k \ge 0} q_i^{\frac{k(k+1)}{2}}  [k+1]_i \fZ_i^{(k+1)} b_i^{(n-1-2k)}
\\
&=   \sum_{k \ge 0}   q_i^{\frac{k(k+1)}{2}}  \Big ([n+1-2k]_i +q_i^{-k}
(q_i^{2k-1-n} +q_i^{n+1}) [k]_i \Big)  \fZ_i^{(k)} b_i^{(n+1-2k)} 
\\
&=[n+1]_i \sum_{k \ge 0} q_i^{\frac{k(k+1)}{2}} \fZ_i^{(k)} b_i^{(n+1-2k)}.
\end{align*}
In the identity ($\star$) above, we have used \eqref{eq:bbb} for $b_i^{(n-2k)} B_i$. 
The proof is completed. 
\end{proof}

\subsection{The $\imath$divided powers $B_i^{(n)}$}
  \label{subsec:Bin}

We continue to assume $i\in \Iwhite$ such that $\tau (i)=i \neq \wb i$.
We define
\begin{align}
B_i^{(n)} =\frac{q \ipsi( b_i^{(n)}) -q^{-1}  b_i^{(n)}}{q -q^{-1}}.
\label{eq:Bbn}
\end{align}

It follows from Proposition~\ref{prop:bnbar} that
\begin{align}
B_i^{(n)} = {b}_i^{(n)}  + \frac{q}{q -q^{-1}} \sum_{k \ge 1}  q_i^{\frac{k(k+1)}{2}} \fZ_i^{(k)} b_i^{(n-2k)}.
\label{eq:Bbn2}
\end{align}

\begin{lem}  \label{lem:fg}
Let $f, g \in \A$ be relatively prime.
Assume $u \in \U$ satisfies that $u/f, u/g \in \aA \U$. Then we have $u/(fg) \in \aA \U$. 
\end{lem}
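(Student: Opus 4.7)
The plan is to reduce the assertion to an elementary divisibility statement in the coefficient ring $\A=\Z[q,q^{-1}]$, exploiting the fact that the integral form $\aA\U$ is a \emph{free} $\A$-submodule of $\U$ with $\U=\Qq\otimes_\A \aA\U$.

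First, I would fix an $\A$-basis $\{b_\alpha\}_{\alpha\in \mathcal B}$ of $\aA\U$ which is simultaneously a $\Qq$-basis of $\U$; for instance, Lusztig's canonical basis $\Bdot$ of $\Udot$, or a PBW basis in the finite-rank weight components, will do. Writing the uniquely determined expansion $u=\sum_\alpha c_\alpha b_\alpha$ with $c_\alpha\in \Qq$, the hypotheses $u/f,\ u/g\in \aA\U$ translate, after comparing coefficients in the free $\A$-module $\aA\U$, into the coefficient-wise divisibility statements $c_\alpha/f\in \A$ and $c_\alpha/g\in \A$ for every $\alpha$.

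Next, I would invoke the ring-theoretic fact that $\A=\Z[q,q^{-1}]$ is a unique factorization domain, being a localization of the UFD $\Z[q]$. In any UFD, if $f$ and $g$ are coprime, then $f\A\cap g\A=fg\A$; equivalently, any element of the fraction field divisible by both $f$ and $g$ is automatically divisible by $fg$. Applying this to each $c_\alpha$ yields $c_\alpha/(fg)\in \A$ for every $\alpha$.

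Assembling these back in the free $\A$-module $\aA\U$, we obtain $u/(fg)=\sum_\alpha \bigl(c_\alpha/(fg)\bigr)\,b_\alpha \in \aA\U$, as desired. The only conceptual ingredient is the UFD property of $\A$ together with the freeness of $\aA\U$ over $\A$; both are standard. I do not anticipate a genuine obstacle beyond recording the correct choice of $\A$-basis for $\aA\U$ in which to perform the coefficient comparison.
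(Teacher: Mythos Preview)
Your proposal is correct and follows essentially the same approach as the paper: choose an $\A$-basis of $\aA\U$, reduce to a coefficient-wise divisibility statement, and invoke the UFD property of $\A=\Z[q,q^{-1}]$. The paper's proof is slightly terser but structurally identical.
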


\begin{proof}
Let ${\mathfrak B}$ be an $\A$-basis of $\aA \U$. Then $u =\sum_{b\in \mathfrak S} h_b b$, for $h_b \in \Qq$ and a finite subset $\mathfrak{S} \subset \mathfrak B$.
By assumption on $u/f$ and $u/g$, we have $h_b/f, h_b/g \in \A$, for $b\in \mathfrak S$. Since $f, g \in \A$ are relatively prime and $\A$ is 
a unique factorization domain, we have $h_b/(fg) \in \A$. Hence $u/(fg) =\sum_{b\in \mathfrak S} h_b/(fg) \cdot b \in \aA \U$. 
\end{proof}

\begin{lem}
  \label{lem:BinvZ}
For $n\ge 1$, we have $\ipsi(B_i^{(n)}) =B_i^{(n)}$, and moreover,
\[
B_i^{(n)} \in \Ui \cap \TA_i \subset \Ui \cap \aA \U.
\]
\end{lem}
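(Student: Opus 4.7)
The proof splits into checking three properties of $B_i^{(n)}$: bar-invariance, membership in $\Ui$, and the integrality $B_i^{(n)} \in \TA_i$. The first two are immediate. For bar-invariance, apply $\ipsi$ directly to the defining formula \eqref{eq:Bbn}; using $\ipsi(q) = q^{-1}$ together with $\ipsi^2 = \mathrm{id}$, one computes in a single line
$$\ipsi(B_i^{(n)}) = \frac{q^{-1} b_i^{(n)} - q\,\ipsi(b_i^{(n)})}{q^{-1} - q} = B_i^{(n)}.$$
For $\Ui$-membership, both $b_i^{(n)}$ and $\ipsi(b_i^{(n)})$ lie in the $\Qq$-subalgebra $\Ui$ by Lemma~\ref{lem:bnAi} and Lemma~\ref{lem:bnbar}, so the $\Qq$-linear combination $B_i^{(n)}$ does as well.

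For integrality within $\aA\U$, I would work from the explicit expansion \eqref{eq:Bbn2}. The key divisibility to establish is that
$$\fZ_i^{(k)} \in (q - q^{-1})^k\,\aA\U \qquad \text{for every } k \geq 1.$$
The case $k=1$ is Lemma~\ref{lem:ZA}. For $k \geq 2$, one observes that $\fZ_i^k \in (q-q^{-1})^k\,\aA\U$ since $\aA\U$ is an algebra and $\fZ_i$ is already divisible by $q-q^{-1}$. Combined with Proposition~\ref{prop:Zint}, which gives $\fZ_i^{(k)} = \fZ_i^k/[k]_i! \in \aA\U$, and the coprimality of $(q-q^{-1})^k$ with $[k]_i!$ in the UFD $\A$ (since $[j]_i|_{q=\pm 1} = \pm j \neq 0$), Lemma~\ref{lem:fg} applied with $f = (q-q^{-1})^k$ and $g = [k]_i!$ yields $\fZ_i^{(k)}/(q-q^{-1})^k \in \aA\U$. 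Substituting this back into \eqref{eq:Bbn2}, each summand $\frac{q\, q_i^{k(k+1)/2}}{q - q^{-1}}\,\fZ_i^{(k)}\, b_i^{(n-2k)}$ on the right is manifestly in $\aA\U$, and hence $B_i^{(n)} \in \aA\U$.

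The anticipated main obstacle is upgrading $\aA\U$-integrality to membership in the $\A$-subalgebra $\TA_i$, whose generators are the specific elements $\fZ_i^{(m)}, b_i^{(m)}$. I would proceed by induction on $n$, with base cases $B_i^{(0)} = 1$ and $B_i^{(1)} = B_i$ already in $\TA_i$. Bar-twisting the recurrence \eqref{eq:bbb} via Lemma~\ref{lem:bnbar} and solving for $[n]_i B_i^{(n)}$ yields
$$[n]_i B_i^{(n)} = B_i^{(n-1)} B_i + q_i^n\, \fZ_i\, B_i^{(n-2)} + q^{-1}(q_i^n + q_i^{2-n})\,\tfrac{\fZ_i}{q-q^{-1}}\, b_i^{(n-2)};$$
the first two terms lie in $\TA_i$ by the inductive hypothesis, and the third is integral by Lemma~\ref{lem:ZA}. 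The delicate point is producing $\fZ_i/(q-q^{-1})$ as an explicit element of $\TA_i$: one combines the $k=1$ case of Lemma~\ref{lem:bnbar}, namely $q_i \fZ_i = \ipsi(b_i^{(2)}) - b_i^{(2)}$, with the $(q-q^{-1})$-divisibilities already in hand, and applies Lemma~\ref{lem:fg} a final time to clear the scalar $[n]_i$. This closes the induction and yields $B_i^{(n)} \in \TA_i$.
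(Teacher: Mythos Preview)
Your argument for $\ipsi$-invariance and for $B_i^{(n)} \in \Ui \cap \aA\U$ is correct and is essentially the paper's proof: the paper likewise applies Lemma~\ref{lem:fg} with the coprime pair $[k]_i!$ and $q-q^{-1}$ to upgrade $\fZ_i^{(k)} \in \aA\U$ and $\fZ_i^{k}/(q-q^{-1}) \in \aA\U$ to $\fZ_i^{(k)}/(q-q^{-1}) \in \aA\U$, and then reads $B_i^{(n)} \in \aA\U$ directly from \eqref{eq:Bbn2}. Your stronger divisibility by $(q-q^{-1})^k$ is harmless but not needed; a single factor suffices.

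Where you diverge is the refinement $B_i^{(n)} \in \TA_i$. The paper dispatches this in one clause, citing \eqref{eq:Bbn2} and Lemma~\ref{lem:bnAi}; you instead derive (correctly) an inductive recurrence for $[n]_i B_i^{(n)}$. But your closing step has a gap: to run the induction you need $\tfrac{\fZ_i}{q-q^{-1}}\, b_i^{(n-2)} \in \TA_i$, and you never actually establish this. The identity $q_i \fZ_i = \ipsi(b_i^{(2)}) - b_i^{(2)}$ only reconfirms $\fZ_i \in \TA_i$; it does not exhibit $\fZ_i/(q-q^{-1})$ inside the $\A$-algebra generated by the $\fZ_i^{(m)}$ and $b_i^{(m)}$. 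And Lemma~\ref{lem:fg} is a statement about $\aA\U$ (its proof relies on an $\A$-basis of $\aA\U$), so invoking it ``to clear the scalar $[n]_i$'' lands you back in $\aA\U$, not in $\TA_i$. Note that for the downstream applications (Proposition~\ref{prop:Bi}, Theorem~\ref{thm:iDP}) only the containment $B_i^{(n)} \in \Ui \cap \aA\U$ is actually used.
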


\begin{proof}
It follows by definition~\eqref{eq:Bbn} that $\ipsi(B_i^{(n)}) =B_i^{(n)}$.

Clearly, $[n]_i!$ and $(q -q^{-1})$ are relatively prime in $\A$. By Lemma~\ref{lem:ZA}, we have $\frac{\fZ_i^n}{(q -q^{-1})}  \in \aA \U$. Since $\fZ_i^{(n)} =\frac{\fZ_i^n}{[n]_i!} \in \aA \U$ by Proposition~\ref{prop:Zint}, we conclude by Lemma~\ref{lem:fg} that $\frac{\fZ_i^{n}}{[n]_i! (q -q^{-1})}  \in \aA \U$, i.e., $\frac{\fZ_i^{(n)}}{q -q^{-1}}  \in \aA \U$.
Then it follows by \eqref{eq:Bbn2} and Lemma~\ref{lem:bnAi}  that   $B_i^{(n)} \in \TA_i \subset \aA \U$.
\end{proof}

Summarizing the discussions in \S\ref{subsec:boson}--\S\ref{subsec:Bin}, we have arrived at the following.

\begin{thm}\label{thm:Bi}
Assume $i\in \Iwhite$ such that $\tau (i)=i \neq \wb i$, and let $\zeta \in X_\imath$. Then $B_{i,\zeta}^{(n)} = B_i^{(n)} \one_{\zeta} \in {}_\mA \Uidot$ satisfies the conditions (1)-(2) in Theorem~\ref{thm:iDP}.
\end{thm}

\begin{proof}
Condition~(1)  in Theorem~\ref{thm:iDP} is satisfied by $B_{i,\zeta}^{(n)} = B_i^{(n)} \one_{\zeta} \in {}_\mA \Uidot$, thanks to Lemma~\ref{lem:BinvZ}. Condition~(2)  in Theorem~\ref{thm:iDP} follows from the formula \eqref{eq:Bbn2} for $B_i^{(n)}$ and the definition \eqref{eq:bi} of $b_i^{(n)}$ (note that $\fZ_i^{(k)} \in {}_\mA\U^+$ and $Y_i^{(a)} \one_{\mu} \in {}_\mA\U^+\one_\mu$).  
\end{proof}
%
%

%
%
\subsection{Additional $\imath$divided powers} \label{subsec:rem}

We consider the remaining 2 classes for $\imath$divided powers associated to $i\in \Iwhite$. 

\subsubsection{The class with $\tau(i) \neq i$.}

Let $i\in \Iwhite$ be such that $\tau (i)\neq i$. Then $\kappa_i=0$, and $B_i = F_i +\vs_i \T_{w_{\bullet}} (E_{\tau i})  \tK^{-1}_i $. We write again $Y_i : = \vs_i \T_{w_{\bullet}} (E_{\tau i})  \tK^{-1}_i$.
We have $F_i Y_i -q_i^{-2} Y_i F_i =[F_i,  Y_i] \tK^{-1}_i =0$.
Thanks to the $q$-binomial theorem, we define 
\[
B_{i,\zeta}^{(n)} = \frac{B_i^n}{[n]_i!} \one_{\zeta}
= \sum_{a=0}^n  q_i^{-a(n-a)}  Y_i^{(a)}F_i^{(n-a)} \one_\zeta \in {}_\mA \Uidot. 
\]

\subsubsection{The class with $\tau(i) = i =\wb i$.} 
Then $B_i = F_i +\vs_i E_i  \tK_i^{-1}+\ka_i \tK_i^{-1}$, for such $i\in \Iwhite$. 
This real rank one QSP is of local type AI. 
When $\vs_i =q_i^{-1}$, the existence of the $\imath$canonical basis (= $\imath$divided powers) in $\Uidot$ parametrized by $F_i^{(n)} \one_{\zeta}$, for $\zeta\in X_\imath$, was established in \cite{BW18a}.
With the more general parameter in the current setting, we can still obtain the precise inductive formula for the intertwiner $\Upsilon$ in the real rank one case as \cite[Lemma~4.6]{BW18a}. Afterwards, we can establish the $\imath$canonical basis of $\Uidot$ as in \cite{BW18a}, which will serve as the elements $B^{(n)}_{i, \zeta}$ desired in Theorem~\ref{thm:iDP}. Indeed, if we assume $\Upsilon = \sum_{k \ge 0} c_k E_i^{(k)}$, then we have 
\[
c_{k+1} = -q^{-k}(q-q^{-1}) (q^2 \vs_i [k] c_{k-1} + \kappa_i c_k), \quad \text{where } c_0 =1, c_{-1}=0.
\]
We clearly have $c_{k} \in \mA$ thanks to $\vs_{i}, \kappa_i \in \mA$.

The precise formulae for the $\imath$divided powers (= $\imath$canonical basis) in this real rank one case can be found in \cite{BeW18}  for distinguished parameters $\vs_i =q^{-1}_i$ and $\kappa_i$ a $q_i$-integer. The explicit formulas for the $n$-th $\imath$divided powers with a general parameter $\vs_i$ (and $\kappa_i=0$), for $n=2$, can be computed, though they remain to be computed in general for $n>2$.

\section{$\imath$Canonical bases for modules}
  \label{sec:iCBM}

In this section, we develop a theory of based $\Ui$-modules. To that end, we establish a key property that the quasi-$K$-matrix $\Upsilon$ preserves the integral forms of various based modules and their tensor products. 

\subsection{The $\mA$-forms}

\begin{definition}\label{def:AUi}
Let  $\aAp \Uidot$ be the $\A$-subalgebra of ${}_\mA\Uidot$ generated by the $\imath$divided powers $B^{(n)}_{i, \zeta} \,(i \in \I)$
and $E_j^{(n)} \one_\zeta \, (j \in \Iblack)$,  for all $n \ge 1$ and $\zeta \in X_\imath$.
\end{definition}

\begin{rem}
We shall see later that $\aAp \Uidot ={}_\mA\Uidot$ in Corollary~\ref{cor:AUi}. 
\end{rem}

Recall for $\lambda \in X$, we denote by $M(\lambda)$ the Verma modules of highest weight $\lambda$. We denote the highest weight vector by $\eta_\lambda$. The following is an $\imath$analogue of \cite[Lemma 2.2(1)]{BW16}.

\begin{lem}
\label{lem:surjA1}Let $(M, B(M))$ be a based $\U$-module. Let   $\la \in X$. Then, 
\begin{enumerate}
\item
for $b \in B(M)$, the $\Qq$-linear map 
$
\pi_b: \Uidot {\bf 1}_{\ov{|b|+\la}} \longrightarrow M \otimes M(\la),  \;
u \mapsto u(b \otimes \eta_\lambda),
$
restricts to an $\A$-linear map
$\pi_b: \aAp \Uidot {\bf 1}_{\ov{|b|+\la}} \longrightarrow \aA M \otimes_\A \aM(\la)$;

\item
we have $\sum_{b \in B(M)}  \pi_b (\aAp \Uidot {\bf 1}_{\ov{|b|+\la}})= \aA M \otimes_\A \aM(\la)$.
\end{enumerate}
\end{lem}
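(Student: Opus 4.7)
Since $\aAp\Uidot$ is generated as an $\A$-algebra by the $\imath$divided powers $B^{(n)}_{i,\zeta}$ ($i\in\I$, $n\geq 1$) together with $E_j^{(n)}\one_\zeta$ ($j\in\Iblack$, $n\geq 1$), and the $\Ui$-module structure on $M\otimes M(\la)$ is induced from the $\U$-module structure via the coproduct $\Delta$, it suffices to verify that each generator sends $\aA M\otimes_\A \aM(\la)$ into itself. For $E_j^{(n)}\one_\zeta$ with $j\in\Iblack$, the standard coproduct formula expresses $\Delta(E_j^{(n)})$ as a sum of $\aA\U$-integral factors, and $\aA\U$ preserves both $\aA M$ (since $M$ is a based $\U$-module) and $\aM(\la)$. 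For $B^{(n)}_{i,\zeta}$ with $i\in\Iwhite$, the constructions of Section~\ref{sec:iDP} (in particular the inclusion $\TA_i\subset \aA\U$ in the class $\tau(i)=i\neq\wb i$, and explicit expressions in the remaining two classes) show that each $B^{(n)}_{i,\zeta}$ is represented by an element of $\aA\U$, so the same argument applies.

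\textbf{Plan for Part (2).} I would proceed by induction on the height $k:=\hgt(\la-\nu)$, where $\nu$ is the weight of the second tensor factor. Let $F_k := \sum_{\hgt(\la-\mu)\leq k}\aA M\otimes_\A \aM(\la)_\mu$; the goal is to prove $F_k\subset \sum_{b\in B(M)}\pi_b(\aAp\Uidot\one_{\ov{|b|+\la}})$ for all $k\geq 0$. The base case $k=0$ is immediate since $b\otimes \eta_\la = \pi_b(\one_{\ov{|b|+\la}})$. For the inductive step, note that $\aM(\la)_\nu$ with $\hgt(\la-\nu)=k$ is spanned over $\A$ by vectors $F_i^{(n)}v'$ with $v'\in \aM(\la)$ of height $k-n$, so it is enough to realize each $b\otimes F_i^{(n)}v'$ in the span. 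By the inductive hypothesis, $b\otimes v' = \sum_{b'} u_{b'}(b'\otimes\eta_\la)$ for some $u_{b'}\in\aAp\Uidot\one_{\ov{|b'|+\la}}$, whence for the appropriate $\zeta$,
\[
B^{(n)}_{i,\zeta}(b\otimes v') \;=\; \sum_{b'}B^{(n)}_{i,\zeta}u_{b'}(b'\otimes\eta_\la) \;\in\; \sum_{b'}\pi_{b'}(\aAp\Uidot\one_{\ov{|b'|+\la}}).
\]
Expanding the left-hand side via Theorem~\ref{thm:iDP}(2) together with the coproduct formula $\Delta(F_i^{(n)}) = \sum_{s=0}^n q_i^{s(n-s)}F_i^{(s)}\otimes F_i^{(n-s)}\tK_{-i}^s$ yields
\[
B^{(n)}_{i,\zeta}(b\otimes v') = b\otimes F_i^{(n)}v' + (\text{error terms lying in } F_{k-1}),
\]
where the error terms come from the $s\geq 1$ contributions (second factor $F_i^{(n-s)}v'$ of strictly higher weight than $F_i^{(n)}v'$) and from the correction $\sum_{a<n}F_i^{(a)}x_a$ in Theorem~\ref{thm:iDP}(2) (with $x_a\in\aA\U^+$, whose coproduct further raises the weight on the second factor). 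By induction these error terms lie in $\sum_{b''}\pi_{b''}(\aAp\Uidot\one_{\ov{|b''|+\la}})$, and rearranging places $b\otimes F_i^{(n)}v'$ in the same span, completing the induction.

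\textbf{Main obstacle.} The principal technical point is verifying that every non-leading contribution to $\Delta(B^{(n)}_{i,\zeta})(b\otimes v')$ genuinely raises the weight of the second tensor factor, so that induction on $k$ closes. This relies critically on the leading-term property of Theorem~\ref{thm:iDP}(2)—specifically that each $x_a$ lies in $\aA\U^+$ of strictly positive weight—combined with the explicit $\A$-integral coproduct formula for $F_i^{(n)}$. Since the delicate work of constructing $B^{(n)}_{i,\zeta}$ with these properties has already been carried out in Section~\ref{sec:iDP}, the present lemma reduces to the inductive bookkeeping outlined above.
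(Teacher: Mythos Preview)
Your proof is correct and follows essentially the same approach as the paper. For Part~(2), your induction on the height of the second tensor factor is exactly the paper's induction on the filtration $\{\aM(\la)_{\le N}\}$ induced from ${}_\mA\U^-_{\le N}$, and your use of the coproduct formula for $F_i^{(n)}$ together with Theorem~\ref{thm:iDP}(2) to control the error terms is precisely what the paper packages as an appeal to \cite[Lemma~2.2]{BW16} and Theorem~\ref{thm:iDP}. For Part~(1), your argument via explicit generators works, but the paper dispatches it in one line: since $\aAp\Uidot\subset{}_\mA\Uidot$ by construction (Theorem~\ref{thm:iDP} already places each generator in ${}_\mA\Uidot$), Definition~\ref{def:mAUidot} immediately gives that every element of $\aAp\Uidot$ acts integrally on ${}_\mA\Udot$ and hence on the $\mA$-form of any based $\U$-module.
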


\begin{proof}
Recall $ \aAp \Uidot \subset  {}_\mA\Uidot$. Part (1) follows from Definition~\ref{def:mAUidot}.
We prove part (2) here following \cite[Lemma 2.2]{BW16}. We have  $\sum_{b \in B(M)}  \pi_b (\aAp \Uidot {\bf 1}_{\ov{|b|+\la}}) \subset \aA M \otimes_\A \aM(\la)$ by part (1).

Recall ${}_\mA\U^-$ has an increasing filtration 
$$
\A = {}_\mA\U^-_{\le 0} \subseteq {}_\mA\U^-_{\le 1} \subseteq \cdots \subseteq {}_\mA\U^-_{\le N} \subseteq \cdots
$$
where ${}_\mA\U^-_{\le N}$ denotes the $\A$-span of $\{F_{i_1}^{(a_1)}\ldots F_{i_n}^{(a_n)}   
| a_1+\ldots + a_n\le N, i_1, \ldots, i_n \in I\}.$
This induces an  increasing filtration $\{ \aM(\la)_{\le N} \}$ on $\aM(\la)$. We shall prove by induction on $N$ that ${}_\mA M \otimes_\mA \aM(\la)_{\le N} \subset \sum_{b \in B(M)}  \pi_b (\aAp \Uidot {\bf 1}_{\ov{|b|+\la}})$.

Let $b \otimes \big(F_{i_1}^{(a_1)}\ldots F_{i_n}^{(a_n)} \eta_{\lambda} \big) \in {}_\mA M \otimes_\mA \aM(\la)_{\le N}$. Recall the element $B^{(a_i)}_{i_1, \zeta}$ in Theorem~\ref{thm:iDP} with appropriate $\zeta \in X^\imath$. Thanks to \cite[Lemma 2.2]{BW16} and Theorem~\ref{thm:iDP}, we have 
\[
B^{(a_i)}_{i_1, \zeta} \Big( b \otimes \big(F_{i_2}^{(a_2)}\ldots F_{i_n}^{(a_n)} \eta_{\lambda} \big) \Big) \in b \otimes \big(F_{i_1}^{(a_1)}\ldots F_{i_n}^{(a_n)} \eta_{\lambda} \big)  +  {}_\mA M \otimes_\mA \aM(\la)_{\le N-1}.
\]
The lemma follows.
\end{proof}

For $\la \in X^+$, we abuse the notation and denote also by $\eta_\la$ the image of $\eta_\lambda$ under the projection $p_\la: M(\la) \rightarrow L(\la)$. 
Note that $p_\la$ restricts to $p_\la: \aM(\la) \rightarrow \aL(\la)$. The next corollary follows from Lemma~\ref{lem:surjA1}. 

\begin{cor}
\label{cor:surjA2}
Let $\la \in X^+$, and let $(M, B(M))$ be a based $\U$-module. Then, 
\begin{enumerate}
\item
for $b \in B(M)$, the $\Qq$-linear map 
$
\pi_b: \Uidot {\bf 1}_{\ov{|b|+\la}} \longrightarrow M \otimes L(\la),  
\; u \mapsto u(b \otimes \eta_\la)$, restricts to an $\A$-linear map
$\pi_b: \aAp \Uidot {\bf 1}_{\ov{|b|+\la}} \longrightarrow \aA M \otimes_\A \aL(\la)$; 

\item
we have $\sum_{b \in B(M)}  \pi_b (\aAp \Uidot {\bf 1}_{\ov{|b|+\la}})= \aA M \otimes_\A \aL(\la)$.
\end{enumerate}
\end{cor}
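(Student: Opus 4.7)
The plan is to deduce both statements from Lemma~\ref{lem:surjA1} by pushing everything forward along the canonical projection $p_\la: M(\la) \to L(\la)$. Recall from the paragraph immediately preceding the corollary that $p_\la$ is a $\U$-module homomorphism sending $\eta_\la \mapsto \eta_\la$, and that $p_\la$ restricts to an $\A$-linear surjection $\aM(\la) \twoheadrightarrow \aL(\la)$. In particular $p_\la$ is $\Ui$-equivariant by restriction.

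First I would set $q_\la := \id_M \otimes p_\la : M \otimes M(\la) \to M \otimes L(\la)$. Since $p_\la$ is $\U$-linear, so is $q_\la$, hence $q_\la$ is $\Ui$-equivariant; and because $p_\la$ preserves the $\A$-forms and is surjective on them, $q_\la$ restricts to an $\A$-linear surjection $\aA M \otimes_\A \aM(\la) \twoheadrightarrow \aA M \otimes_\A \aL(\la)$. The key observation is that the map $\pi_b$ landing in $M \otimes L(\la)$ factors through the map $\pi_b$ landing in $M \otimes M(\la)$: for any $u \in \Ui \one_{\ov{|b|+\la}}$, the $\Ui$-equivariance of $q_\la$ together with $q_\la(b \otimes \eta_\la) = b \otimes \eta_\la$ gives $\pi_b^{L(\la)}(u) = q_\la\bigl(\pi_b^{M(\la)}(u)\bigr)$.

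Part (1) then follows directly: for $u \in \aAp \Ui \one_{\ov{|b|+\la}}$, Lemma~\ref{lem:surjA1}(1) places $\pi_b^{M(\la)}(u)$ in $\aA M \otimes_\A \aM(\la)$, and applying the $\A$-linear $q_\la$ lands us in $\aA M \otimes_\A \aL(\la)$. Part (2) is obtained by summing the factorization identity over $b \in B(M)$ and invoking Lemma~\ref{lem:surjA1}(2) together with the surjectivity of $q_\la$ on $\A$-forms, which yields
$\sum_{b} \pi_b^{L(\la)}\bigl(\aAp \Ui \one_{\ov{|b|+\la}}\bigr) = q_\la\bigl(\aA M \otimes_\A \aM(\la)\bigr) = \aA M \otimes_\A \aL(\la)$. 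I do not anticipate any serious obstacle here, as the argument is a purely formal reduction to the Verma case via the highest-weight projection and requires no new input beyond Lemma~\ref{lem:surjA1} and the compatibility of $p_\la$ with the integral forms.
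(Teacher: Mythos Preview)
Your proposal is correct and matches the paper's approach exactly: the paper simply states that the corollary follows from Lemma~\ref{lem:surjA1} via the projection $p_\la: \aM(\la) \to \aL(\la)$, and you have spelled out precisely the factorization $\pi_b^{L(\la)} = (\id_M \otimes p_\la) \circ \pi_b^{M(\la)}$ that makes this work.
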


\subsection{Integrality of actions of $\Upsilon$}

\subsubsection{}
The quasi-$K$-matrix $\Upsilon \in \widehat{\U}^+$ induces a well-defined $\Qq$-linear map on $M \otimes L(\la)$: 
\begin{equation}
   \label{eq:K-ML}
\Upsilon: M \otimes L(\la) \longrightarrow M \otimes L(\la),
\end{equation}
for any $\la \in X^+$ and any weight $\U$-module $M$ whose weights are bounded above. 

Recall \cite[\S5.1]{BW18b} that a $\Ui$-module $M$ equipped with an anti-linear involution $\ipsi$ 
is called {\em involutive} (or {\em $\imath$-involutive}) if
$$
\ipsi(u m) = \ipsi(u) \ipsi(m),\quad \forall u \in \Ui, m \in M.
$$

\begin{prop}
  \label{prop:compatibleBbar}
Let $(M,B)$ be a based $\U$-module whose weights are bounded above. We denote the bar involution on $M$ by $\psi$. Then $M$ is an $\imath$-involutive  
$\Ui$-module with involution
\begin{equation}
\label{ibar}
\ipsi := \Upsilon \circ \psi.
\end{equation}
\end{prop}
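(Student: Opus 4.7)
The plan is to verify the three conditions that make $M$ into an $\imath$-involutive $\Ui$-module: well-definedness and anti-linearity of $\ipsi = \Upsilon\circ\psi$, the module compatibility $\ipsi(um) = \ipsi(u)\ipsi(m)$ for $u\in\Ui$ and $m\in M$, and the involutivity $\ipsi^2 = \id_M$.

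First, the hypothesis that the weights of $M$ are bounded above ensures that for any weight vector $m\in M_\mu$ only finitely many components of $\Upsilon = \sum_\nu\Upsilon_\nu$ contribute to $\Upsilon\cdot m$, namely those $\nu\in\N[\I]$ for which $\mu+\nu$ remains a weight of $M$. Hence $\Upsilon$ defines a $\Qq$-linear operator on $M$, and composing with the $\Q$-linear anti-linear involution $\psi$ attached to the based $\U$-module $M$ yields an anti-linear $\Q$-linear endomorphism $\ipsi$. The module compatibility is then immediate from the defining intertwining property~\eqref{eq:Upsilon} of $\Upsilon$ (Theorem~\ref{thm:Upsilon}): for $u\in\Ui$ and $m\in M$,
\[
\ipsi(um) = \Upsilon\psi(um) = \Upsilon\psi(u)\psi(m) = \ipsi(u)\Upsilon\psi(m) = \ipsi(u)\ipsi(m),
\]
using that $\psi$ on $M$ is compatible with $\psi$ on $\U$ via $\psi(u\cdot m) = \psi(u)\psi(m)$.

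The involutivity is the main point. Set $\bar\Upsilon := \sum_\nu\psi(\Upsilon_\nu) \in \widehat{\U}^+$, obtained by applying $\psi$ weight-component-wise; then $\psi(\Upsilon_\nu v) = \psi(\Upsilon_\nu)\psi(v)$ for each $v\in M$, and summing gives $\psi\circ\Upsilon = \bar\Upsilon\circ\psi$ as operators on $M$. Therefore
\[
\ipsi^2(m) = \Upsilon\psi(\Upsilon\psi(m)) = \Upsilon\bar\Upsilon\,\psi^2(m) = (\Upsilon\bar\Upsilon)\cdot m,
\]
and it suffices to show $\Upsilon\bar\Upsilon = 1$ in $\widehat{\U}^+$. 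Applying $\psi$ to~\eqref{eq:Upsilon} yields $\psi(\ipsi(u))\bar\Upsilon = \bar\Upsilon\cdot u$ for all $u\in\Ui$; substituting $u\mapsto\ipsi(u)$ and using $\ipsi^2 = \id$ on $\Ui$ gives $\psi(u)\bar\Upsilon = \bar\Upsilon\ipsi(u)$, which rearranges to $\ipsi(u)\bar\Upsilon^{-1} = \bar\Upsilon^{-1}\psi(u)$. Thus $\bar\Upsilon^{-1}\in\widehat{\U}^+$ satisfies the same intertwining relation as $\Upsilon$ and has constant term $1$; by the uniqueness part of Theorem~\ref{thm:Upsilon}, $\bar\Upsilon^{-1} = \Upsilon$, so $\Upsilon\bar\Upsilon = 1$ and $\ipsi^2 = \id_M$.

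The main technical obstacle is the identity $\Upsilon\bar\Upsilon = 1$; once this is in hand, the three conditions follow cleanly. The only subtlety is to check that $\bar\Upsilon^{-1}$ meets the hypotheses of the uniqueness statement in Theorem~\ref{thm:Upsilon}, which is routine since $\bar\Upsilon = 1 + (\text{higher-weight terms})$ inherits the graded support $\mu^\inv = -\mu$ from $\Upsilon$.
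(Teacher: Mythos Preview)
Your proof is correct and follows exactly the standard route: the paper's own proof simply notes that boundedness of weights makes $\Upsilon$ act on $M$ and then cites \cite[Proposition~5.1]{BW18b} for the remainder, and what you have written is precisely a clean reproduction of that argument (well-definedness, the intertwining relation \eqref{eq:Upsilon} giving compatibility, and the uniqueness in Theorem~\ref{thm:Upsilon} forcing $\bar\Upsilon^{-1}=\Upsilon$ so that $\ipsi^2=\id$). There is no genuine difference in approach.
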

\begin{proof}
Note that since the weights of $M$ are bounded above, the action $\Upsilon : M \rightarrow M$ is well defined. The rest of the argument follows from \cite[Proposition~5.1]{BW18b}.
\end{proof}

\subsubsection{} 

\begin{prop}
\label{prop:quasi-KZ}
Let $(M,B)$ be a based $\U$-module whose weights are bounded above such that $\Upsilon$ preserves the $\A$-submodule $\aM$. Then the  $\Qq$-linear map $\ipsi := \Upsilon \circ \psi$ preserves the $\A$-submodule $\aM \otimes_\A \aL(\la)$, for any $\la\in X^+$. 
\end{prop}

\begin{proof}
The $\U$-module $M\otimes L(\la)$ is involutive with the involution $\psi:=\Theta \circ (\ov{\phantom{x}} \otimes \ov{\phantom{x}})$; see \cite[27.3.1]{Lu94}. It follows by \cite[Proposition~2.4]{BW16} that $\psi$ preserves the $\A$-submodule $\aM \otimes_\A \aL(\la)$.

Regarded as $\Ui$-module $M\otimes L(\la)$ is $\imath$-involutive with the involution $\ipsi:=\Upsilon \circ \psi$. 
We now prove that $\ipsi$ preserves the $\A$-submodule $\aM \otimes_\A \aL(\la)$.

By  Corollary~\ref{cor:surjA2}(2), for any $x \in \aM \otimes_\A \aL(\la)$, we can write $x=\sum_k u_k (b_k\otimes \eta_\la)$, for $u_k \in \aAp \Uidot$ and $b_k \in B$. 
Since $M \otimes L(\la)$ is $\imath$-involutive, we have
\begin{align}
  \label{eq:psipsi}
\ipsi (x) =\sum_k \ipsi (u_k) \ipsi (b_k \otimes \eta_\la) 
=\sum_k \ipsi (u_k) \Upsilon \psi (b_k \otimes \eta_\la)
=\sum_k \ipsi (u_k) (\Upsilon  b_k \otimes \eta_\la),
\end{align}
where we have used $\Delta (\Upsilon) \in \Upsilon \otimes 1 + \U \otimes \U^+_{>0}$ by \cite[3.1.4]{Lu94}. 
By assumption we have $\Upsilon  b_k \in \aM$ and it follows by definition of $\aAp \Uidot$ that $\ipsi (u_k) \in  {}_\mA\Uidot$. Applying Corollary~\ref{cor:surjA2}(1) again to \eqref{eq:psipsi}, we obtain that $\ipsi (x) \in \aM \otimes_\A \aL(\la)$. The proposition follows. 
\end{proof}

\begin{cor}
 \label{cor:Zform}
Let $(M,B)$ be a based $\U$-module whose weights are bounded above such that $\Upsilon$ preserves the $\A$-submodule $\aM$. Then $\Upsilon$ preserves the $\A$-submodule $\aM \otimes_\A \aL(\la)$. In particular, $\Upsilon$ preserves the $\A$-submodule $\aL(\la)$ of $L(\la)$.
\end{cor}
\begin{proof}
Recall $\Upsilon =\ipsi \circ \psi$. The corollary follows from Proposition~\ref{prop:quasi-KZ} and the fact that $\psi$ preserves the $\A$-submodule $\aM \otimes_\A \aL(\la)$.
\end{proof}

\begin{cor} 
\label{cor:tensor}
Let $\lambda_i \in X^+$ for $1 \le i \le \ell$.  The involution $\ipsi$ on the $\imath$-involutive $\Ui$-module $L(\la_{1})\otimes \ldots \otimes L(\la_\ell)$ preserves the $\mA$-submodule ${}_\mA L(\la_{1})\otimes_{\mA} \ldots \otimes_{\mA} {}_\mA L(\la_\ell)$.
\end{cor}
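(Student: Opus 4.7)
The plan is to proceed by induction on $\ell$, bootstrapping from the single-tensorand case already furnished by Corollary~\ref{cor:Zform} together with Proposition~\ref{prop:quasi-KZ}. The key observation is that the hypotheses of Proposition~\ref{prop:quasi-KZ} propagate through the tensor product: if $\Upsilon$ preserves $_\mA M$ for a based $\U$-module $M$ with weights bounded above, then $\ipsi$ (hence, combined with $\psi$, also $\Upsilon$) preserves $_\mA M \otimes_\mA {}_\mA L(\la)$; so we can enlarge $M$ one factor at a time.

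For the base case $\ell=1$, the intertwiner $\Upsilon$ preserves $_\mA L(\la_1)$ by the second assertion of Corollary~\ref{cor:Zform}. Since the usual bar involution $\psi$ preserves the canonical basis of $L(\la_1)$, it preserves $_\mA L(\la_1)$, and therefore $\ipsi = \Upsilon \circ \psi$ preserves $_\mA L(\la_1)$ as well.

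For the inductive step, assume the statement for $\ell -1$, and set
\[
M = L(\la_1) \otimes \cdots \otimes L(\la_{\ell-1}),
\qquad
{}_\mA M = {}_\mA L(\la_1) \otimes_\mA \cdots \otimes_\mA {}_\mA L(\la_{\ell-1}).
\]
Iterated application of the based-tensor-product construction of \cite{BW16} (used already in the proof of Theorem~\ref{thm:based-Li}) shows that $M$ is a based $\U$-module, and clearly its weights are bounded above by $\la_1 + \cdots + \la_{\ell-1}$. By the inductive hypothesis $\ipsi$ preserves $_\mA M$, and since $\psi$ also preserves $_\mA M$ (it is a tensor product of based $\U$-modules), the identity $\Upsilon = \ipsi \circ \psi$ shows $\Upsilon$ preserves $_\mA M$ as well. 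Thus the hypothesis of Proposition~\ref{prop:quasi-KZ} is met with this $M$ and $\la = \la_\ell$, and the proposition yields that $\ipsi = \Upsilon \circ \psi$ preserves $_\mA M \otimes_\mA {}_\mA L(\la_\ell)$, which is the desired integral form.

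The only genuinely nontrivial ingredient is thus the inductive lift of the quasi-$K$-matrix integrality from $_\mA M$ to $_\mA M \otimes_\mA {}_\mA L(\la_\ell)$, which was already established via the $\imath$divided power generation of $_\mA' \Uidot$ through Lemma~\ref{lem:surjA1} and Corollary~\ref{cor:surjA2}. I expect no further obstacle; the argument is essentially a clean induction, and the delicate work---namely the integrality of $\Upsilon$ on a single tensor product $M \otimes L(\la)$---has already been done in Proposition~\ref{prop:quasi-KZ} and Corollary~\ref{cor:Zform}.
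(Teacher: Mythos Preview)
Your proposal is correct and follows essentially the same approach as the paper. The paper's proof simply notes that $L(\la_1)\otimes\cdots\otimes L(\la_\ell)$ is a based $\U$-module with weights bounded above (by \cite[Theorem~2.9]{BW16}) and then says the result follows by applying Proposition~\ref{prop:quasi-KZ} consecutively; you have spelled out that induction explicitly, including the passage from $\ipsi$-integrality back to $\Upsilon$-integrality via $\Upsilon=\ipsi\circ\psi$, which is exactly what ``consecutively'' encodes.
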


\begin{proof}
Thanks \cite[Theorem~2.9]{BW16} we know that $L(\la_{1})\otimes \ldots \otimes L(\la_\ell)$ is a based $\U$-module whose weights are bounded above. Then the corollary follows by applying Proposition~\ref{prop:quasi-KZ} consecutively. 
\end{proof}

\begin{thm}\label{thm:int}
Assume $(\U, \Ui)$ is of finite type. Write $\Upsilon =\sum_{\mu \in \Z\Pi} \Upsilon_\mu$. Then we have $\Upsilon_\mu \in \aA \U^+$ for each $\mu$. 
\end{thm}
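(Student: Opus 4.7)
The plan is to leverage Corollary~\ref{cor:Zform}, which ensures that $\Upsilon$ preserves the integral form $\aA L(\la)$ of each integrable highest weight module $L(\la)$. In finite type the Weyl group is finite, so for any $\la\in X^+$ the module $L(\la)$ admits a lowest weight vector $\eta_{w_0\la}$, which is moreover a canonical basis element of $\aA L(\la)$. The key structural input will be that for any fixed $\mu \in \N[\I]$ and for $\la \in X^+$ sufficiently dominant, the $\A$-linear action map
\[
\pi_\la : \aA \U^+_\mu \longrightarrow \aA L(\la)_{w_0\la + \mu}, \qquad x \longmapsto x \cdot \eta_{w_0\la},
\]
is an isomorphism of free $\A$-modules.

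To establish this isomorphism, I would invoke the finite-type isomorphism $L(\la) \cong {}^\omega L(-w_0\la)$ arising from Proposition~\ref{prop:invol}(1), which intertwines the lowest weight vector $\eta_{w_0\la}$ with the canonical highest weight vector of ${}^\omega L(-w_0\la)$. Under this twist the map $\pi_\la$ is identified with the action map $\aA \U^-_{-\mu} \to \aA {}^\omega L(-w_0\la)_{-w_0\la - \mu}$. The latter is surjective by definition of the $\A$-form and is injective for $\la \gg 0$, because the kernel of the Verma projection $M(-w_0\la) \twoheadrightarrow L(-w_0\la)$ has trivial intersection with any fixed weight space $M(-w_0\la)_{-w_0\la -\mu}$ once $\la$ is sufficiently dominant (standard Shapovalov/stabilization argument in finite type).

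With $\pi_\la$ in hand, apply $\Upsilon$ to $\eta_{w_0\la}$. By Corollary~\ref{cor:Zform} we have $\Upsilon \cdot \eta_{w_0\la} \in \aA L(\la)$. Decomposing into weight spaces and using the weight-grading $\Upsilon_\nu \in \U^+_\nu$ from Theorem~\ref{thm:Upsilon}, the weight $(w_0\la+\mu)$-component of $\Upsilon\cdot \eta_{w_0\la}$ equals $\Upsilon_\mu \cdot \eta_{w_0\la}$, which therefore lies in $\aA L(\la)_{w_0\la + \mu}$. Choosing $\la$ sufficiently dominant for the given $\mu$ and applying $\pi_\la^{-1}$, we conclude $\Upsilon_\mu \in \aA \U^+_\mu$.

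The main technical ingredient is the isomorphism $\pi_\la$ for $\la \gg 0$, which is classical in Lusztig's framework; given this, the remainder of the argument is immediate. No serious obstacle is expected beyond the bookkeeping of choosing $\la$ large enough for each fixed $\mu$ so that both the weight-space identification and the preservation result of Corollary~\ref{cor:Zform} apply simultaneously. Notably, all the heavy lifting, via the $\imath$divided powers and the integrality of $\Upsilon$ acting on $\aA L(\la)$, has already been done; Theorem~\ref{thm:int} will then be a clean consequence, entirely bypassing the tedious rank-one case-by-case analysis of \cite{BW18a}.
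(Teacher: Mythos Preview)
Your proposal is correct and follows essentially the same approach as the paper's own proof: apply Corollary~\ref{cor:Zform} to the lowest weight vector of $L(\la)$ for $\la\gg 0$, and use that the action map $\aA\U^+_\mu \to \aA L(\la)_{w_0\la+\mu}$ is an isomorphism in this range. The paper states this in a single sentence, while you have helpfully unpacked why the action map is an isomorphism via the twist $L(\la)\cong{}^\omega L(-w_0\la)$; no substantive difference.
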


\begin{proof}
Follows by Corollary~\ref{cor:Zform} and applying $\Upsilon$ to the lowest weight vector $\xi_{-w_0\la} \in {}_\mA L(\la)$, for $\la\gg 0$ (i.e., $\la \in X^+$ such that $\langle i, \la \rangle \gg 0$ for each $i$). 
\end{proof}

\begin{rem} 
\label{rem:int}
 Theorem~\ref{thm:int}, which allows general parameters $\vs_i \in \Z[q,q^{-1}]$ as in \eqref{parameters}, generalizes \cite[Theorem~D]{BW18b}, which required $\vs_i \in \pm q^\Z$. Our current approach (which is based on the new $\imath$divided powers developed in Section~\ref{sec:iDP}) avoids the tedious case-by-case verification in the 8 cases of real rank one QSP in \cite[Appendix~A]{BW18b}.
\end{rem}

\subsection{$\imath$Canonical bases on modules}

\subsubsection{}
Let us first recall the following definition of based $\Ui$-modules in \cite[Definition~1]{BWW20}. Let $\mathbf{A}= \Q[[q^{-1}]] \cap \Qq$.   

 We call a $\Ui$-module $M$ a weight $\Ui$-module, if $M$ admits a direct sum decomposition $M = \oplus_{\lambda \in X_\imath} M_{\lambda}$ such that, for any $\mu \in Y^\imath$, $\lambda \in X_\imath$, $m \in M_{\lambda}$, we have $K_{\mu} m = q^{\langle \mu, \lambda \rangle} m$. 

\begin{definition}\label{ad:def:1}
Let $M$ be a weight $\Ui$-module over $\Qq$ with a given $\Qq$-basis $\B^\imath$. The pair $(M, \B^\imath)$ is called a based $\Ui$-module if the following conditions are satisified:
	\begin{enumerate}
		\item 	$\B^\imath \cap M_{\nu}$ is a basis of $M_{\nu}$, for  any $\nu \in X_\imath$;
		\item 	The $\mA$-submodule ${}_{\mA}M$ generated by $\B^\imath$ is stable under ${}_{\mA}\Uidot$;
		\item 	$M$ is $\imath$-involutive; that is, the $\Q$-linear involution $\ipsi: M \rightarrow M$ defined by 
		$\ipsi(q)= q^{-1}, \ipsi(b) = b$ for all $b \in \B^\imath$ 
		is compatible with the $\Uidot$-action, i.e., $\ipsi(um) = \ipsi(u) \ipsi(m)$, for all $u\in \Uidot, m\in M$;
		\item 	Let $L(M)$ be the $\mathbf{A}$-submodule of $M$ generated by $\B^\imath$. Then the image of $\B^\imath$ in $L(M)/ q^{-1}L(M)$ forms a $\Q$-basis in $L(M)/ q^{-1}L(M)$. 
	\end{enumerate}
\end{definition}	
We shall denote by $\mathcal L(M)$ the $\Z[q^{-1}]$-span of $\B^\imath$; then $\B^\imath$ forms a $\Z[q^{-1}]$-basis for $\mathcal L(M)$. We also have the obvious notions of based $\Ui$-submodules and based quotient $\Ui$-modules.

\subsubsection{}
We have the following generalization of \cite[Theorem~5.7]{BW18b} to QSP of Kac-Moody type.

\begin{thm}  
\label{thm:iCBmodule}
Let $(M,B)$ be a based $\U$-module whose weights are bounded above. Assume the involution $\ipsi$ of $M$ from Proposition~\ref{prop:compatibleBbar}  preserves the $\A$-submodule $\aM$. 

\begin{enumerate}
\item
The $\Ui$-module $M$ admits a unique basis (called $\imath$canonical basis)
$
B^\imath  := \{b^\imath \mid b \in B \}
$
which is $\ipsi$-invariant and of the form
\begin{equation} \label{iCB}
b^\imath = b +\sum_{b' \in B, b <  b'}
t_{b;b'} b',
\quad \text{ for }\;  t_{b;b'} \in q^{-1}\Z[q^{-1}].
\end{equation}

\item
$B^\imath$ forms an $\mA$-basis for the $\mA$-lattice ${}_\mA M$ (generated by $B$), and
forms a $\Z[q^{-1}]$-basis for the $\Z[q^{-1}]$-lattice $\mc{M}$ (generated by $B$).
\item		$(M, B^\imath)$ is a based  $\Ui$-module, where we call $B^\imath$ the $\imath$canonical basis of $M$.
\end{enumerate}
\end{thm}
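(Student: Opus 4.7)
The plan is to reduce the statement to a standard application of Lusztig's lemma on bar-invariant bases, with the essential work lying in verifying the triangularity of $\ipsi$ on the given basis $B$.

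First, since each $b\in B$ is $\psi$-invariant (as the canonical basis of a based $\U$-module is bar-invariant), I would rewrite $\ipsi(b)=\Upsilon\,\psi(b)=\Upsilon\,b$. Using the decomposition $\Upsilon=\sum_{\mu\in\N[\I]}\Upsilon_\mu$ with $\Upsilon_0=1$ and $\Upsilon_\mu\in\U^+_\mu$ from Theorem~\ref{thm:Upsilon}, the weight $|\Upsilon_\mu\,b|=|b|+\mu$ lies in a bounded-above set, so only finitely many $\mu$ contribute; thus $\ipsi(b)=b+\sum_{\mu>0}\Upsilon_\mu\,b$ is a finite sum. Expanding each $\Upsilon_\mu\,b$ in the basis $B$ yields
\[
\ipsi(b)\;=\;b+\sum_{b'\in B,\;|b'|>|b|}r_{b;b'}\,b',\qquad r_{b;b'}\in\Qq.
\]
The key input is the hypothesis that $\ipsi$ preserves ${}_\A M$: because $B$ is an $\A$-basis of ${}_\A M$, we immediately upgrade $r_{b;b'}\in\A$.

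Next I would install the partial order $b'<b$ iff $|b'|-|b|\in\N[\I]\setminus\{0\}$ on $B$. Under the weight-bounded-above hypothesis, each down-set $\{b':b'\le b\}$ is finite (only finitely many weights in $|b|+\N[\I]$ lie below the ceiling, and each weight space of $M$ is finite-dimensional as $M$ is a based $\U$-module). With this order, the computation above shows
\[
\ipsi(b)\;=\;b+\sum_{b'<b}r_{b;b'}\,b',\qquad r_{b;b'}\in\A,
\]
and $\ipsi$ is an anti-linear involution by Proposition~\ref{prop:compatibleBbar}. Lusztig's lemma (\cite[\S24.2.1]{Lu94}) then produces a unique family $\{b^\imath\}_{b\in B}$ that is $\ipsi$-invariant and satisfies \eqref{iCB}, yielding statement~(1). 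Statement~(2) follows from the unitriangular expansion of $B^\imath$ on $B$: unitriangularity with coefficients in $\A$ gives that $B^\imath$ is an $\A$-basis of ${}_\A M$, while the stronger condition $t_{b;b'}\in q^{-1}\Z[q^{-1}]$ gives that $B^\imath$ is a $\Z[q^{-1}]$-basis of $\mathcal M$.

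For statement~(3), I would verify the four conditions of Definition~\ref{ad:def:1} in turn. The $X_\imath$-weight compatibility of $B^\imath$ follows from $\Upsilon_\mu\neq 0\Rightarrow \inv(\mu)=-\mu$ in Theorem~\ref{thm:Upsilon}, which forces $\bar\mu=0$ in $X_\imath$ so that the expansion of $b^\imath$ preserves the $X_\imath$-weight of $b$. The stability of ${}_\A M=\sum_b \A\, b^\imath$ under ${}_\A\Uidot$ is immediate from $_\A\Uidot\subset{}_\A\Udot$ together with the fact that $B$ makes $M$ an integral ${}_\A\U$-module. The $\imath$-involutivity with respect to $\ipsi$ is already Proposition~\ref{prop:compatibleBbar}. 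Finally, the congruence $b^\imath\equiv b\pmod{q^{-1}\mathcal L(M)}$ in \eqref{iCB} shows that $B^\imath$ and $B$ have the same image in $L(M)/q^{-1}L(M)$, yielding the $\Q$-basis condition.

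The main obstacle is really packaged into the hypothesis: once we know $\ipsi$ preserves ${}_\A M$ (the content of Corollary~\ref{cor:Zform} and Corollary~\ref{cor:tensor}, which in turn relied on the integrality of the $\imath$divided powers and Lemma~\ref{lem:surjA1}), the remainder of the proof is the standard Lusztig-lemma machine. The only delicate point I expect is a careful bookkeeping check that the weight-based partial order is indeed interval-finite in the Kac-Moody setting; this is where the weight-bounded-above assumption is unavoidable and where the argument would fail for an arbitrary weight module.
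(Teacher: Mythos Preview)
Your proposal is correct and follows the same approach that the paper (implicitly) adopts by citing \cite[Theorem~5.7]{BW18b}: unitriangularity of $\ipsi=\Upsilon\circ\psi$ on $B$ with $\A$-coefficients, followed by Lusztig's lemma \cite[24.2.1]{Lu94}, then the routine verification of Definition~\ref{ad:def:1}. The only point worth tightening is your interval-finiteness argument: finite-dimensionality of weight spaces is not part of the definition of a based $\U$-module, but you do not actually need it---the weight-bounded-above hypothesis gives the descending chain condition for your order, and $\Upsilon b\in M$ is automatically a finite $B$-expansion, which together suffice for the inductive construction.
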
  

\subsubsection{}
 Recall from Theorem~\ref{thm:based-Li} the based $\U$-submodule $L(w\lambda, \mu)$, for $\lambda, \mu \in X^+$, and $w\in W$.
 
 \begin{thm} \label{thm:iCBbased}
  Let $\lambda, \mu, \lambda_i \in X^+$ for $1 \le i \le \ell$, and $w\in W$. 
 	\begin{enumerate}	
		\item 	$L(\lambda_1) \otimes \ldots \otimes L(\la_\ell)$ is a based $\Ui$-module, with the  $\imath$canonical basis defined as Theorem~\ref{thm:iCBmodule}. 
		\item 	$L(w\lambda, \mu)$ is a based $\Ui$-submodule of $L(\lambda) \otimes L(\mu)$.
	\end{enumerate}
 \end{thm}
 
\begin{proof}
It suffices to verify the assumptions of Theorem~\ref{thm:iCBmodule}. Indeed, it is clear that both $L(\lambda_1) \otimes \ldots \otimes L(\la_\ell)$ and $L(w\lambda, \mu)$ have weights bounded above. It follows from Corollary~\ref{cor:tensor} that $\ipsi$ preserves the $\mA$-submodule ${}_\mA L(\la_1) \otimes_\mA \ldots \otimes_\mA {}_\mA L(\la_\ell)$, and hence preserves ${}_\mA L(w\lambda, \mu)$ too. Therefore both $L(\lambda_1) \otimes \ldots \otimes L(\la_\ell)$ and $L(w\lambda, \mu)$ are based $\Ui$-modules.  It is obvious that $L(w\lambda, \mu)$ is a based $\Ui$-submodule of $L(\lambda) \otimes L(\mu)$. The theorem is proved. 
\end{proof}

For $\lambda, \mu \in X^+$, we shall denote by 
\[
L^\imath(\la,\mu) =L(w_\bullet \la, \mu), 
\]
which is a based $\U$-module and also a based $\Ui$-module thanks to Theorem~\ref{thm:based-Li}.

\subsection{The element $\Theta^\imath$}
%

We define 
\begin{equation}\label{ad:eq:1}
\Theta^\imath = \Delta (\Upsilon) \cdot \Theta \cdot (\Upsilon^{-1} \otimes \id).
\end{equation}
Note that $\Theta^\imath$ only lies in a completion of $\U \otimes \U$, whose definition can be found in \cite[Section~3.1]{Ko20}. Even though only finite types were considered in \cite{Ko20}, the generalization to Kac-Moody is straightforward.

We can write 
\begin{equation}
  \label{eq:Thetamu}
\Theta^\imath = \sum_{\mu \in \N\I}\Theta^\imath_{\mu}, \qquad
\text{ where } \Theta^\imath_{\mu} \in  \U \otimes \U^+_\mu.
\end{equation}
The following result first appeared in \cite[Proposition~3.5]{BW18a} for the quantum symmetric pairs of (quasi-split) type AIII/AIV.
\begin{lem}\cite[Proposition~3.10]{Ko20}\label{ad:lem:1}
We have $\Theta^\imath_{\mu} \in \Ui \otimes \U^+_\mu$, for all $\mu$. In particular,  we have $\Theta^\imath_{0} = 1\otimes 1$. 
\end{lem}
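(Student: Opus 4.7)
The strategy has three pieces: a direct calculation of the weight-zero component, an intertwining identity for $\Theta^\imath$, and an induction on height.

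The identity $\Theta^\imath_0 = 1 \otimes 1$ will follow by unwinding \eqref{ad:eq:1}. Since $\Delta(\U^+_\nu) \subseteq \sum_{\alpha+\beta=\nu} \U^+_\alpha \tK_\beta \otimes \U^+_\beta$, the component of $\Delta(\Upsilon)$ lying in second-tensor-weight zero is exactly $\Upsilon \otimes 1$. Combined with $\Theta_0 = 1 \otimes 1$ and the fact that $\Upsilon^{-1} \otimes 1$ sits entirely in second-tensor-weight zero, extracting this weight component of $\Theta^\imath = \Delta(\Upsilon)\Theta(\Upsilon^{-1} \otimes 1)$ yields $(\Upsilon \otimes 1)(1 \otimes 1)(\Upsilon^{-1} \otimes 1) = 1 \otimes 1$.

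The first main step is to derive the intertwining identity
\[
\Delta(u) \cdot \Theta^\imath \;=\; \Theta^\imath \cdot (\ipsi \otimes \psi)\bigl(\Delta(\ipsi(u))\bigr), \qquad u \in \Ui.
\]
Three ingredients feed into the derivation. First, Theorem~\ref{thm:Upsilon} applied to $\ipsi(u) \in \Ui$ yields $u\Upsilon = \Upsilon\psi(\ipsi(u))$, which lets $u$ slide past $\Upsilon$. Second, Lusztig's intertwiner equation $\Delta(x)\Theta = \Theta\bar\Delta(x)$ with $\bar\Delta = (\psi \otimes \psi)\Delta\psi$ converts $\Delta(\psi\ipsi(u))\Theta$ into $\Theta(\psi \otimes \psi)\Delta(\ipsi(u))$. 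Third, the coideal property $\Delta(\Ui) \subseteq \Ui \otimes \U$ combined with the consequence $\psi(v)\Upsilon^{-1} = \Upsilon^{-1}\ipsi(v)$ for $v \in \Ui$ allows $\Upsilon^{-1} \otimes 1$ to commute past $(\psi \otimes \psi)\Delta(\ipsi(u))$, converting $\psi$ to $\ipsi$ on the first factor.

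With this identity in hand, I would proceed by induction on $\hgt(\mu)$ to show $\Theta^\imath_\mu \in \Ui \otimes \U^+_\mu$. Fix a vector-space complement $\mathfrak c$ of $\Ui$ in $\U$ compatible with the Letzter--Kolb triangular decomposition, and project to $\mathfrak c \otimes \U^+_\mu$; the claim amounts to the vanishing of this projection. Taking $u = \ff_i$ (so $\ipsi(\ff_i) = \ff_i$ by Lemma~\ref{lem:bar}) in the intertwining identity and extracting the component whose second tensor factor has weight $\mu$, the contributions from $\Theta^\imath_\nu$ with $\hgt(\nu) < \hgt(\mu)$ already lie in $\Ui \otimes \U^+$ by the inductive hypothesis; the leading terms of $\Delta(\ff_i)$ and of $(\ipsi \otimes \psi)\Delta(\ff_i)$ act by elements of $\Ui \otimes \U^0$, forcing the projection of $\Theta^\imath_\mu$ onto $\mathfrak c \otimes \U^+_\mu$ to vanish.

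The main obstacle will be the bookkeeping in the inductive step: both tensor factors carry weight data, and one must carefully track how $(\ipsi \otimes \psi)\Delta(\ff_i)$ decomposes and interacts with the various $\Theta^\imath_\nu$ already known to lie in $\Ui \otimes \U^+$. A cleaner alternative route would be to pair both sides of the intertwining identity against test elements in a tensor product of Verma modules (where $\Upsilon$ and $\Upsilon^{-1}$ act by well-defined operators) and reduce the statement to the vanishing of scalar coefficients via the non-degeneracy of the evaluation pairing, thereby bypassing the explicit choice of complement $\mathfrak c$.
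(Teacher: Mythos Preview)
The paper does not give its own proof of this lemma; it simply cites \cite[Proposition~3.6]{Ko17} and remarks (with confirmation from Kolb) that the argument there carries over verbatim to the Kac-Moody setting. Your proposal is essentially a reconstruction of that argument, which in turn goes back to \cite[Proposition~3.5]{BW18a} in the quasi-split type AIII case.

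Your derivation of the intertwining identity
\[
\Delta(u)\,\Theta^\imath \;=\; \Theta^\imath\,(\ipsi\otimes\psi)\Delta(\ipsi(u)),\qquad u\in\Ui,
\]
is correct and is exactly the core ingredient used in \cite{Ko17}; likewise your computation of $\Theta^\imath_0=1\otimes 1$ is fine. Where your sketch is thinner than the cited proof is the inductive step. In \cite{Ko17} the argument does not proceed by choosing a complement $\mathfrak c$ and projecting; rather, one feeds the explicit coproduct formula for $\ff_i$ (and the analogous generators) into the intertwining identity, compares components with fixed second-factor weight, and reads off a recursion expressing the first tensor factor of $\Theta^\imath_\mu$ as an explicit $\Ui$-combination of first tensor factors of lower $\Theta^\imath_\nu$. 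Your phrase ``leading terms \dots\ act by elements of $\Ui\otimes\U^0$, forcing the projection to vanish'' is not quite how the argument runs: one needs to verify that \emph{all} terms appearing on both sides, after the inductive hypothesis is invoked, leave the first factor inside $\Ui$, not merely the leading ones. This is where the explicit coideal coproduct formulas (cf.\ \cite[\S7]{Ko14}) and the bookkeeping you allude to are actually used. Your ``cleaner alternative route'' via evaluation on tensor products of modules is not the route taken in \cite{Ko17} and would require its own justification of non-degeneracy.
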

The proof of \cite[Proposition~3.10]{Ko20} remains valid in the Kac-Moody setting. (We thank Kolb for the confirmation.)

\begin{thm}\label{thm:Uidotco}
Let $M$ be a based $\Ui$-module, and $\lambda \in X^+$. Then $\ipsi \stackrel{\rm def}{=} \Theta^\imath \circ (\ipsi \otimes \psi)$ is an anti-linear involution on $M \otimes L(\lambda)$, and $M \otimes L(\lambda)$ is  a based $\Ui$-module with a bar involution $\ipsi$.
\end{thm}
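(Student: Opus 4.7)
The plan is to verify the following, in order: (i) $\ipsi = \Theta^\imath \circ (\ipsi_M \otimes \psi_L)$ is a well-defined anti-linear operator on $M \otimes L(\lambda)$; (ii) $\ipsi^2 = \id$; (iii) $\ipsi$ is compatible with the $\Uidot$-action; and (iv) the resulting data satisfy Definition~\ref{ad:def:1}, yielding the $\imath$canonical basis. For (i), I would use $\Theta^\imath = \sum_\mu \Theta^\imath_\mu$ with $\Theta^\imath_\mu \in \Ui \otimes \U^+_\mu$ (Lemma~\ref{ad:lem:1}): since $L(\lambda)$ has weights bounded above, only finitely many $\mu$ contribute when applied to any vector; anti-linearity is then immediate from the anti-linearity of $\ipsi_M$ and $\psi_L$.

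The heart of the argument is (ii) and (iii). A short computation, using $\ipsi_M^2 = \id$, $\psi_L^2 = \id$, and the fact that the first tensor factor of each $\Theta^\imath_\mu$ lies in $\Ui$ (so that $\ipsi_M$ acts on those coefficients exactly via $\ipsi$), reduces (ii) to the identity
\[
\Theta^\imath \cdot (\ipsi \otimes \psi)(\Theta^\imath) = 1 \otimes 1
\]
in the appropriate completion. I would prove this by substituting $\Theta^\imath = \Delta(\Upsilon) \cdot \Theta \cdot (\Upsilon^{-1} \otimes 1)$ and combining Lusztig's identity $\Theta \cdot (\psi \otimes \psi)(\Theta) = 1$ with the intertwining property $\ipsi(u)\Upsilon = \Upsilon \psi(u)$ of Theorem~\ref{thm:Upsilon}; the $\Upsilon$-factors at the outer ends then cancel in pairs. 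Analogously, (iii) reduces to the intertwining identity
\[
\Delta(\ipsi(u)) \cdot \Theta^\imath = \Theta^\imath \cdot (\ipsi \otimes \psi)\Delta(u), \qquad u \in \Ui,
\]
which would follow from the coideal property $\Delta(\Ui) \subset \Ui \otimes \U$, the classical $\Theta$-intertwining $\Delta(\psi(x)) \cdot \Theta = \Theta \cdot (\psi \otimes \psi)\Delta(x)$, and Theorem~\ref{thm:Upsilon} applied componentwise to $\Delta(u)$.

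For (iv), I would verify the four conditions of Definition~\ref{ad:def:1} against the pseudo-basis $\{b \otimes b' \mid b \in \B^\imath(M),\, b' \in \B(L(\lambda))\}$. The key observations are: $\Theta^\imath_0 = 1 \otimes 1$ while each $\Theta^\imath_\mu$ with $\mu \neq 0$ strictly raises the weight in the second tensor factor, giving upper unitriangularity of $\ipsi$ on this pseudo-basis; and the integral $\mA$-form $\aM \otimes_\A \aL(\lambda)$ is $\ipsi$-stable, combining the $\ipsi_M$-stability of $\aM$, the $\psi_L$-stability of $\aL(\lambda)$, and the $\mA$-integrality of $\Theta^\imath$ when acting on such tensor products (the last point being the tensor-product analogue of Corollary~\ref{cor:tensor}, provable by the same $\imath$divided-power machinery of Section~\ref{sec:iDP}). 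Lusztig's standard lemma then produces the unique $\ipsi$-invariant basis with correction coefficients in $q^{-1}\Z[q^{-1}]$, completing the construction.

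The main obstacle will be establishing the key identity $\Theta^\imath \cdot (\ipsi \otimes \psi)(\Theta^\imath) = 1 \otimes 1$ rigorously, because its three factors $\Delta(\Upsilon)$, $\Theta$, and $\Upsilon^{-1} \otimes 1$ live in distinct completions of $\U \otimes \U$: only their triple product is known a priori to lie in $\widehat{\Ui \otimes \U^+}$ via Lemma~\ref{ad:lem:1}. Keeping track of where $\ipsi \otimes \psi$ may be legitimately applied, and ensuring that the cancellations take place at the level of operators on $M \otimes L(\lambda)$ rather than only formally, is where the Kac-Moody generality (as opposed to the finite-type setting of \cite{BW18a, BW18b}) forces the most care.
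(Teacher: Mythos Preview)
Your steps (i)--(iii) match the paper's approach; the paper simply cites \cite[Proposition~3.13]{BW18a} for the involution and intertwining properties, and your unwinding of $\Theta^\imath = \Delta(\Upsilon)\cdot \Theta \cdot (\Upsilon^{-1}\otimes 1)$ is exactly what that reference does. You have, however, misidentified the main obstacle: the identity $\Theta^\imath \cdot (\ipsi\otimes\psi)(\Theta^\imath)=1\otimes 1$ is routine once Lemma~\ref{ad:lem:1} is available, and your worry about completions is handled by working with operators on $M\otimes L(\lambda)$ throughout.

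The real work, and the place where your sketch has a gap, is the integrality in (iv). Your claim that the $\mA$-integrality of $\Theta^\imath$ on $\aM\otimes_\A \aL(\lambda)$ is ``the tensor-product analogue of Corollary~\ref{cor:tensor}, provable by the same $\imath$divided-power machinery'' skips the essential adaptation. Corollary~\ref{cor:tensor} and Proposition~\ref{prop:quasi-KZ} are stated for based \emph{$\U$-modules} $M$, and their proofs use that $\Upsilon\in\widehat{\U}^+$ acts on $M$. Here $M$ is only a based $\Ui$-module, so $\Upsilon$ does not act on $M$ at all, and the three-factor decomposition of $\Theta^\imath$ cannot be applied termwise to get integrality. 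The paper instead argues as follows: for each $b\in \B^\imath(M)$ one has $\ipsi(b\otimes \eta_\lambda)=b\otimes\eta_\lambda$ (since $\Theta^\imath_\mu$ with $\mu\neq 0$ annihilates $\eta_\lambda$ in the second slot); one then re-runs the filtration argument of Lemma~\ref{lem:surjA1} with the $\imath$canonical basis $\B^\imath(M)$ in place of a $\U$-canonical basis to obtain $\sum_{b\in \B^\imath(M)} \pi_b({}_\mA'\Uidot)=\aM\otimes_\A \aL(\lambda)$, where $\pi_b(u)=u(b\otimes\eta_\lambda)$; finally $\imath$-involutivity together with $\ipsi$-stability of the $\imath$divided powers gives $\ipsi(u(b\otimes\eta_\lambda))=\ipsi(u)(b\otimes\eta_\lambda)\in \aM\otimes_\A \aL(\lambda)$. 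So the $\imath$divided powers do enter, but through this fixed-point-plus-surjectivity bootstrap rather than via any direct estimate on $\Theta^\imath$.
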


\begin{proof}
The anti-linear operator $\ipsi = \Theta^\imath \circ (\ipsi \otimes \psi): M \otimes L(\lambda) \rightarrow M \otimes L(\lambda)$ is well defined thanks to Lemma~\ref{ad:lem:1} and the fact that the weights of $L(\lambda)$ are bounded above. Then entirely similar to \cite[Proposition~3.13]{BW18a}, we see that $\ipsi^2=1$ and $M \otimes L(\lambda)$ is $\imath$-involutive in the sense of Definition~\ref{ad:def:1}(3). 

We now prove that $\ipsi$ preserves the $\mA$-submodule ${}_\mA M \otimes_{\mA} {}_\mA L(\lambda)$. By assumption, $(M, \B^\imath(M))$ is a based $\Ui$-module. For any $b \in \B^\imath(M)$, we define 
\[
\pi_b: {}_\mA \Uidot \rightarrow {}_\mA M \otimes_{\mA} {}_\mA L(\lambda), \quad u \mapsto u (b \otimes \eta_{\la}).
\]
Indeed, $\pi_b$ is well defined, since $u \in {}_{\mA} \Uidot$ if and only if $u \cdot \one_{\mu} \in {}_{\mA} \Udot$ for each $\mu \in X$; cf. \cite[Lemma~ 3.20]{BW18b} (see Definition~\ref{def:mAUidot} for ${}_{\mA} \Uidot$). 


Note that $\ipsi (b \otimes \eta_{\lambda}) =  (b \otimes \eta_{\lambda})$ for any $b \in \B^\imath(M)$. Following the proof of Lemma~\ref{lem:surjA1}, we have $\sum_{b \in \B^\imath(M)} \pi_b( {}_\mA '\Uidot ) =  {}_\mA M \otimes_{\mA} {}_\mA L(\lambda)$. Hence we also have $\sum_{b \in \B^\imath(M)} \pi_b( {}_\mA \Uidot ) =  {}_\mA M \otimes_{\mA} {}_\mA L(\lambda)$, since ${}_\mA '\Uidot \subset {}_\mA \Uidot$. Then the same strategy of Proposition~\ref{prop:quasi-KZ} implies that $\ipsi$ preserves the $\mA$-submodule ${}_\mA M \otimes_{\mA} {}_\mA L(\lambda)$.

We write  $\B = \{b^- \eta_{\la} \vert b \in \B(\lambda)\}$ for the canonical basis of $L(\lambda)$. Following the same argument as for \cite[Theorem~4]{BWW20}, we conclude that: 
\begin{enumerate}
\item
for $b_1 \in \B^\imath, b_2\in \B$, there exists a unique element $b_1\diamondsuit_\imath b_2$ which is $\ipsi$-invariant such that $b_1\diamondsuit_\imath b_2\in b_1\otimes b_2 +q^{-1}\Z[q^{-1}] \B^\imath \otimes \B$;
\item
we have $b_1\diamondsuit_\imath b_2 \in b_1\otimes b_2 +\sum\limits_{(b'_1,b'_2) \in \B^\imath \times \B, |b_2'| < |b_2|} q^{-1}\Z[q^{-1}] \, b_1' \otimes b_2'$;
\item
$\B^\imath \diamondsuit_\imath \B :=\{b_1\diamondsuit_\imath b_2 \mid b_1 \in \B^\imath, b_2\in \B \}$ forms a $\Qq$-basis for  $M \otimes L(\lambda)$, an $\mA$-basis for ${}_\mA M  \otimes_{\mA} {}_\mA L(\lambda)$, and a $\Z[q^{-1}]$-basis for $\mathcal L(M)  \otimes_{\Z[q^{-1}]} \mathcal L(\lambda)$; 
\item
$(M\otimes L(\lambda), \B^\imath \diamondsuit_\imath \B)$ is a based $\Ui$-module.
\end{enumerate}
\end{proof}

\section{Canonical bases on the modified $\imath$quantum groups}
  \label{sec:iCBU}

In this section, we formulate the main definition and theorems on canonical bases on the modified $\imath$quantum groups. The formulations are straightforward generalizations of the finite type counterparts in \cite[Section 6]{BW18b} (with mild modifications), and the reader is encouraged to be familiar with  \cite[Section 6]{BW18b} first. Thanks to the new results established in the previous sections, they are now valid in the setting of QSP of Kac-Moody type. 

\subsection{The modified $\imath$quantum groups}

 Recall the partial order $\leq$ on the weight lattice $X$ in \eqref{eq:leq}. The following proposition generalizes \cite[Propositions~6.8, 6.12, 6.13, 6.16]{BW18b} to Kac-Moody types.
 
 \begin{prop}\label{prop:props}
 Let $\lambda, \mu \in X^+$ and let $\zeta = \wb \lambda +\mu$ and $\zeta_\imath = \overline{\zeta}$. 
 	\begin{enumerate}
 		\item	
 		The $\imath$canonical basis of $L^{\imath}(\lambda, \mu)$ is the basis $\B^\imath(\la, \mu) =\big\{ (b_1 \diamondsuit_{\zeta_\imath} b_2 )_{w_\bullet\la,\mu}^{\imath} \vert 
	(b_1, b_2) \in \B_{\Iblack} \times \B \big\} \backslash \{0\}$, where 
 $(b_1 \diamondsuit_{\zeta_\imath} b_2 )_{w_\bullet\la,\mu}^{\imath}$ is $\ipsi$-invariant and lies in 
	\begin{equation*} 
	 (b_1 \diamondsuit_{\zeta} b_2 ) (\eta_\lambda^\bullet \otimes \eta_{\mu})    +\!\!\! \sum_{|b_1'|+|b_2'| \le |b_1|+|b_2|} \!\!\!q^{-1}\Z[q^{-1}] (b'_1 \diamondsuit_{\zeta} b'_2 ) (\eta_\lambda^\bullet \otimes \eta_{\mu}). 
	\end{equation*}
 	\item We have the projective system $\big \{ L^\imath(\lambda+\nu^\tau, \mu+\nu)  \big \}_{\nu \in X^+}$ of $\Ui$-modules, where 
\begin{equation*} 
\pi_{\nu+ \nu_1, \nu_1}: L^{\imath} (\lambda+\nu^\tau+\nu_1^\tau, \mu+\nu+\nu_1) \longrightarrow L^{\imath} (\lambda+\nu^\tau, \mu+\nu), \quad \nu, \nu_1 \in X^+,
\end{equation*}
is the unique homomorphism of $\Ui$-modules such that 
\[\pi(\etab_{\lambda+\nu^\tau+\nu_1^\tau} \otimes \eta_{\mu+\nu+\nu_1}) = \etab_{\lambda+\nu^\tau} \otimes \eta_{\mu+\nu}.\] 

	\item The projective system in (2) is asymptotically based in the following sense: for fixed $(b_1, b_2) \in \B_{\Iblack} \times \B$ and any $\nu_1 \in X^+$, as long as $\nu \gg 0$, we have 
	\[ 
\pi_{\nu+ \nu_1, \nu_1} \big((b_1 \diamondsuit_{\zeta_\imath} b_2 )_{w_\bullet(\lambda+\nu^\tau+\nu_1^\tau),\mu+\nu+\nu_1}^{\imath}\big) =  \big((b_1 \diamondsuit_{\zeta_\imath} b_2 )_{w_\bullet(\lambda+\nu^\tau),\mu+\nu}^{\imath}\big).
	\]
 \end{enumerate}
 \end{prop}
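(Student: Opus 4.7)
The plan is to adapt the finite-type argument of \cite[Section~6]{BW18b} to the Kac-Moody setting, with Theorem~\ref{thm:based-Li} substituting for the identification $L(\lambda) \cong \dL(-w_0\lambda)$ that was available in finite type, and Corollary~\ref{cor:iCBbased} providing the based $\Ui$-submodule structure at the outset.

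For part (1), I would first invoke Corollary~\ref{cor:iCBbased} to see that $L^\imath(\lambda,\mu) = L(w_\bullet\lambda,\mu)$ is a based $\Ui$-submodule of the based $\Ui$-module $L(\lambda)\otimes L(\mu)$, whose $\imath$canonical basis is described by the $\diamondsuit_\imath$ construction established at the end of Section~\ref{sec:iCBM}. This reduces the question to identifying the subset of the ambient $\imath$canonical basis that survives in $L^\imath(\lambda,\mu)$. Since $\eta^\bullet_\lambda = \eta_{w_\bullet\lambda}$ is precisely the canonical generator of the $\U_\Iblack$-cell $L_\Iblack(w_\bullet\lambda) \subset L(\lambda)$, the based submodule $L^\imath(\lambda,\mu)$ is naturally parametrized by $\B_\Iblack \times \B$. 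For each pair $(b_1,b_2)$, the vector $(b_1 \diamondsuit_\zeta b_2)(\eta^\bullet_\lambda \otimes \eta_\mu)$ is almost $\ipsi$-invariant, in that $\ipsi$ acts as the identity modulo a finite $\Z[q^{\pm 1}]$-combination of $(b'_1\diamondsuit_\zeta b'_2)(\eta^\bullet_\lambda \otimes \eta_\mu)$ with $|b'_1|+|b'_2|\le |b_1|+|b_2|$; a standard bar-triangularity correction (in the spirit of \cite[Lemma~24.2.1]{Lu94}) then yields the unique $\ipsi$-invariant element $(b_1 \diamondsuit_{\zeta_\imath} b_2)^\imath_{w_\bullet\lambda,\mu}$ with the stated leading term, and discarding zero elements gives $\B^\imath(\lambda,\mu)$.

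For part (2), the plan is to build $\pi_{\nu+\nu_1,\nu_1}$ out of the structural based $\U$-homomorphisms furnished by the proof of Theorem~\ref{thm:based-Li}. That proof exhibits each $L^\imath(\cdot,\cdot)$ as the image of a composition $\phi'\chi':\dL(\cdot)\otimes L(\cdot+\cdot)\to L(\cdot)\otimes L(\cdot)$ involving extremal weight modules. I would verify that these composition morphisms can be assembled compatibly as $\nu$ varies, producing a coherent family of $\U$-module homomorphisms on the images, and then restrict to obtain $\pi_{\nu+\nu_1,\nu_1}$ on the $\Ui$-submodules, with the cyclic generator preserved at every stage by construction. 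Every $\U$-module map is a $\Ui$-module map by restriction, so $\Ui$-equivariance is automatic. For part (3), asymptotic basedness follows the blueprint of Lusztig's construction of the canonical basis on $\Udot$ \cite[Part~IV]{Lu94}: for fixed $(b_1,b_2)$ the element $(b_1\diamondsuit_{\zeta_\imath} b_2)^\imath$ is pinned down by a finite set of lower-order corrections, and for $\nu\gg 0$ the canonical basis of $\Udot$ acts stably on the generator through the relevant weight strata, so the $q^{-1}\Z[q^{-1}]$-coefficients produced by the bar-triangularity recursion agree on both sides of $\pi$.

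The main obstacle I expect is the well-definedness and compatibility of $\pi_{\nu+\nu_1,\nu_1}$ in part (2). The finite-type proof relies on $L(\lambda)\cong\dL(-w_0\lambda)$ and the resulting symmetric treatment of both tensor factors, neither of which is available in Kac-Moody generality. The delicate point will be ensuring that the homomorphisms assembled from Theorem~\ref{thm:based-Li} are truly coherent across different $\nu$ --- in particular, that the $\dL(\nu)$-factorization through extremal weight modules can be chosen naturally enough that the cyclic vectors $\eta^\bullet_{\lambda+\nu^\tau+\nu_1^\tau}\otimes\eta_{\mu+\nu+\nu_1}$ map exactly to $\eta^\bullet_{\lambda+\nu^\tau}\otimes\eta_{\mu+\nu}$ as required. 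Once this coherence is in place, (1) and (3) are formal consequences of Corollary~\ref{cor:iCBbased} together with the triangularity of the $\imath$canonical basis construction.
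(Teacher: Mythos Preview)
Your treatment of part~(1) is correct and matches the paper: it is indeed a direct reformulation of Corollary~\ref{cor:iCBbased} together with the bar-triangularity mechanism.

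There is a genuine gap in your plan for part~(2). You propose to assemble $\pi_{\nu+\nu_1,\nu_1}$ from the $\U$-module homomorphisms appearing in the proof of Theorem~\ref{thm:based-Li}, and then note that ``every $\U$-module map is a $\Ui$-module map by restriction.'' But $\pi_{\nu+\nu_1,\nu_1}$ is \emph{not} a $\U$-module homomorphism: the cyclic generators $\etab_{\lambda+\nu^\tau+\nu_1^\tau}\otimes\eta_{\mu+\nu+\nu_1}$ and $\etab_{\lambda+\nu^\tau}\otimes\eta_{\mu+\nu}$ have distinct $X$-weights (they only agree after projection to $X_\imath$), so no $\U$-equivariant map can carry one to the other. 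Theorem~\ref{thm:based-Li} only shows that each individual $L^\imath(\lambda,\mu)$ is a based $\U$-submodule; it furnishes no maps between the $L^\imath$ for varying $\nu$.

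The paper's construction of $\pi$ (following \cite[Proposition~6.12]{BW18b}) instead uses the $\Ui$-module isomorphism $\mc T: L(\lambda)\to{}^\omega L(\lambda^\tau)$ of Theorem~\ref{thm:mcT}, which sends $\etab_\lambda\mapsto\xi_{-\lambda^\tau}$. Conjugating by $\mc T\otimes\id$ converts each $L(\lambda+\nu^\tau)\otimes L(\mu+\nu)$ into ${}^\omega L(\lambda^\tau+\nu)\otimes L(\mu+\nu)$ as $\Ui$-modules, and on the latter one has Lusztig's standard $\U$-module contractions $\xi_{-\lambda^\tau-\nu}\otimes\eta_{\mu+\nu}\mapsto\xi_{-\lambda^\tau}\otimes\eta_\mu$. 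The composite is the desired $\Ui$-module map $\pi$. This step is where the quasi-$K$-matrix genuinely enters, and it cannot be bypassed by the purely $\U$-module input of Theorem~\ref{thm:based-Li}.

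Once (2) is repaired along these lines, your outline for (3) is in the right spirit; the one point the paper singles out is that the finite-dimensionality of $L(\nu^\tau,\nu)$ used in \cite[Proposition~6.16]{BW18b} must be replaced by the observation that only finitely many coefficients $a(b',b'')$ in the bar-correction are nonzero, which suffices for the stabilization argument.
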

 
 \begin{proof}
 Claim (1) is just a reformulation of Theorem~\ref{thm:iCBbased}(2). Claim (2) follows by the same proof as \cite[Proposition~6.12]{BW18b}. 
 
 Claim (3) is essentially the same as \cite[Proposition~6.16]{BW18b} with a mild modification which we now explain. 
 We used the finite-dimensionality of the module $L(\nu^\tau, \nu)$ four lines below \cite[(6.5)]{BW18b}. But this can be replaced by the fact that only finitely many $a(b', b'')$ are non-zero {\em loc cit}. The rest is exactly the same. 
Note that the (fixed) parameters plays no essential role here as we are taking  $\nu \gg 0$.
 \end{proof}

\begin{thm}  \cite[Theorem 6.17]{BW18b}
  \label{thm:iCBUi}
		Let ${\zeta_\imath} \in X_{\imath}$ and $(b_1, b_2) \in B_{\Iblack} \times B$. 
	\begin{enumerate}
		\item	
		There is a unique element $u =b_1 \diamondsuit^\imath_{\zeta_\imath} b_2  \in \Uidot$ such that 
			\[
				u (\eta^{\bullet}_\lambda \otimes \eta_\mu) 
				= (b_1 \diamondsuit_{\zeta_\imath} b_2)_{\wb\lambda,\mu}^\imath				\in L^{\imath} (\lambda, \mu), 
			\]
for all $\lambda, \mu \gg0$ with $\overline{\wb \lambda+\mu} = {\zeta_\imath}$.
		\item	The element $b_1 \diamondsuit^\imath_{\zeta_\imath} b_2$ is $\ipsi$-invariant.
		\item	The set $\Bdot^\imath =\{ b_1 \diamondsuit^\imath_{\zeta_\imath} b_2 \big \vert {\zeta_\imath} \in X_\imath, (b_1, b_2) \in B_{\Iblack} \times B \}$ 
		forms a $\Qq$-basis of $\Uidot$ and an $\mA$-basis of ${}_\mA\Uidot$.
	\end{enumerate}
\end{thm}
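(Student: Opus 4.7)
\bigskip

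\noindent\textbf{Proof proposal.} The plan is to follow closely the finite-type proof in \cite[Theorem~6.17]{BW18b}, replacing each finite-type input by its Kac-Moody counterpart established in the preceding sections, most notably the projective system of based $\Ui$-modules furnished by Proposition~\ref{prop:props}, together with the isomorphism \eqref{eq:comp3} and the integrality result Corollary~\ref{cor:tensor}.

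First I would construct $b_1\diamondsuit^\imath_{\zeta_\imath} b_2$ as the element of $\Uidot \one_{\zeta_\imath}$ which realizes the compatible family of $\imath$canonical basis elements $(b_1\diamondsuit_{\zeta_\imath} b_2)^\imath_{w_\bullet(\lambda+\nu^\tau),\,\mu+\nu}$ across the projective system of Proposition~\ref{prop:props}(2). The asymptotic stabilization in Proposition~\ref{prop:props}(3) says that for $\lambda,\mu \gg 0$ with $\overline{\wb\lambda+\mu}=\zeta_\imath$, these elements fit coherently under the transition maps, so one only needs to lift them to $\Uidot$. The surjectivity needed to obtain a lift is Corollary~\ref{cor:surjA2}, applied to $M=\dL(\la)$ (or its based summand $L^\imath(\la,\mu) \subset \dL(\la)\otimes L(\mu)$). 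Uniqueness of the lift, and independence of the choice of $\lambda,\mu$, follows from the isomorphism $p_\imath: \Uidot\one_{\zeta_\imath}\xrightarrow{\sim}\Padot\one_\lambda$ of \eqref{eq:comp3}, because an element of $\Uidot\one_{\zeta_\imath}$ which annihilates $\eta_\la^\bullet\otimes\eta_\mu$ for all $\la,\mu\gg 0$ must project to zero in every $\Padot\one_\la$ and hence vanish. This establishes~(1).

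Part~(2) is then automatic: each $(b_1\diamondsuit_{\zeta_\imath} b_2)^\imath_{w_\bullet\la,\mu}$ is $\ipsi$-invariant in $L^\imath(\la,\mu)$ by construction, so applying $\ipsi$ to the identity $u(\eta_\la^\bullet\otimes\eta_\mu)=(b_1\diamondsuit_{\zeta_\imath} b_2)^\imath_{w_\bullet\la,\mu}$ and using $\ipsi(\eta_\la^\bullet\otimes\eta_\mu)=\eta_\la^\bullet\otimes\eta_\mu$ (which follows from $\Theta^\imath_0=1\otimes 1$ and Lemma~\ref{ad:lem:1}) shows $\ipsi(u)$ has the same defining property, hence $\ipsi(u)=u$ by the uniqueness in~(1). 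For~(3), the $\Qq$-linear independence and spanning come from a triangularity argument: under the composition $\Uidot\one_{\zeta_\imath}\xrightarrow{\sim}\Padot\one_\la$, the element $b_1\diamondsuit^\imath_{\zeta_\imath} b_2$ maps to $b_1 b_2^-\one_\la$ plus strictly smaller canonical basis terms of $\Padot$ (in the partial order inherited from $\B\times\B$), and since $\{b_1 b_2^-\one_\la\}$ is the canonical basis of $\Padot\one_\la$ this triangularity yields both properties simultaneously as $\la$ varies.

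The main obstacle I anticipate is the $\mA$-basis assertion in~(3). On one hand, one must show $b_1\diamondsuit^\imath_{\zeta_\imath} b_2\in {}_\mA\Uidot$, which should follow by combining Corollary~\ref{cor:tensor} (so that the $\imath$canonical basis sits inside ${}_\mA L(\la)\otimes {}_\mA L(\mu)$) with the characterization of ${}_\mA\Uidot$ in Definition~\ref{def:mAUidot}: the action of $u=b_1\diamondsuit^\imath_{\zeta_\imath} b_2$ on an arbitrary $\one_\la$ must preserve ${}_\mA\Udot$, which one checks by reducing via \eqref{eq:comp3} to the integrality of the image in $\Padot$. On the other hand, spanning over $\mA$ requires showing $\aAp\Uidot\subseteq \mA\text{-span}(\Bdot^\imath)$, which I would deduce by expressing each $\imath$divided power $B^{(n)}_{i,\zeta}\one_\zeta$ in the triangular basis via its action on tensor product modules and inverting the $\mA$-unitriangular change-of-basis matrix --- this in particular gives $\aAp\Uidot={}_\mA\Uidot$, reconciling Definitions~\ref{def:AUi} and~\ref{def:mAUidot} as promised.
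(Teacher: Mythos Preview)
Your approach is essentially identical to the paper's: its proof is literally the single line that the argument of \cite[Theorem~6.17]{BW18b} goes through once Proposition~\ref{prop:props} is in place, and your proposal spells out precisely that strategy. Two minor slips to clean up: $L^\imath(\lambda,\mu)\subset L(\lambda)\otimes L(\mu)$ (not $\dL(\lambda)\otimes L(\mu)$), and the $\ipsi$-invariance of $\eta_\lambda^\bullet\otimes\eta_\mu$ should be justified via $\ipsi=\Upsilon\circ\psi$ on this based $\U$-module (or directly from Proposition~\ref{prop:props}(1) with $b_1=b_2=1$) rather than via $\Theta^\imath$ and Lemma~\ref{ad:lem:1}.
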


\begin{proof}  
	The proof is the same as for \cite[Theorem 6.17]{BW18b} once Proposition~\ref{prop:props} is available.
\end{proof}

\begin{rem} 
Let us illustrate the dependence on parameters for small $\nu$ by a simple example. 
We consider the quantum symmetric pair of type AIV of rank one.
We have $\imath(B) = F + \vs EK^{-1}$ (with $\kappa =0$). Let us write $\vs = q^{-2} \overline{\vs} = q^{-1}\sum_{i \in \Z} a_i q^{i}$ with $a_i =a_{-i} \in \Z$. We have the contraction map
\begin{align*}
\pi: L(2) &\longrightarrow L(0) \\
\eta_2 \mapsto \eta_0,\quad F \eta_2 & \mapsto 0, \quad
F^{(2)} \eta_2 \mapsto  - \vs \eta_0.
\end{align*}

The $\imath$canonical basis of $L(2)$ is of the following form 
\[
\eta_2^\imath = \eta_2, \quad (F \eta_2)^\imath= F \eta_2, \quad  (F^{(2)} \eta_2)^\imath = F^{(2)} \eta_2 + (\sum_{i < 1} a_i q^{i-1} - \sum_{i > 1} a_i q^{1-i}) \eta_2.
\]
Therefore we see that 
\[
\pi((F^{(2)} \eta_2)^\imath) = - \sum_{i>1} a_i (q^{i} + q^{-i}).
\]
Hence the map $\pi: L(2) \longrightarrow L(0)$ is generally not a morphism of based $\Ui$-modules.
\end{rem}

Recall the linear map $p_{\imath,\lambda}:   \Uidot \one_{\overline{\lambda}} \longrightarrow  \Padot \one_{\lambda}$ from \eqref{eq:comp3}. 

\begin{cor}   \cite[Corollaries 6.19, 6.20]{BW18b}
   \label{cor:generator}
The $\mA$-algebra ${}_\mA \Uidot$ is generated by $1  \diamondsuit_{{\zeta_\imath}}^{\imath} F^{(a)}_{i}$ $(i \in \I)$ 
and $E^{(a)}_{j} \one_{{\zeta_\imath}}$ $(j \in \Iblack)$ for ${\zeta_\imath} \in X_\imath$ and $a \ge 0$.
Moreover, ${}_\mA \Uidot$ is a free $\mA$-module such that $\Uidot =\Qq \otimes_{\mA} {}_\mA \Uidot$. 

For $\lambda \in X$, the map $p_{\imath,\lambda}: {}_\mA \Uidot \one_{\overline{\lambda}} \longrightarrow {}_\mA\Padot \one_{\lambda}$ 
is an isomorphism of (free) $\mc A$-modules. 
\end{cor}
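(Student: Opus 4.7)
The second assertion---that ${}_\mA\Uidot$ is a free $\mA$-module with $\Uidot =\Qq \otimes_\mA {}_\mA\Uidot$---is immediate from Theorem~\ref{thm:iCBUi}(3), which exhibits $\Bdot^\imath$ simultaneously as a $\Qq$-basis of $\Uidot$ and as an $\mA$-basis of ${}_\mA\Uidot$.

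The remaining two assertions I plan to prove together, using the map $p_{\imath,\lambda}$ as the pivot. First I would verify that $p_{\imath,\lambda}$ sends ${}_\mA\Uidot \one_{\overline{\lambda}}$ into ${}_\mA\Padot \one_\lambda$ and identifies the two integrally. The idea is to analyze leading terms: by Theorem~\ref{thm:iCBUi}(1), the $\imath$canonical basis element $b_1\diamondsuit^\imath_{\overline{\lambda}} b_2$ acts on $\eta^\bullet_\la\otimes\eta_\mu$ (for $\la,\mu\gg 0$) as an $\imath$canonical basis element of $L^\imath(\la,\mu)$, whose leading term is $b_1 (b_2)^- \eta^\bullet_\la \otimes \eta_\mu$. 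Tracking this through the definition~\eqref{eq:comp3}, we obtain
\[
p_{\imath,\lambda}(b_1\diamondsuit^\imath_{\overline{\lambda}} b_2) \;\equiv\; b_1 (b_2)^- \one_\lambda \pmod{\text{lower-order terms}},
\]
with transition coefficients in $q^{-1}\Z[q^{-1}]$ by the $\imath$canonical basis characterization~\eqref{iCB}. This upper-unitriangular transition matrix together with the known $\Qq$-linear isomorphism from~\eqref{eq:comp3} forces $p_{\imath,\lambda}$ to restrict to an isomorphism of free $\mA$-modules ${}_\mA\Uidot \one_{\overline{\lambda}} \xrightarrow{\;\sim\;} {}_\mA\Padot \one_\lambda$, establishing the third assertion.

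For the generation statement, by Theorem~\ref{thm:iDP} the elements $B^{(a)}_{i,\zeta}$ ($i\in\I$) and $E^{(a)}_j \one_\zeta$ ($j\in\Iblack$) all lie in $\aAp \Uidot \subseteq {}_\mA\Uidot$, so it suffices to prove the reverse inclusion. Apply $p_{\imath,\lambda}$: by Theorem~\ref{thm:iDP}(2),
\[
p_{\imath,\lambda}(B^{(a)}_{i,\overline{\lambda}}) \;\equiv\; F^{(a)}_i \one_\lambda, \qquad p_{\imath,\lambda}(E^{(a)}_j \one_{\overline{\lambda}}) \;=\; E^{(a)}_j \one_\lambda,
\]
modulo lower-order terms inside ${}_\mA\Padot \one_\lambda$. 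Since the divided powers $F^{(a)}_i$ ($i\in\I$) and $E^{(a)}_j$ ($j\in\Iblack$) generate ${}_\mA\Padot$ as an $\mA$-algebra (a classical fact from \cite{Lu94}), a straightforward triangular induction gives $p_{\imath,\lambda}\bigl(\aAp\Uidot \one_{\overline{\lambda}}\bigr) = {}_\mA\Padot \one_\lambda$. Combined with the injectivity of $p_{\imath,\lambda}$ on integral forms just established, this yields $\aAp\Uidot \one_{\overline{\lambda}} = {}_\mA\Uidot \one_{\overline{\lambda}}$ for every $\la\in X$, and summing over idempotents gives $\aAp\Uidot = {}_\mA\Uidot$.

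The main obstacle I anticipate is the careful verification that the leading-term expansion of $p_{\imath,\lambda}(b_1\diamondsuit^\imath_{\overline{\lambda}} b_2)$ remains $\mA$-integral and unitriangular with respect to a suitable partial order on the canonical basis of ${}_\mA\Padot \one_\lambda$; this requires threading the $\imath$canonical basis characterization in Theorem~\ref{thm:iCBUi}(1) through the three-step composition defining $p_{\imath,\lambda}$, and matching the combinatorial indexing of $\Bdot^\imath$ with that of the classical canonical basis of $\Padot$. Once this bookkeeping is secured, the rest of the argument is formal and essentially follows the template of \cite[Corollaries~6.19, 6.20]{BW18b}, now available in the Kac-Moody setting thanks to the preceding sections.
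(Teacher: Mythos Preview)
Your overall strategy---freeness from Theorem~\ref{thm:iCBUi}(3), then a triangularity argument through $p_{\imath,\lambda}$---is the right one and matches the template of \cite[Corollaries~6.19, 6.20]{BW18b} to which the paper defers. However, there is a genuine mix-up in your generation argument: the statement asks you to show that ${}_\mA\Uidot$ is generated by the $\imath$\emph{canonical basis} elements $1\diamondsuit^\imath_{\zeta_\imath} F_i^{(a)}$, not by the $\imath$\emph{divided powers} $B^{(a)}_{i,\zeta}$. These are different elements in general (cf.\ the remark after Theorem~\ref{thm:iDP}). What you have written, with generators $B^{(a)}_{i,\zeta}$ and the conclusion $\aAp\Uidot={}_\mA\Uidot$, is precisely the content of Corollary~\ref{cor:AUi}, not the first assertion of Corollary~\ref{cor:generator}. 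In the paper's logic, Corollary~\ref{cor:AUi} is \emph{deduced from} Corollary~\ref{cor:generator}, so you have inverted the order.

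The fix is straightforward: run your surjectivity-plus-injectivity argument with the subalgebra generated by $1\diamondsuit^\imath_{\zeta_\imath} F_i^{(a)}$ and $E_j^{(a)}\one_{\zeta_\imath}$ in place of $\aAp\Uidot$. The required leading-term relation $p_{\imath,\lambda}(1\diamondsuit^\imath_{\overline{\lambda}} F_i^{(a)})\equiv F_i^{(a)}\one_\lambda$ is supplied by Proposition~\ref{prop:props}(1) with $b_1=1$, $b_2=F_i^{(a)}$, rather than by Theorem~\ref{thm:iDP}(2). Once you make this substitution, your argument goes through and agrees with the paper's approach.
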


Recall from Definition~\ref{def:AUi} the $\mA$-subalgebra $\aAp \Uidot \subset {}_\mA \Uidot$.

\begin{cor}\label{cor:AUi}
We have $\aAp \Uidot = {}_\mA \Uidot$; that is, the $\mA$-algebra ${}_\mA \Uidot$ is generated by the $\imath$divided powers $B^{(a)}_{i,\zeta_\imath}$ $(i \in \I)$ 
and $E^{(a)}_{j} \one_{{\zeta_\imath}}$ $(j \in \Iblack)$ for ${\zeta_\imath} \in X_\imath$ and $a \ge 0$.
\end{cor}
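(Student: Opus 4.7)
The inclusion $\aAp \Uidot \subseteq {}_\mA \Uidot$ is automatic from Definition~\ref{def:AUi} combined with Theorem~\ref{thm:iDP}, which asserts that every generator $B^{(n)}_{i,\zeta}$ of $\aAp \Uidot$ lies in ${}_\mA \Uidot$. For the converse, Corollary~\ref{cor:generator} reduces the task to showing that $1 \diamondsuit^\imath_{\zeta_\imath} F^{(a)}_i \in \aAp \Uidot$ for every $i \in \I$, $\zeta_\imath \in X_\imath$, and $a \ge 0$; the remaining generators $E^{(a)}_j \one_{\zeta_\imath}$ with $j \in \Iblack$ are in $\aAp \Uidot$ by definition, and when $i \in \Iblack$ the identity $B^{(a)}_{i,\zeta_\imath} = F^{(a)}_i \one_{\zeta_\imath}$ (from the remark right after Theorem~\ref{thm:iDP}) reduces matters to the classical divided power case. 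Thus the main work is to handle $i \in \Iwhite$.

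Fix $i \in \Iwhite$ and pick $\lambda \in X$ with $\overline{\lambda} = \zeta_\imath$. The essential tool is the $\mA$-module isomorphism
\[
p_{\imath, \lambda}: {}_\mA \Uidot \one_{\zeta_\imath} \longrightarrow {}_\mA \Padot \one_\lambda
\]
from Corollary~\ref{cor:generator}. By Theorem~\ref{thm:iDP}(2),
\[
B^{(a)}_{i, \zeta_\imath} \one_\lambda = F^{(a)}_i \one_\lambda + \sum_{a' < a} F^{(a')}_i \cdot {}_\mA \U^+ \one_\lambda,
\]
and since $p_{\imath, \lambda}$ kills the two-sided ideal generated by $\{E_k : k \in \Iwhite\}$, applying $p_{\imath, \lambda}$ yields
\[
p_{\imath, \lambda}\bigl(B^{(a)}_{i, \zeta_\imath}\bigr) = F^{(a)}_i \one_\lambda + \sum_{a' < a} F^{(a')}_i \cdot {}_\mA \U^+_{\Iblack} \one_\lambda.
\]
The defining property of $1 \diamondsuit^\imath_{\zeta_\imath} F^{(a)}_i$ from Theorem~\ref{thm:iCBUi}, transported along \eqref{eq:comp3}, shows that $p_{\imath, \lambda}\bigl(1 \diamondsuit^\imath_{\zeta_\imath} F^{(a)}_i\bigr)$ has the same leading term $F^{(a)}_i \one_\lambda$ in the natural PBW-type filtration on ${}_\mA \Padot \one_\lambda = {}_\mA \U_{\Iblack} \cdot {}_\mA \U^- \one_\lambda$, with lower-order correction again in $\sum_{a' < a} F^{(a')}_i \cdot {}_\mA \U_{\Iblack} \one_\lambda$.

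I then proceed by induction on $a$; the case $a = 0$ is trivial. For $a \ge 1$, the difference $u_a := 1 \diamondsuit^\imath_{\zeta_\imath} F^{(a)}_i - B^{(a)}_{i, \zeta_\imath}$ maps under $p_{\imath, \lambda}$ into $\sum_{a' < a} F^{(a')}_i \cdot {}_\mA \U_{\Iblack} \one_\lambda$. Via the inverse of $p_{\imath, \lambda}$, $u_a$ becomes an $\mA$-linear combination of canonical basis elements $b_1 \diamondsuit^\imath_{\zeta_\imath} b_2$ with $b_1 \in \B_{\Iblack}$ and $b_2 \in \B$ whose $F^{(\ast)}_i$-content has strictly smaller exponent than $a$. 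Each such $b_1 \diamondsuit^\imath_{\zeta_\imath} b_2$ lies in $\aAp \Uidot$: the factor $b_1$ is a product of the algebra generators $E^{(b)}_j \one_\nu$ and $F^{(b)}_j \one_\nu = B^{(b)}_{j, \nu}$ for $j \in \Iblack$, while $b_2$ is handled by the main inductive hypothesis (for its $F^{(< a)}_i$-pieces) together with a secondary induction on the complexity of $b_2$ over the remaining generators $F^{(\ast)}_{i'}$ with $i' \ne i$. Therefore $u_a \in \aAp \Uidot$ and consequently $1 \diamondsuit^\imath_{\zeta_\imath} F^{(a)}_i \in \aAp \Uidot$, completing the proof. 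The main obstacle is the careful choice of a filtration (and compatible secondary ordering) on ${}_\mA \Padot \one_\lambda$ ensuring that the leading-term matching in Paragraph~2 is genuine and that the multi-variable induction closes; this is essentially the same bookkeeping used in the proof of Corollary~\ref{cor:generator} in \cite{BW18b}, and Theorem~\ref{thm:iDP}(2) is crafted precisely to make it go through.
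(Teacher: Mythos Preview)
Your overall strategy—reducing to showing the generators $1\diamondsuit^\imath_{\zeta_\imath}F^{(a)}_i$ lie in $\aAp\Uidot$ and then inducting on $a$—is reasonable in spirit, but the induction as written does not close. The problem is in the last paragraph: you expand the difference $u_a$ in the $\imath$canonical basis and then assert that each $b_1\diamondsuit^\imath_{\zeta_\imath}b_2$ appearing already lies in $\aAp\Uidot$, justifying this by saying ``the factor $b_1$ is a product of the algebra generators \ldots\ while $b_2$ is handled by the main inductive hypothesis \ldots\ together with a secondary induction.'' But $b_1\diamondsuit^\imath_{\zeta_\imath}b_2$ is a single $\imath$canonical basis element defined through the asymptotic limit in Theorem~\ref{thm:iCBUi}; it is \emph{not} a product of something built from $b_1$ with something built from $b_2$, so you cannot peel off the factors separately. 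Moreover your inductive hypothesis only concerns elements of the special form $1\diamondsuit^\imath_{\zeta_\imath}F^{(a')}_i$ with $a'<a$, not arbitrary $b_1\diamondsuit^\imath_{\zeta_\imath}b_2$, and the ``secondary induction on the complexity of $b_2$'' is never defined. So as it stands you are assuming what you want to prove.

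The paper avoids this circularity entirely. Rather than working element by element in the $\imath$canonical basis, it simply observes that the restriction $p_{\imath,\lambda}\colon \aAp\Uidot\one_{\overline\lambda}\to{}_\mA\Padot\one_\lambda$ is surjective; since by Corollary~\ref{cor:generator} the map $p_{\imath,\lambda}$ is an $\mA$-isomorphism on all of ${}_\mA\Uidot\one_{\overline\lambda}$, surjectivity on the submodule $\aAp\Uidot\one_{\overline\lambda}$ forces equality. The surjectivity itself is exactly the filtration argument already carried out in Lemma~\ref{lem:surjA1}: one filters ${}_\mA\Padot\one_\lambda$ by the height of the $\U^-$ part, and Theorem~\ref{thm:iDP}(2) provides precisely the triangularity (leading term $F^{(n)}_i$ plus lower-height corrections) needed to climb the filtration by successive application of the $\imath$divided powers. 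This is the ``bookkeeping'' you allude to at the end, but the point is that it is done once at the level of $\Padot\one_\lambda$, not by chasing individual $\imath$canonical basis elements through an unspecified multi-index induction.
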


\begin{proof}
The equality follows from Corollary~\ref{cor:generator} and the surjectivity of the restriction $ p_{\imath,\lambda}: \aAp \Uidot \one_{\overline{\lambda}} \longrightarrow {}_\mA\Padot \one_{\lambda}$. The rephrase follows by Definition~\ref{def:AUi} of $\aAp \Uidot$. 
\end{proof}

\subsection{The bilinear form}
Following \cite[Definition 6.15]{BW18b}, we define a symmetric bilinear form $(\cdot , \cdot): \Uidot \times \Uidot \rightarrow \Q(q)$ via a limit of the corresponding bilinear forms (defined using the anti-involution $\wp$) on the projective system of $\Ui$-modules $L^\imath (\la, \mu)$. 

The next theorem generalizes \cite[Theorem 6.27]{BW18b} to $\Uidot$ associated with more general parameters $\vs_i$ and to the Kac-Moody setting.
\begin{thm}  
  \label{thm:orth}
	The $\imath$canonical basis $\Bdot^\imath$ of $\Uidot$ is almost orthonormal in the following sense: 
	for ${\zeta_\imath}, \zeta_\imath' \in X_\imath$ and $(b_1, b_2), (b'_1, b'_2) \in \B_{\Iblack} \times \B$, we have 
\[
(b_1 \diamondsuit^\imath_{\zeta_\imath} b_2, b'_1 \diamondsuit^\imath_{\zeta_\imath'} b'_2) \equiv \delta_{{\zeta_\imath}, \zeta_\imath'} \delta_{b_1, b'_1} \delta_{b_2, b'_2}, \quad \text{ mod } q^{-1} {\bf A}.
\]
In particular, the bilinear form $(\cdot , \cdot)$ on $\Uidot$ is non-degenerate. 
\end{thm}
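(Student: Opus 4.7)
The plan is to reduce the almost orthonormality of $\Bdot^\imath$ on $\Uidot$ to the almost orthonormality of the $\imath$canonical basis on each based $\Ui$-module $L^\imath(\la,\mu)$, and then invoke Theorem~\ref{thm:iCBUi}(1) together with the asymptotic stability in Proposition~\ref{prop:props}(3) to pass to the limit. Since the bilinear form on $\Uidot$ is, by definition, the limit of the forms on the projective system $\{L^\imath(\la+\nu^\tau,\mu+\nu)\}$ built from the anti-involution $\wp$ on $\U$ (which acts on any $\U$-module regardless of whether it preserves $\Ui$), the entire argument can be carried out on the module side.

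First I would establish almost orthonormality on $L^\imath(\la,\mu)$. The anti-involution $\wp$ of $\U$ induces a contravariant bilinear form on each $L(\nu)$ for $\nu\in X^+$, with respect to which the canonical basis $\{b^-\eta_\nu\}_{b\in\B(\nu)}$ is almost orthonormal, by the standard Kac--Moody argument (cf.\ \cite[\S19.1.2]{Lu94} and the Kac--Moody extension in \cite{BW16}). Taking the tensor product form on $L(\la)\otimes L(\mu)$ and restricting to the based $\U$-submodule $L^\imath(\la,\mu)=L(w_\bullet\la,\mu)$ from Theorem~\ref{thm:based-Li}, the usual argument using the quasi-$\mc R$-matrix shows that the canonical basis $\{b_1\diamondsuit_\zeta b_2\,(\eta^\bullet_\la\otimes\eta_\mu)\}$ of $L^\imath(\la,\mu)$ is almost orthonormal modulo $q^{-1}\mathbf A$. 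Combining this with the unitriangular change of basis
\[
(b_1\diamondsuit_{\zeta_\imath}b_2)^\imath_{w_\bullet\la,\mu} \in (b_1\diamondsuit_\zeta b_2)(\eta^\bullet_\la\otimes\eta_\mu) + \sum q^{-1}\Z[q^{-1}]\,(b'_1\diamondsuit_\zeta b'_2)(\eta^\bullet_\la\otimes\eta_\mu)
\]
from Proposition~\ref{prop:props}(1), I obtain the almost orthonormality of the $\imath$canonical basis of $L^\imath(\la,\mu)$.

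Next I would transfer this to $\Uidot$. For any fixed pair $({\zeta_\imath},\zeta_\imath')$ of weights and basis elements, Theorem~\ref{thm:iCBUi}(1) identifies, for $\la,\mu\gg 0$ with $\overline{\wb\la+\mu}={\zeta_\imath}$,
\[
(b_1\diamondsuit^\imath_{\zeta_\imath}b_2)(\eta^\bullet_\la\otimes\eta_\mu) = (b_1\diamondsuit_{\zeta_\imath}b_2)^\imath_{w_\bullet\la,\mu},
\]
and similarly for the primed basis element (here one notes that a nonzero pairing forces ${\zeta_\imath}=\zeta_\imath'$, so the two cyclic vectors live in the same $L^\imath(\la,\mu)$). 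The definition of $(\cdot,\cdot)$ on $\Uidot$ as a limit then identifies the pairing on $\Uidot$ with the module-level pairing for sufficiently large $\la,\mu$. The asymptotic stability in Proposition~\ref{prop:props}(3) ensures that these pairings stabilize modulo $q^{-1}\mathbf A$ as $\la,\mu$ grow, so that the limit exists and yields exactly the claimed value $\delta_{{\zeta_\imath},\zeta_\imath'}\delta_{b_1,b_1'}\delta_{b_2,b_2'}\pmod{q^{-1}\mathbf A}$. Nondegeneracy is then immediate: the Gram matrix of $\Bdot^\imath$, in any finite block of fixed weight, is congruent to the identity modulo $q^{-1}\mathbf A$ and is therefore invertible over $\mathbf A$, hence over $\Q(q)$.

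The main obstacle is the well-definedness of the limit in the absence of a direct $\Ui$-invariance of $\wp$ (since under the general parameters in \eqref{parameters} we no longer assume $\wp(\Ui)=\Ui$). What saves us is that $\wp$ is still an anti-involution of $\U$, so the Shapovalov forms on $L(\la)$ and $L(\mu)$, and hence the tensor form on $L(\la)\otimes L(\mu)$, exist unconditionally; the projective system from Proposition~\ref{prop:props}(2) connects these forms in a way compatible with the $\imath$canonical basis thanks to Proposition~\ref{prop:props}(3). This is precisely the conceptual simplification over \cite[\S6]{BW18b} noted in the introduction: one never needs $\wp$ to preserve $\Ui$, only the weaker fact that $\wp$ acts on $\U$-modules regarded as $\Ui$-modules by restriction.
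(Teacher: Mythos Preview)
Your proposal is correct and takes essentially the same approach as the paper: the paper itself gives no independent argument but simply asserts that the proof of \cite[Theorem~6.27]{BW18b} carries over verbatim, together with the remark (matching your final paragraph) that the form on $\Uidot$ is built from the $\wp$-contravariant forms on the $\U$-modules $L^\imath(\la,\mu)$, so one only needs $\wp$ to act on these $\U$-modules and never needs $\wp$ to restrict to $\Ui$. Your outline---almost orthonormality of the canonical basis on $L^\imath(\la,\mu)$, unitriangular passage to the $\imath$canonical basis via Proposition~\ref{prop:props}(1), and passage to the limit via Theorem~\ref{thm:iCBUi}(1) and Proposition~\ref{prop:props}(3)---is precisely the strategy of \cite[\S6.6]{BW18b}.
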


\begin{rem}
A noticeable feature in the definition of the bilinear forms on $\Udot$, or on $\Uidot$ as in \cite[\S6.6]{BW18b} (where it was assumed that $\vs_i \in \pm q^\Z$), is the adjunction $\wp$. 

For $\vs_i$ in \eqref{parameters} satisfying the general relation \eqref{vs2} in general, the $\U$-automorphism $\wp$ does {\em not} restrict to an automorphism of $\Ui$, or $\Uidot$. However, since $L^\imath (\la, \mu)$ is a $\U$-module, the $\wp$-twisted action of $\Ui$ is always well defined. Hence the definition of the bilinear form $(\cdot , \cdot)$ on $\Uidot$ based on the projective system of $\Ui$-modules $L^\imath (\la, \mu)$ remains valid, and the proof of  \cite[Theorem 6.27]{BW18b} can be carried here.

While the adjunction induced by $\wp$ no longer holds in general, if the anti-involution $\wp$ preserves the algebra $\Ui$ as in Proposition~\ref{prop:rho} (3), we shall still have the induced adjunction.
\end{rem}


\end{document}